%% file: main.tex
\documentclass[lettersize,journal]{IEEEtran}
\input{my_macros.tex}

\begin{document}

\title{A DC Composite Optimization \\via Variable Smoothing for Robust Phase Retrieval\\ with Nonconvex Loss Functions}
\author{Kumataro Yazawa, Keita Kume \IEEEmembership{Member, IEEE}, Isao Yamada \IEEEmembership{Fellow, IEEE}
\thanks{
  K.~Yazawa, K.~Kume and I.~Yamada are with
  the Department of Information and Communications Engineering,
  Institute of Science Tokyo,
  2-12-1-S3-60, O-okayama,
  Meguro-ku, Tokyo 152-8550, Japan
  (e-mail: \{yazawa, kume, isao\}@sp.ict.e.titech.ac.jp).
  This work was partially supported by JSPS Grants-in-Aid (19H04134, 24K23885).
  A preliminary short version of this paper was presented in \cite{yazawa2025variable} as a conference paper.
  Compared to \cite{yazawa2025variable}, this paper includes complete proofs of the mathematical results and more illustrative experimental results.
  This work has been submitted to the IEEE for possible publication. Copyright may be transferred without notice, after which this version may no longer be accessible.}
}

\date{}

\maketitle

\begin{abstract}
    In this paper, we propose an optimization-based method for robust phase retrieval problem where the goal is to estimate an unknown signal from a quadratic measurement corrupted by outliers.
    To enhance the robustness of existing optimization models with the $\ell_1$ loss function, we propose a generalized model that can handle DC (Difference-of-Convex) loss functions beyond the $\ell_1$ loss.
    We view the cost function of the proposed model as a composition of a DC function with a smooth mapping, and develop a variable smoothing algorithm for minimizing such DC composite functions.
    At each step of our algorithm, we generate a smooth surrogate function by using the Moreau envelope of each (weakly) convex function in the DC function, and then perform the gradient descent update of the surrogate function.
    Unlike many existing algorithms for DC problems, the proposed algorithm does not require any inner loop.
    We also present a convergence analysis in terms of a DC composite critical point for the proposed algorithm.
    Our numerical experiment demonstrates that the proposed method with DC loss functions is more robust against outliers compared to existing methods with the $\ell_1$ loss.
\end{abstract}

\begin{IEEEkeywords}
    Robust phase retrieval, nonsmooth optimization, DC composite, Moreau envelope, variable smoothing
\end{IEEEkeywords}

\input{footnote.tex}

\input{introduction.tex}
\input{preliminary.tex}
\input{algorithm.tex}
\input{experiment.tex}

\input{conclusion.tex}
\bibliographystyle{IEEEtran}
\bibliography{IEEEabrv,main}
\input{appendix.tex}

\end{document}


\title{Supplementary Material for \\ ``A DC Composite Optimization\\ via Variable Smoothing for Robust Phase Retrieval with Nonconvex Loss Functions''}
\author{Kumataro Yazawa, Keita Kume \IEEEmembership{Member, IEEE}, Isao Yamada \IEEEmembership{Fellow, IEEE}}

\date{}

\maketitle

As described in \cref{sec:experiment}, we conducted the experiments by using not only Cauchy outlier but also uniformly distributed outliers.
In this material, we present the results for the uniformly distributed outliers for completeness.

\cref{fig:results_d_100_uniform,fig:results_d_500_uniform,fig:different scales of uniform outliers,tab:execution time for uniform outlier}
below correspond to \cref{fig:results_d_100_Cauchy,fig:results_d_500_Cauchy,fig:different scales of outliers,tab:execution time} in \cref{sec:experiment}, respectively.
Despite the change in the distribution of $\xi_i$, each figure exhibits similar trends to those in the corresponding figure.
Specifically, (i) \cref{fig:results_d_100_uniform,fig:results_d_500_uniform} show the robustness of the proposed method with the capped $\ell_1$ and the trimmed $\ell_1$,
(ii) \cref{fig:different scales of uniform outliers} illustrates that the appropriate values of $\beta$ and $K$ depend on the scale and number of outliers, respectively,
and (iii) \cref{tab:execution time for uniform outlier} demonstrates that the proposed method achieves lower computational time than the existing method.

\begin{figure*}[h]
\centering
\begin{subfigure}{0.23\textwidth}
\includegraphics[width=\linewidth]{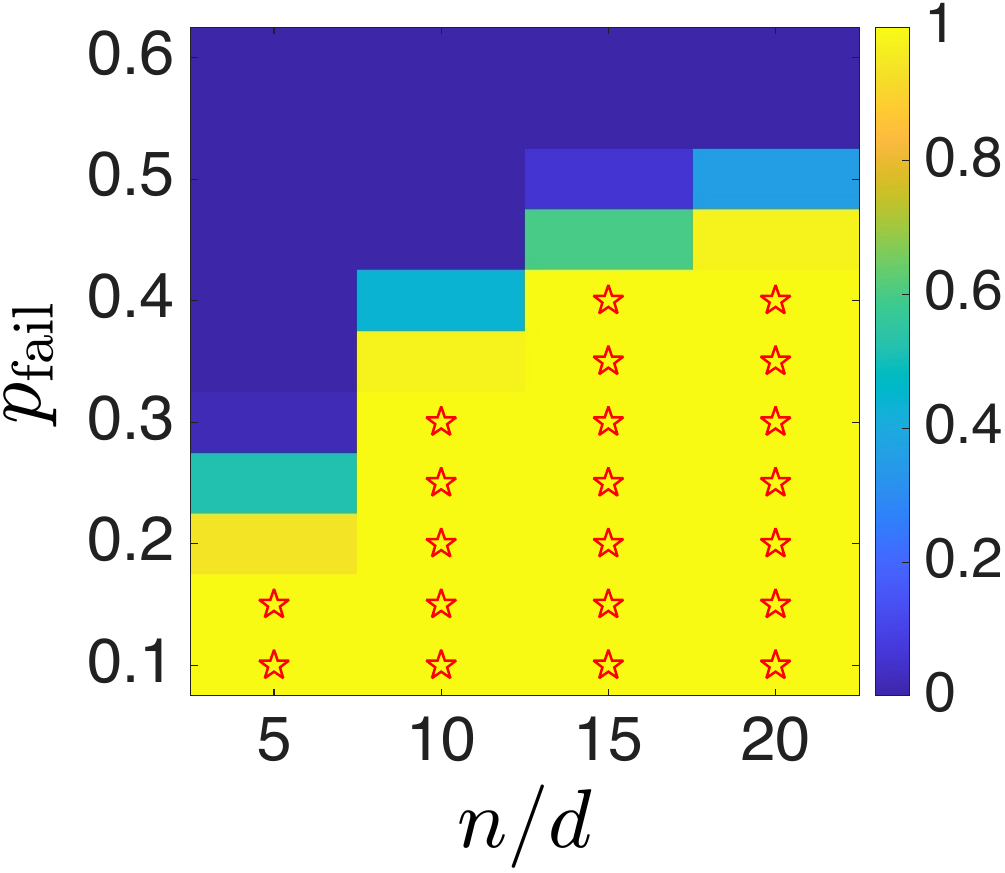}
\caption{$\ell_1$ by IPL}
\end{subfigure}
\begin{subfigure}{0.23\textwidth}
\includegraphics[width=\linewidth]{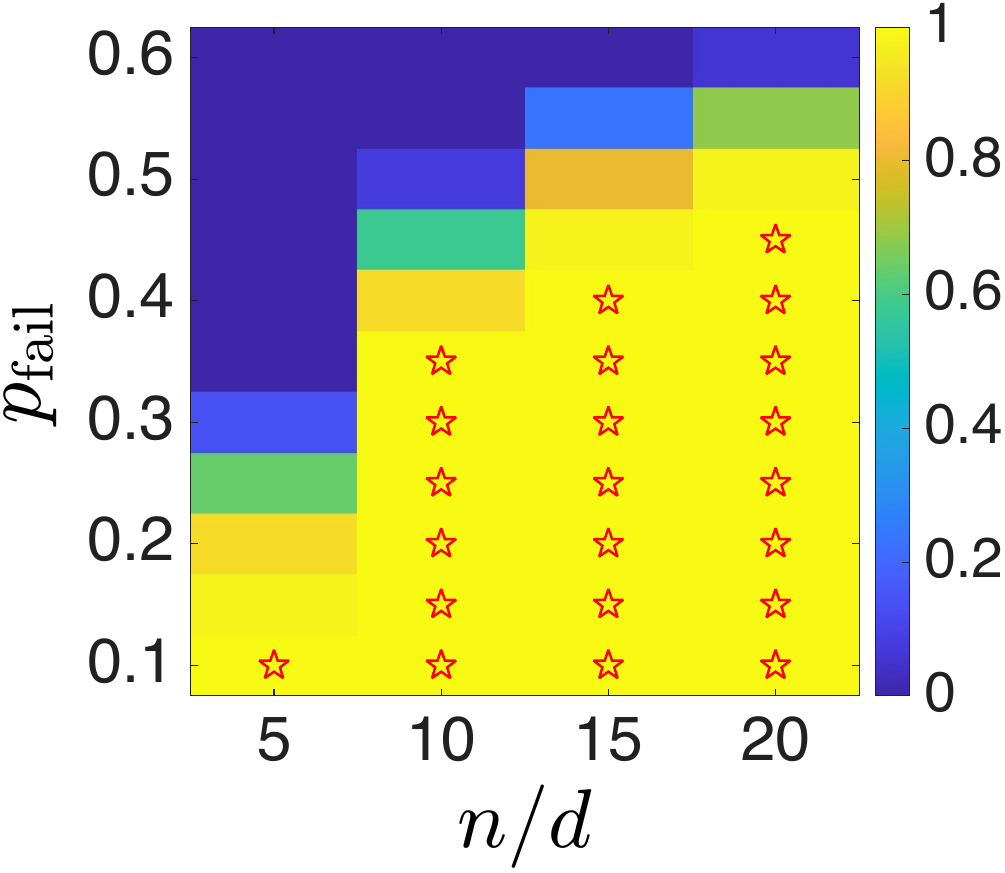}
\caption{Capped $\ell_1$ ($\beta=100$)}
\end{subfigure}
\begin{subfigure}{0.23\textwidth}
\includegraphics[width=\linewidth]{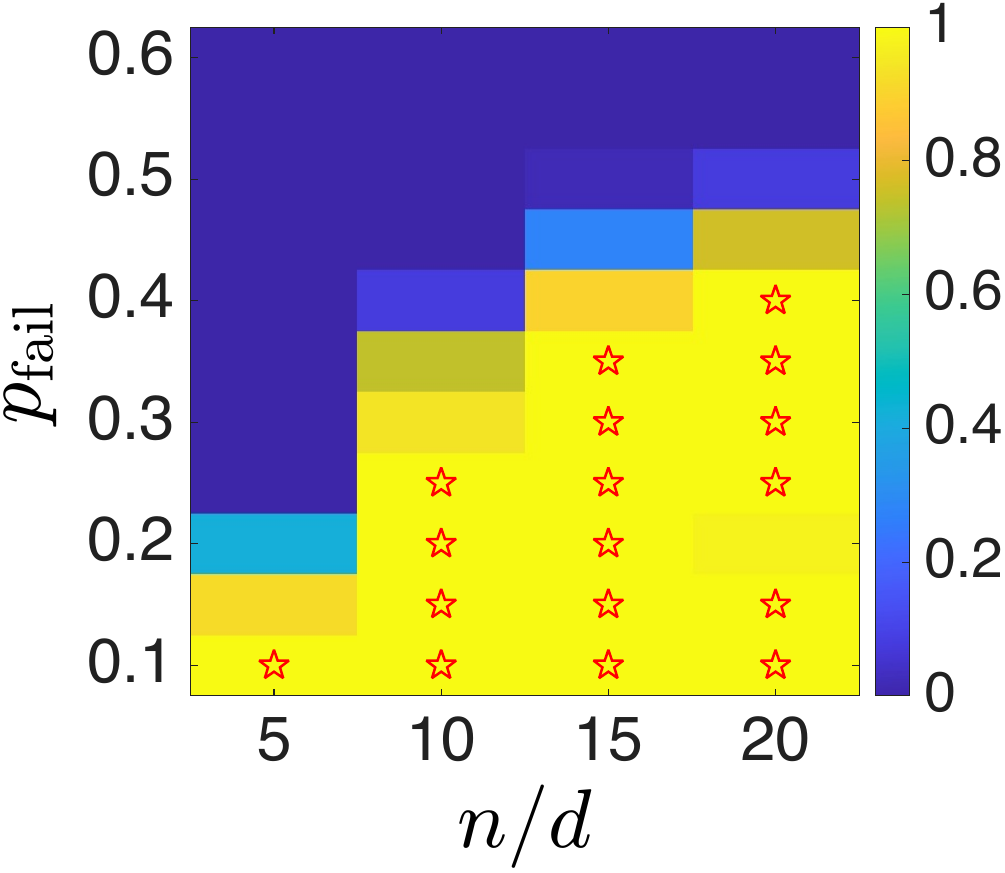}
\caption{Capped $\ell_1$ ($\beta=1000$)}
\end{subfigure}
\begin{subfigure}{0.23\textwidth}
\includegraphics[width=\linewidth]{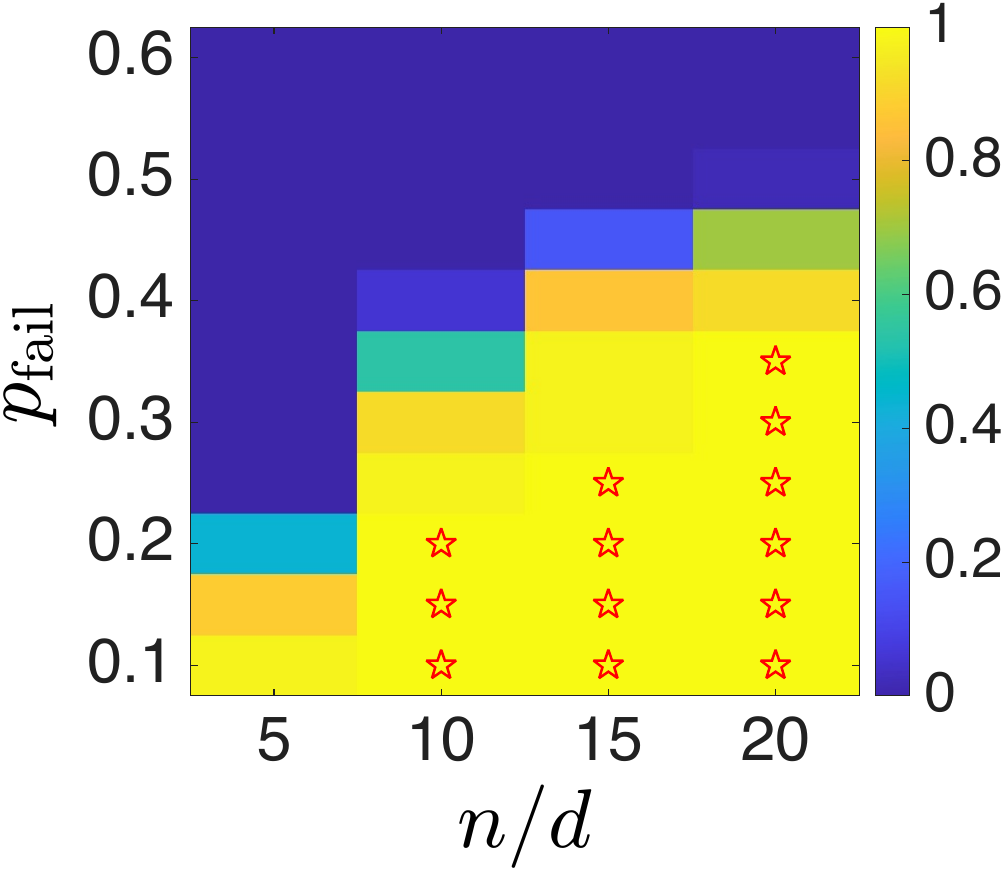}
\caption{Capped $\ell_1$ ($\beta=10000$)}
\end{subfigure}

\begin{subfigure}{0.23\textwidth}
\includegraphics[width=\linewidth]{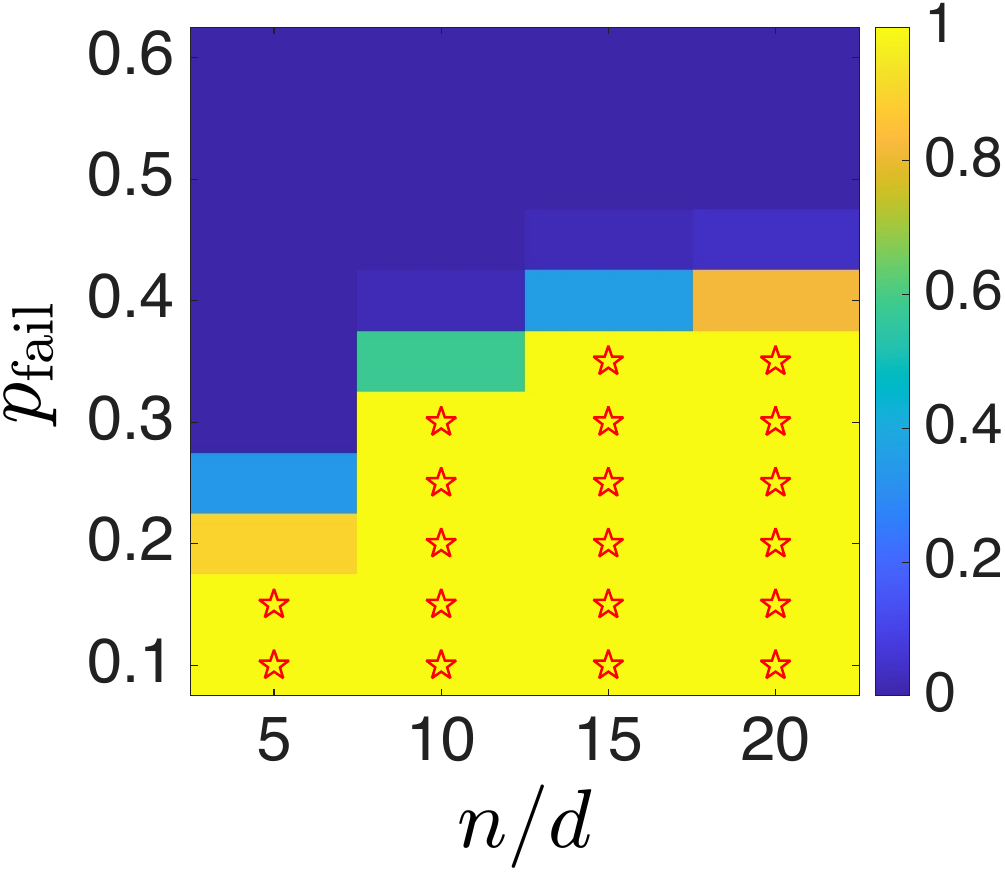}
\caption{$\ell_1$ by Robust-AM}
\end{subfigure} 
\begin{subfigure}{0.23\textwidth}
\includegraphics[width=\linewidth]{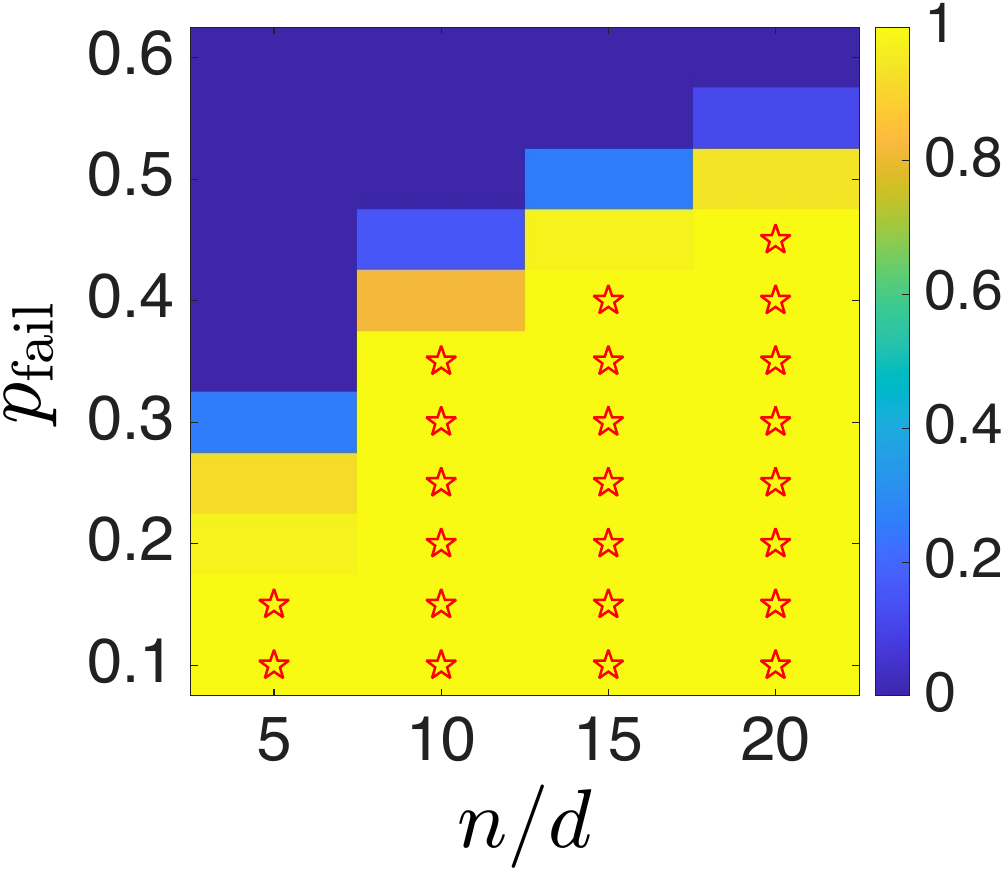}
\caption{Trimmed $\ell_1$ ($K/n=0.2$)}
\end{subfigure}
\begin{subfigure}{0.23\textwidth}
\includegraphics[width=\linewidth]{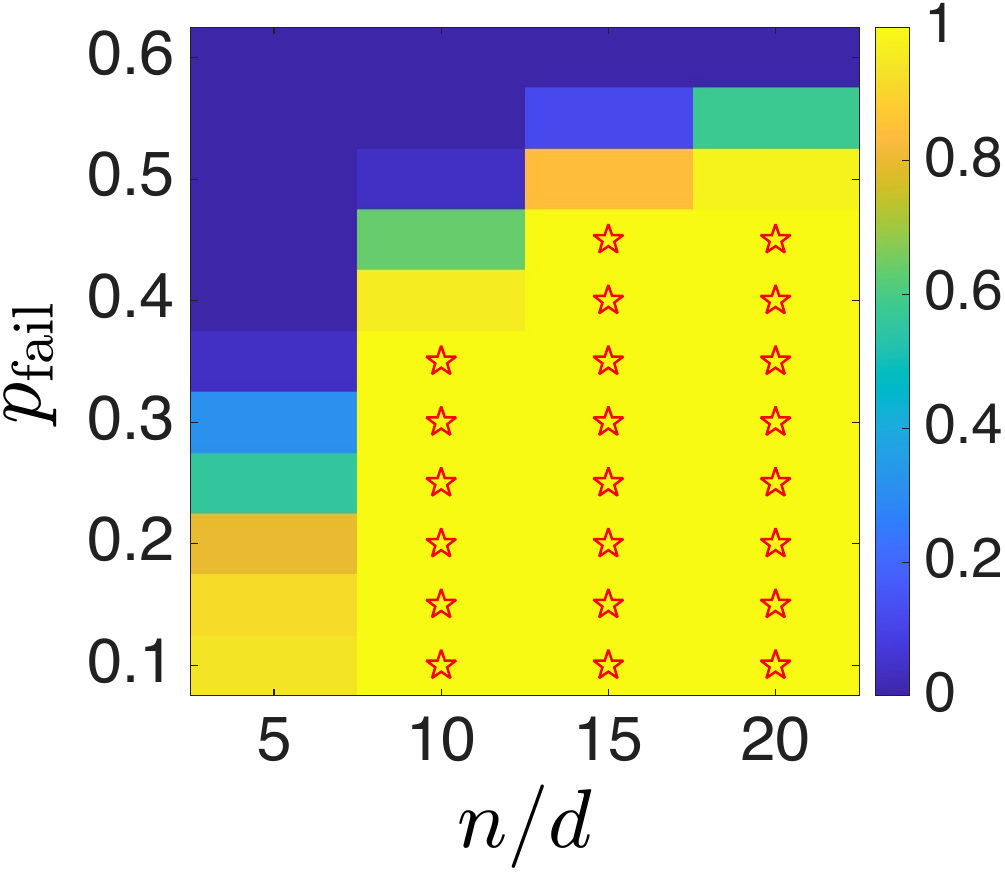}
\caption{Trimmed $\ell_1$ ($K/n=0.3$)}
\end{subfigure}
\begin{subfigure}{0.23\textwidth}
\includegraphics[width=\linewidth]{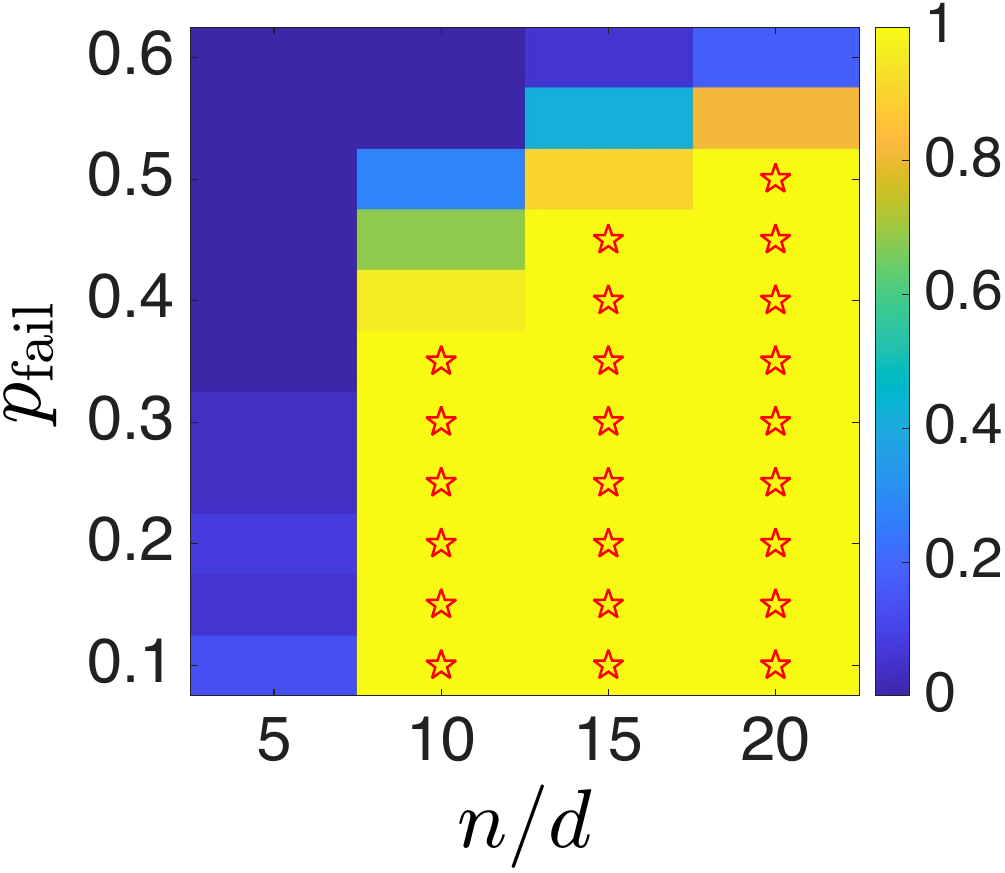}
\caption{Trimmed $\ell_1$ ($K/n=0.4$)}
\end{subfigure}

\caption{Success rate of each method.
We used uniformly distributed $\xi_i$ and the same parameters $(d,s)=(100,1)$ as in \cref{fig:results_d_100_Cauchy}.
}
\label{fig:results_d_100_uniform}
\end{figure*}

\begin{figure*}[h]
\centering
\begin{subfigure}{0.23\textwidth}
\includegraphics[width=\linewidth]{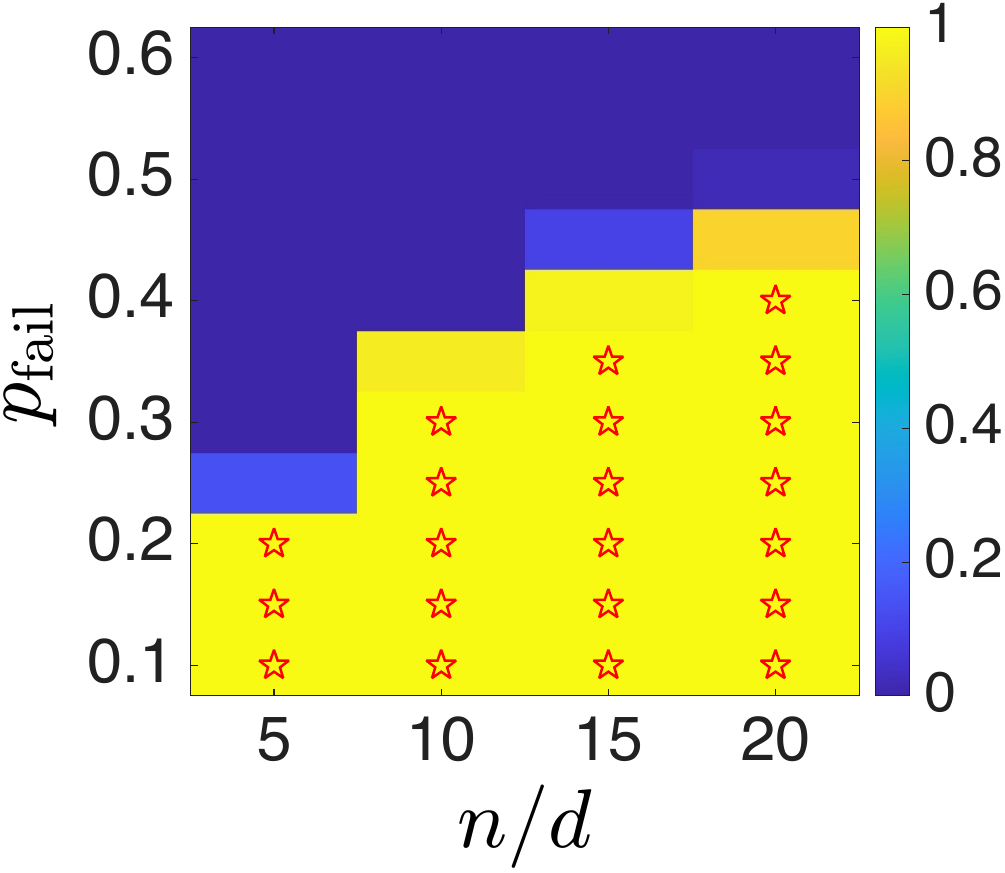}
\caption{$\ell_1$ by IPL}
\end{subfigure}
\begin{subfigure}{0.23\textwidth}
\includegraphics[width=\linewidth]{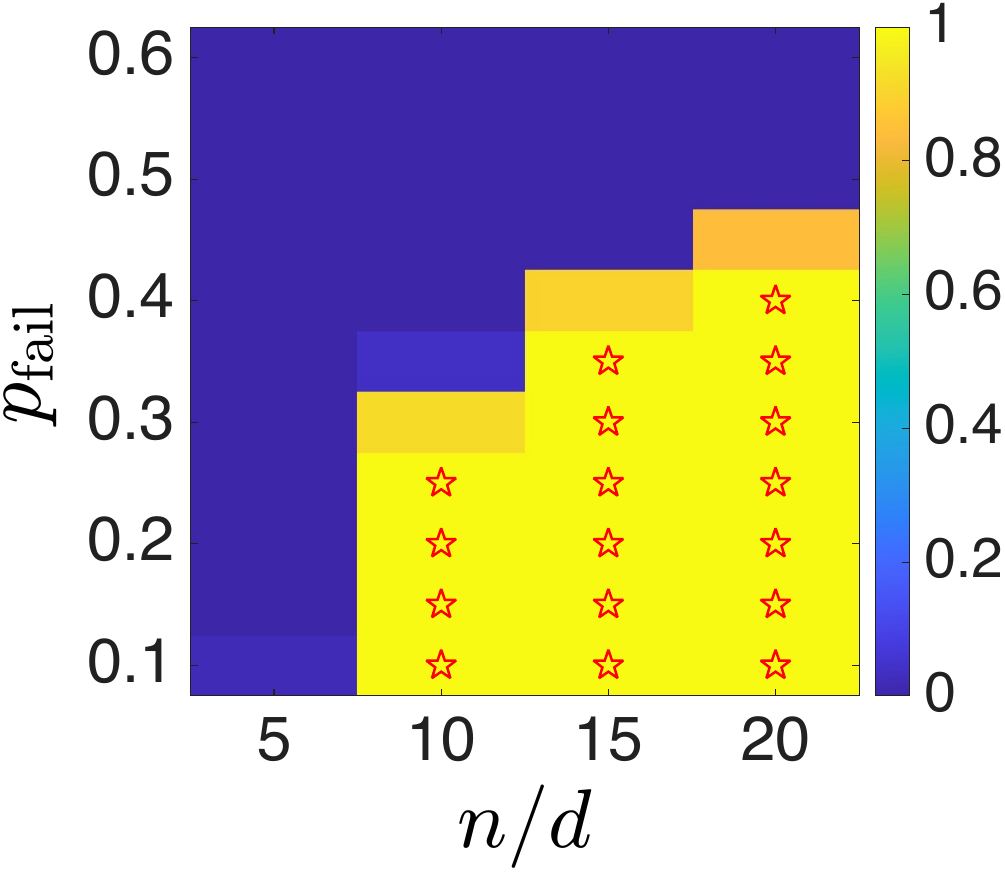}
\caption{Capped $\ell_1$ ($\beta=100$)}
\end{subfigure}
\begin{subfigure}{0.23\textwidth}
\includegraphics[width=\linewidth]{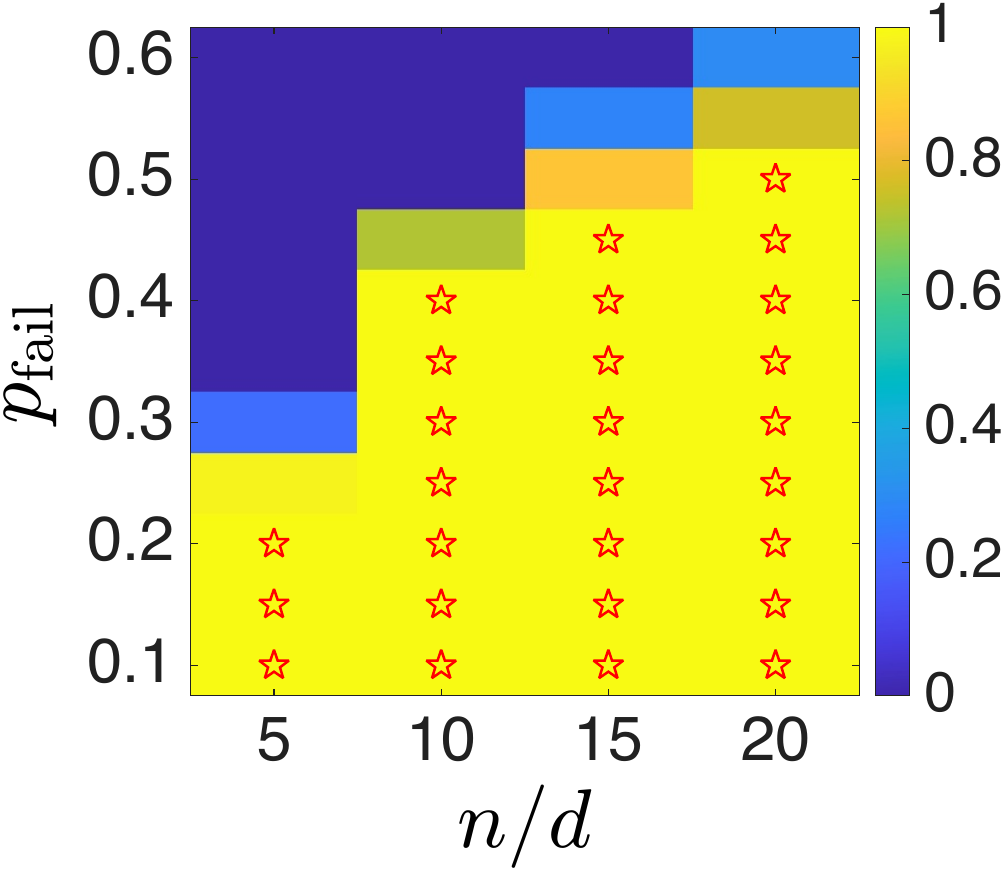}
\caption{Capped $\ell_1$ ($\beta=1000$)}
\end{subfigure}
\begin{subfigure}{0.23\textwidth}
\includegraphics[width=\linewidth]{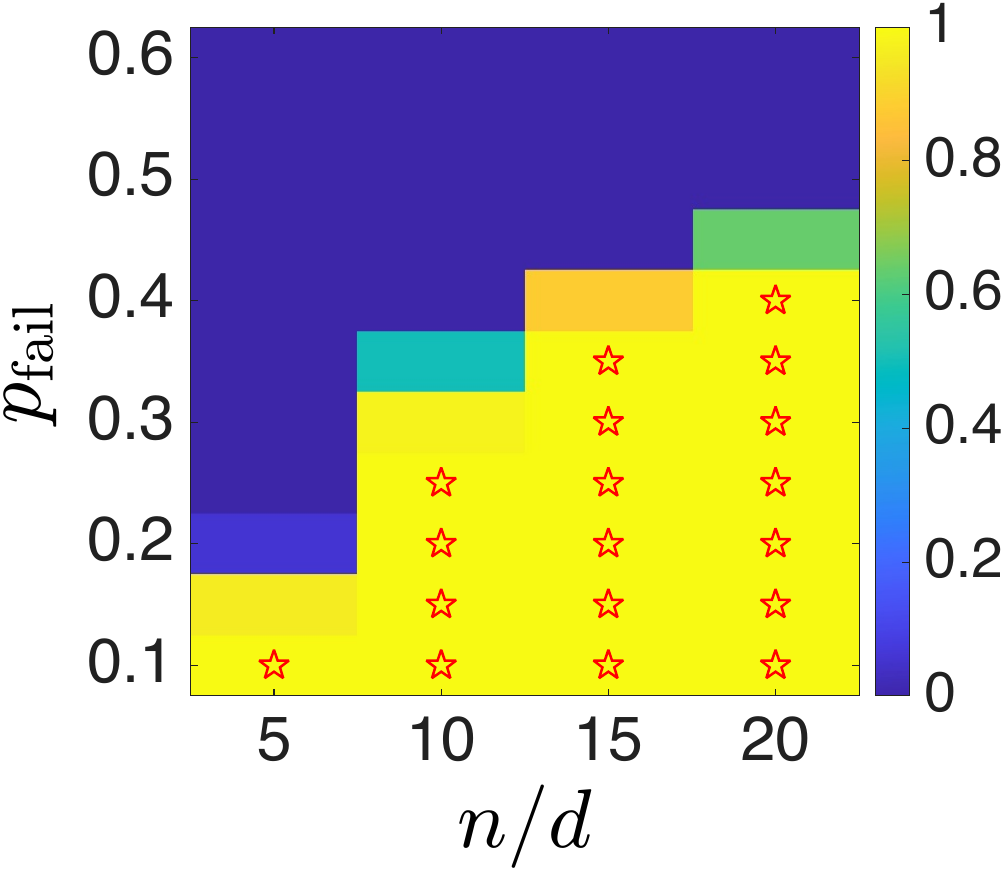}
\caption{Capped $\ell_1$ ($\beta=10000$)}
\end{subfigure}

\hspace{0.23\textwidth}
\begin{subfigure}{0.23\textwidth}
\includegraphics[width=\linewidth]{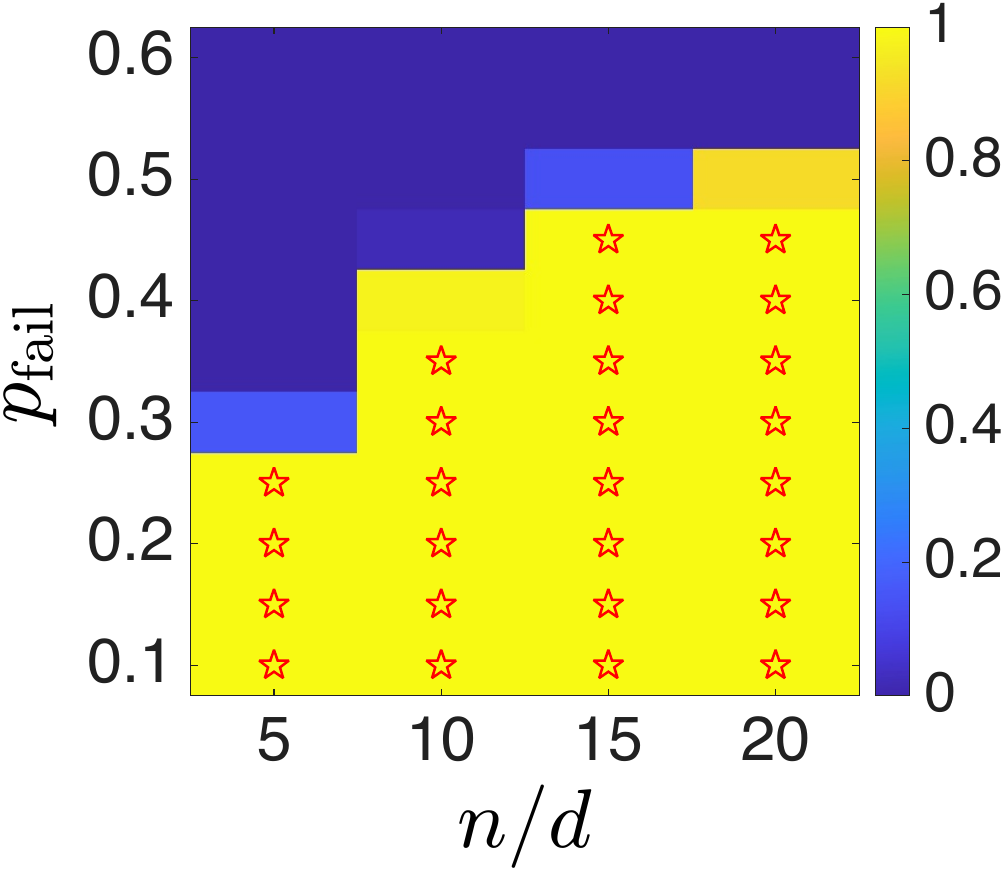}
\caption{Trimmed $\ell_1$ ($K/n=0.2$)}
\end{subfigure}
\begin{subfigure}{0.23\textwidth}
\includegraphics[width=\linewidth]{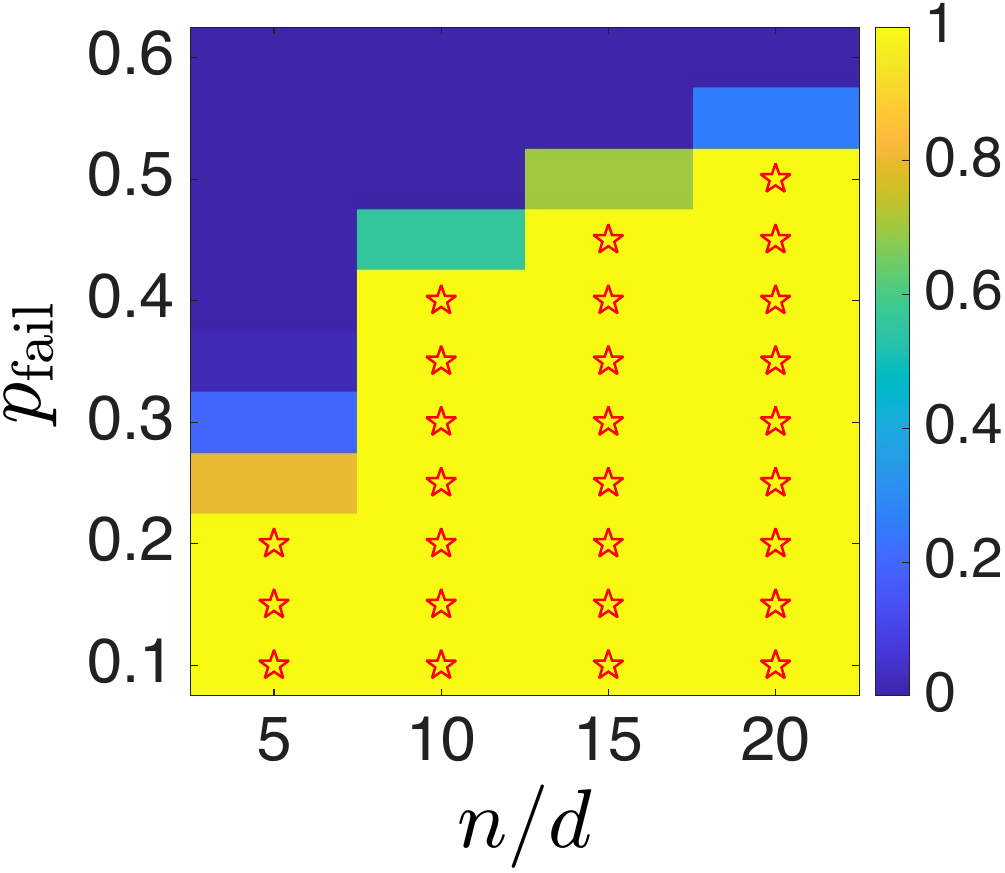}
\caption{Trimmed $\ell_1$ ($K/n=0.3$)}
\end{subfigure}
\begin{subfigure}{0.23\textwidth}
\includegraphics[width=\linewidth]{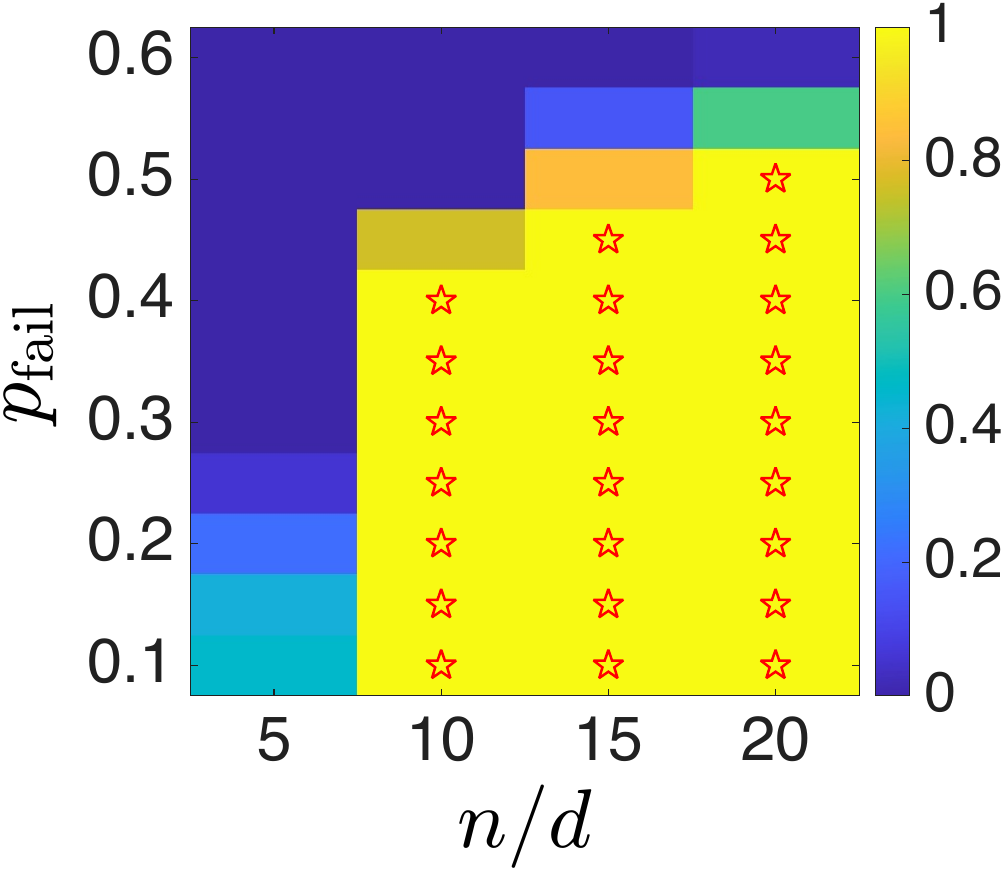}
\caption{Trimmed $\ell_1$ ($K/n=0.4$)}
\end{subfigure}
\caption{Same as \cref{fig:results_d_100_uniform} except that $d=500$. We omitted the result of Robust-AM because the subproblem solver (ADMM-LAD) did not reach its stopping criterion within 30 seconds even when $p_{\text{fail}}=0.1$.}
\label{fig:results_d_500_uniform}
\end{figure*}

\begin{table*}[t]
    \centering
    \caption{The running CPU time in seconds (outside the parentheses) and the the number of iterations (inside the parentheses) for the case where $\xi_i$ is uniformly distributed and $(d,p_{\text{fail}},s)=(500,0.35,1)$.}
    \begin{tabular}{|l|c|c|c|c|}
        \hline
        $n/d$ & 5 & 10 & 15 & 20 \\
        \hline
        \hline
        $\ell_1$ by IPL & 30.00 (251.66) & 7.14 (24.74) & 7.95 (7.44) &  10.13 (6.00)  \\
        Capped $\ell_1$ $(\beta = 100)$ & 0.62 (258.26) & 1.39 (313.78) & 1.47 (242.10) & 1.60 (213.88)\\
        Capped $\ell_1$ $(\beta = 1000)$ & 0.37 (150.86) & 0.56 (118.94) & 0.55 (82.28) & 0.53 (63.14)\\
        Capped $\ell_1$ $(\beta = 10000)$ & 0.39 (158.62) & 0.71 (154.98) & 0.42 (59.80) & 0.45 (51.16)\\
        Trimmed $\ell_1$ $(K/n = 0.2)$ & 1.44 (464.40) & 0.82 (129.86) & 0.93 (102.00) & 1.03 (89.76) \\
        Trimmed $\ell_1$ $(K/n = 0.3)$ & 2.15 (698.48) & 5.38 (969.08) & 6.27 (781.86) & 5.89 (595.94)\\
        Trimmed $\ell_1$ $(K/n = 0.4)$ & 1.87 (601.60) & 1.86 (53.18) & 1.82 (44.98) & 4.20 (44.98)\\
        \hline
    \end{tabular}
    \label{tab:execution time for uniform outlier}
\end{table*}
\clearpage
\begin{figure}[h]
\centering
\includegraphics[width=\columnwidth]{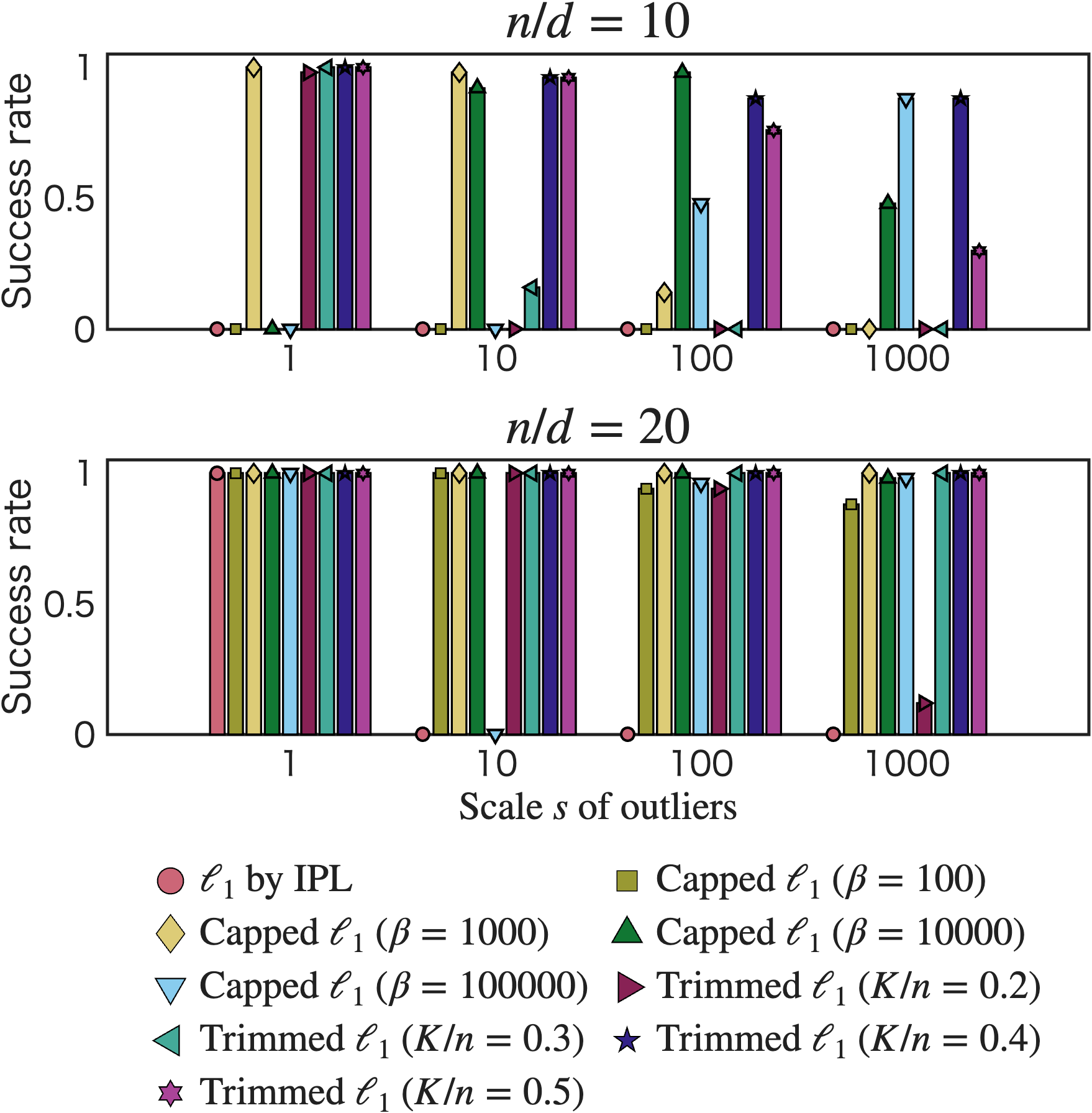}
\caption{Success rate for different values of $s$. 
We employed uniformly distributed $\xi_i$ and the same parameters $(d,p_{\text{fail}})=(500,0.4)$ as in \cref{fig:different scales of outliers}}
\label{fig:different scales of uniform outliers}
\end{figure}

%% file: my_macros.tex
\usepackage{cite}
\usepackage{amsmath}
\usepackage{amssymb}
\usepackage{bm}
\usepackage{amsthm}
\usepackage{mathtools}
\usepackage{algorithm}
\usepackage{algpseudocode}
\usepackage{xcolor}
\usepackage{graphicx}
\usepackage{subcaption}
\usepackage{array}
\usepackage{threeparttable}
\usepackage{dsfont}
\usepackage[hidelinks]{hyperref}
\usepackage{xurl}
\usepackage{cleveref}
\usepackage{autonum}
\usepackage{tikz}
\usetikzlibrary{intersections}
\usepackage{dblfloatfix}
\theoremstyle{definition}
\newtheorem{dfn}{Definition}[section]
\newtheorem{proposition}[dfn]{Proposition}
\newtheorem{theorem}[dfn]{Theorem}

\newtheorem{lemma}[dfn]{Lemma}
\newtheorem{fac}[dfn]{Fact}
\newtheorem{remark}[dfn]{Remark}
\newtheorem{assumption}[dfn]{Assumption}
\newtheorem{problem}[dfn]{Problem}
\newtheorem{example}[dfn]{Example}
\newtheorem*{notation}{Notation}
\newtheorem*{relatedwork}{Related works}

\crefname{theorem}{Theorem}{Theorems}
\crefname{proposition}{Proposition}{Propositions}
\crefname{dfn}{Definition}{Definitions}
\crefname{lemma}{Lemma}{Lemmas}
\crefname{fac}{Fact}{Facts}
\crefname{remark}{Remark}{Remarks}
\crefname{assumption}{Assumption}{Assumptions}
\crefname{problem}{Problem}{Problems}
\crefname{example}{Example}{Examples}

\crefname{equation}{}{}
\crefname{algorithm}{Algorithm}{Algorithms}
\crefname{figure}{Fig.}{Fig.}
\crefname{table}{Table}{Tables}
\crefname{chapter}{Chapter}{Chapters}
\crefname{section}{Section}{Sections}

\renewcommand{\labelenumi}{(\theenumi)}

\DeclarePairedDelimiter{\abs}{\lvert}{\rvert} %
\DeclarePairedDelimiter{\norm}{\lVert}{\rVert} %
\DeclarePairedDelimiter{\rbra}{\lparen}{\rparen} %
\DeclarePairedDelimiter{\cbra}{\lbrace}{\rbrace} %
\DeclarePairedDelimiter{\abra}{\langle}{\rangle} %
\DeclarePairedDelimiterX{\Set}[2]{\lbrace}{\rbrace}{#1\,\delimsize\vert\,#2}

\newcommand{\R}{\mathbb{R}}
\newcommand{\N}{\mathbb{N}}
\newcommand{\x}{\bm{x}}
\newcommand{\y}{\bm{y}}
\newcommand{\z}{\bm{z}}
\newcommand{\ba}{\bm{a}}
\newcommand{\bb}{\bm{b}}
\newcommand{\btilde}[1]{\bm{\tilde{#1}}}

\newcommand{\frakS}{\mathfrak{S}}

\newcommand{\backin}[2]{\quad(#1 \in #2)}
\newcommand{\map}[3]{#1: #2 \to #3}
\newcommand{\seq}[2]{(#1_#2)_{#2=1}^{\infty}}

\newcommand{\del}{\partial}
\newcommand{\nb}{\nabla}
\newcommand{\D}{\mathrm{D}}
\newcommand{\inv}[1]{\frac{1}{#1}}
\newcommand{\minimize}[2]{\underset{#1\in#2}{\text{minimize }}}
\newcommand{\dist}{\text{dist}}

\newcommand{\argmin}[1]{\underset{#1}{\text{argmin }}}
\newcommand{\mor}[2]{{}^{#2}#1}
\newcommand{\prox}[1]{\text{Prox}_{#1}}
\newcommand{\sign}{\text{sign}}
\newcommand{\toinf}{\to \infty}
\newcommand{\rmone}{\mathrm{I}}
\newcommand{\rmtwo}{{\mathrm{I}\hspace{-1.2pt}\mathrm{I}}}

\newcommand{\eqsize}[2]{\scalebox{#1}[1]{$\displaystyle #2$}}
\DeclareMathOperator*{\Limsup}{Limsup}

%% file: footnote.tex
\newcommand{\noteA}{\footnote{
  The name \textit{phase retrieval} comes from its applications where the measurement matrix $A:=[\ba_1,\ba_2,\cdots,\ba_n]^T$
  and the measurement $\bb^\star:=[\abs{\abra{\ba_1,\x^\star}}^2,\abs{\abra{\ba_2,\x^\star}}^2,...,\abs{\abra{\ba_n,\x^\star}}^2]^T\in\R^n$ represent a Fourier-type transform and a phaseless measurement, respectively.
  (For this reason, $A$ is also often considered as a complex matrix in phase retrieval literature; however, in this paper, we assume $A$ to be real-valued for simplicity as in many previous studies \cite{hand2016robust,hand2016corruption,zhang2018median,zheng2024new,kim2024robust}.)
}}

\newcommand{\noteB}{\footnote{
  The proximity operator of every $f$ in \cref{tab:options of phi} is found, e.g., in \cite[Exm. 6.8]{beck2017first}.
  On the other hand, the proximity operators of $g$ for the MCP function and the capped $\ell_1$ norm can be expressed with those of the univariate prox-friendly functions $\widehat{r}_{\lambda,\beta}$ and $\max\cbra*{|\cdot|-\beta,0}$ \cite[Thm. 6.6]{beck2017first}
  (see also \cite[Exm. 6.66, Thm. 6.12]{beck2017first} and \cite{prox_repository} for the detailed expressions of their proximity operators).
  Lastly, the proximity operator of $g(\z):=\sum_{i=1}^{K} |[\z]_{\downarrow i}|\ (\z\in\R^n)$ can be computed by a special case of \cite[Alg.4]{bogdan2015slope}.
}}
 
\newcommand{\noteC}{\footnote{
  Equation \cref{eq:equivalence of limiting and frechet subdifferential} is obtained by combining \cite[(Step1) of Proof of Lem.2.4]{kume2024variableLong} and \cite[Cor.8.11]{rockafellar2009variational} as below.
  Since $f\circ\mathfrak{S}$ and $g\circ\mathfrak{S}$ have a property called \textit{subdifferential regularity} (see \cite[Def. 7.25]{rockafellar2009variational}) by \cite[(Step1) of Proof of Lem. 2.4]{kume2024variableLong},
  we see from \cite[Cor. 8.11]{rockafellar2009variational} that \cref{eq:equivalence of limiting and frechet subdifferential} holds if $\del_{\mathrm{L}}(f\circ\frakS)(\x)\ne \emptyset$ and $\del_{\mathrm{L}}(g\circ\frakS)(\x)\ne \emptyset$.
  In a case where $\del_{\mathrm{L}}(f\circ\frakS)(\x) = \emptyset$ or $\del_{\mathrm{L}}(g\circ\frakS)(\x) = \emptyset$, \cref{eq:equivalence of limiting and frechet subdifferential} trivially holds
  from $\del_{\mathrm{L}}(f\circ\frakS)(\x)\supset \del_{\mathrm{F}}(f\circ\frakS)(\x)$ and $\del_{\mathrm{L}}(g\circ\frakS)(\x)\supset \del_{\mathrm{F}}(g\circ\frakS)(\x)$.
}}
\newcommand{\noteD}{\footnote{
  The code of IPL can be found in ``\url{https://github.com/zhengzhongpku/IPL-code-share}''.
}}

\newcommand{\noteE}{\footnote{
  We received the code of Robust-AM directly from Mr. Kim (The Ohaio State University) who is the first author of \cite{kim2024robust}.
}}

\newcommand{\noteF}{\footnote{
  However, we also see that the large $K$ results in low success rate when the number $n$ of measurements is small (see the lower-left region of \cref{fig:results_d_100_Cauchy} (h) and \cref{fig:results_d_500_Cauchy} (g)).
  This is probably because the trimmed $\ell_1$ with a large $K$ ignores not only outliers, but also many inliers, thereby excessively reducing the number of effective inlier measurements available for estimation, especially when the number $n$ of measurements is small.
}}

\newcommand{\noteG}{\footnote{
  Although this fact is elemntary, we provide a simple proof for our self-containedness.
  For any $\bm{v} \in \Limsup_{k\to\infty}(\bm{p}_k + \bm{q}_k)$, there exists a subsequence $(\bm{p}_{m(l)} + \bm{q}_{m(l)})_{l=1}^{\infty}\to \bm{v}$, where $\map{m}{\N}{\N}$ is monotonically increasing.
  From the boundedness of $\seq{\bm{q}}{k}$ and the Bolzano-Weierstrass theorem (see, e.g., \cite[Thm. 3.24]{Apostol1985mathematical}), we get $\bm{q}_{m(l)} \to \exists \bm{q} \in \R^d$ by passing through further a subsequence of $(\bm{q}_{m(l)})_{l=1}^{\infty}$ (and renaming it as $(\bm{q}_{m(l)})_{l=1}^{\infty}$).
  Hence, we have $\bm{q} \in \Limsup_{k\toinf} \bm{q}_k$ and $\bm{v}-\bm{q} = \lim_{l\toinf} ((\bm{p}_{m(l)} + \bm{q}_{m(l)}) - \bm{q}_{m(l)}) = \lim_{l\toinf} \bm{p}_{m(l)} \in \Limsup_{k\toinf} \bm{p}_k$, which implies $\bm{v} = (\bm{v}-\bm{q}) + \bm{q} \in \Limsup_{k\toinf} \bm{p}_k + \Limsup_{k\toinf} \bm{q}_k$.
}}

%% file: introduction.tex
\section{Introduction}\label{sec:introduction}
\IEEEPARstart{P}{hase} retrieval is a problem of estimating an original signal $\x^\star\in \R^d$ or $-\x^\star$ from a quadratic measurement
\begin{equation}
  \bb^\star := [\abra{\ba_1,\x^\star}^2,\abra{\ba_2,\x^\star}^2,\cdots,\abra{\ba_n,\x^\star}^2]^T \in \R^n,
\end{equation}
where $\ba_1,\ba_2,...,\ba_n\in \R^d$ are known measurement vectors.\noteA
The measurement $\bb^\star$ can also be expressed as 
\begin{equation}
  \bb^\star = (A\x^\star) \odot (A\x^\star)
\end{equation}
by using the matrix $A:=[\ba_1,\ba_2,\cdots,\ba_n]^T\in \R^{n\times d}$ and the Hadamard product (i.e., entry-wise product) $\odot$.
Phase retrieval problem arises in various applications including crystallography \cite{millane1990phase,hauptman1991phase}, optics \cite{walther1963question,shechtman2015phase}, and astronomy \cite{fienup1987phase}.
In this paper, we consider a scenario where only the following measurement $\bb$, corrupted by outliers, is available:
\begin{equation}\label{eq:corrrupted measurement}
  [\bm{b}]_i := 
  \begin{cases}
    \abra{\bm{a}_i,\x^\star}^2+\varepsilon_i & i\in \mathcal{I}_{\text{in}}\\
    \xi_i & i\in \mathcal{I}_{\text{out}},
  \end{cases}
\end{equation}
in which $\mathcal{I}_\text{out}\subset \{1,2,\ldots,n\}$ and $\mathcal{I}_\text{in} := \{1,2,\ldots,n\} \setminus \mathcal{I}_\text{out} $ denote index sets for outliers $\xi_i\in \R$ and inliers respectively, and $\varepsilon_i\in \R$ represents a certain additive noise, e.g., white Gaussian noise.
Such a corrupted measurement often appears in many phase retrieval imaging applications \cite{weller2015undersampled}, for example due to sensor failures and recording errors.

In a case where there is no outlier, i.e., $\mathcal{I}_\text{out}=\emptyset$, 
a variety of phase retrieval methods have been proposed over the decades, including \textit{Gerchberg-Saxton algorithm} \cite{gerhberg1972practical}, \textit{hybrid-input output method} \cite{fienup1978reconstruction,fienup1982phase}, \textit{PhaseLift} \cite{candes2015phase_PhaseLift}, and \textit{Wirtinger flow} \cite{candes2015phase_Wirtinger}, to name a few.
Recently, for the case $\mathcal{I}_\text{out} \ne \emptyset$, numerous studies have aimed to develop \textit{robust phase retrieval} methods for achieving high-precision estimation even in the presence of outliers \cite{weller2015undersampled,hand2016robust,hand2016corruption,zhang2018median,duchi2019solving,zheng2024new,kim2024robust}.
Among these, \cite{duchi2019solving} and \cite{zheng2024new} consider optimization-based methods analogous to the well-known \textit{least absolute deviation method} (see, e.g., \cite{bloomfield1983least}) in the field of robust statistics.
To be precise, they utilize a solution of the nonconvex optimization model 
\begin{equation}\label{eq:model with l1}
  \minimize{\x}{\R^d} \eqsize{0.94}{\Phi_1(\x) := \norm*{(A\x) \odot (A\x)-\bm{b}}_1 = \sum_{i=1}^n \abs*{\abra{\ba_i,\x}^2 - [\bb]_i}}
\end{equation}
as an estimated signal of $\x^\star$ or $-\x^\star$, where the $\ell_1$ norm  $\map{\norm{\cdot}_1}{\R^n}{\R},\ \z \mapsto  \sum_{i=1}^{n} |[\z]_i|$ serves as a loss function.
In addition, \cite{weller2015undersampled}, \cite{hand2016robust}, and \cite{kim2024robust} also adopt similar optimization models with the $\ell_1$ loss function.
More specifically, \cite{weller2015undersampled} uses a sparse regularized version of \cref{eq:model with l1} under the sparsity assumption on the target signal $\x^\star$;
\cite{hand2016robust} studies a convex relaxation of the model \cref{eq:model with l1};
and \cite{kim2024robust} employs a modified formulation of \cref{eq:model with l1} 
\begin{equation}\label{eq:modified model with l1}
  \minimize{\x}{\R^d} \Phi_2(\x) := \sum_{i=1}^n\abs*{\abs{\abra{\ba_i,\x}}-\sqrt{\abs{[\bb]_i}}},
\end{equation}
in which $\abra{\ba_i,\x}^2$ and $[\bb]_i$ are replaced by their square roots $\abs{\abra{\ba_i,\x}}$ and $\sqrt{\abs{[\bb]_i}}$, respectively.
It is reported in \cite{duchi2019solving,kim2024robust} that these methods using the $\ell_1$ norm achieve superior numerical estimation performance compared to other robust phase retrieval methods \cite{hand2016corruption,zhang2018median}.

Nevertheless, it is questionable whether the $\ell_1$ norm in the model \cref{eq:model with l1} can adequately suppress effects caused by outliers.
To explain this, we rewrite the cost function in \cref{eq:model with l1} as
$\sum_{i\in\mathcal{I}_{\text{in}}}\abs*{\abra{\bm{a}_i,\x}^2-\abra{\bm{a}_i,\x^\star}^2-\varepsilon_i} + \sum_{i\in\mathcal{I}_{\text{out}}}\abs*{\abra{\bm{a}_i,\x}^2-\xi_i}$.
When there are numerous outliers, or when each $\xi_i$ is large, the second summation also becomes large even if $\x$ is close to the target signal $\x^\star$ or $-\x^\star$.
In this situation, solutions of \cref{eq:model with l1} may deviate from the target $\x^{\star}$ or $-\x^{\star}$, from which the performance of the estimation via \cref{eq:model with l1} may deteriorate.
Indeed, similar deteriorations caused by the $\ell_1$ loss have been reported in robust regression \cite{yukawa2023linearly}, robust matrix factorization \cite{yao2018scalable}, and robust tensor recovery \cite{yang2015robust}.
From these observations, the $\ell_1$ norm is not necessarily an ideal loss function for achieving a robust estimation against outliers.
Hence, it is expected that the estimation performance of \cref{eq:model with l1} can be enhanced by replacing the $\ell_1$ norm with a more appropriate robust loss function.

\begin{table}[t]
  \begin{threeparttable}
    \small
    \setlength{\tabcolsep}{0.15em} 
    \setlength{\extrarowheight}{1ex}
    \caption{
      Examples of DC loss function $\varphi$ in the model \cref{eq:model with nonconvex}
    }
    \label{tab:options of phi}
    \begin{tabular}{|c|c|c|c|}
      \hline
      name & $\eqsize{0.95}{\varphi(\z)=(f-g)(\z)}$ & $f(\z)$ & $g(\z)$ \\ \hline
      $\ell_1$ norm & $\displaystyle\sum_{i=1}^{n}|[\z]_i|$ & $\displaystyle\sum_{i=1}^{n}|[\z]_i|$ & 0 \\ 

      \begin{tabular}{c}
        MCP \\[-1ex]
        \cite{zhang2010nearly}
      \end{tabular}& 
      \begin{tabular}{c}
        $\displaystyle\sum_{i=1}^{n} r_{\lambda,\beta}([\z]_i)$ \tnote{*a} \\[-1ex]
        ($\lambda, \beta\in\R_{++}$)
      \end{tabular}
      & $\displaystyle\lambda\sum_{i=1}^{n}|[\z]_i|$ & $\displaystyle\sum_{i=1}^{n} \widehat{r}_{\lambda,\beta}([\z]_i)$ \tnote{*b} \\

      \begin{tabular}{c}
        Capped $\ell_1$ \\[-1ex]
        \cite{zhang2008multi}
      \end{tabular}&
      \begin{tabular}{c}
        $\displaystyle\sum_{i=1}^{n} \min\cbra*{|[\z]_i|,\beta}$ \\[-1ex]
        ($\beta\in\R_{++}$)
      \end{tabular}
      & $\displaystyle\sum_{i=1}^{n}|[\z]_i|$ & $\eqsize{0.94}{\displaystyle\sum_{i=1}^{n}\max\cbra*{|[\z]_i|\text{\,--\,}\beta,0}}$ \\ 

      \begin{tabular}{c}
        Trimmed $\ell_1$\\[-1ex]
        \cite{hossjer1994rank}
      \end{tabular}
      &
      \begin{tabular}{c}
        $\displaystyle\sum_{i=K+1}^{n} |[\z]_{\downarrow i}|$ \tnote{*c}\\[-1ex]
        ($0\le K< n$)
      \end{tabular}
      & $\displaystyle\sum_{i=1}^{n}|[\z]_i|$ & $\displaystyle\sum_{i=1}^{K} |[\z]_{\downarrow i}|$ \\ \hline
    \end{tabular}
  
    \vspace{0.5ex}
    \begin{tablenotes}
      \item[*a] 
      $r_{\lambda,\beta}(t) :=
      \begin{cases*}
        \lambda|t| - \frac{t^2}{2\beta} & $|t|\le \beta \lambda$, \\
        \frac{\beta \lambda^2}{2}         & otherwise. 
      \end{cases*}
      $
      \vspace{0.5ex}
      \item[*b] 
      $\widehat{r}_{\lambda,\beta}(t) :=
      \begin{cases*}
        \frac{t^2}{2\beta} & $|t|\le \beta \lambda$, \\
        \lambda|t| - \frac{\beta \lambda^2}{2}         & otherwise.
      \end{cases*}
      $
      
      \item[*c]
      $[\z]_{\downarrow i}$ denotes the entry of $\z$ whose absolute value is the $i$-th largest.
    \end{tablenotes}
  \end{threeparttable}
  \vspace{-10pt}
\end{table}

In this paper, in order to adopt more robust loss functions than the $\ell_1$ norm, we propose the following generalized model of $\cref{eq:model with l1}$:
\begin{equation}\label{eq:model with nonconvex}
  \minimize{\x}{\R^d} \Phi_3(\x) := \varphi \rbra*{(A\x) \odot (A\x)-\bm{b}},
\end{equation}
where $\map{\varphi}{\R^n}{\R}$ is given as a DC (Difference-of-Convex) function, i.e., $\varphi$ can be expressed as a difference $f-g$ of two convex functions $f$ and $\map{g}{\R^n}{\R}$.
The DC loss functions $\varphi$ include not only the $\ell_1$ norm but also nonconvex functions
such as the MCP (Minimax-Concave-Penalty) function \cite{zhang2010nearly,yukawa2023linearly}, the capped $\ell_1$ norm \cite{zhang2008multi,sun2013robust}, and the trimmed $\ell_1$ norm \cite{hossjer1994rank,mafusalov2016cvar,gotoh2018dc,yagishita2025exact} to name a few (see \cref{tab:options of phi} for typical DC loss functions and their DC decompositions).
Such nonconvex DC functions have been employed as loss functions instead of the $\ell_1$ norm in the fields of robust estimation, such as robust regression \cite{yukawa2023linearly},\cite{hawkins1999applications} and robust low rank matrix recovery \cite{sun2013robust,Kume-Yamada25C}
(see \cref{rma:Robustness of nonconvex DC functions} for intuitive reasons why these nonconvex functions are promising robust loss functions).

\begin{remark}[Robustness of nonconvex DC functions]\label{rma:Robustness of nonconvex DC functions}
  \leavevmode
  \begin{enumerate}
    \item (Upper bounded loss functions)
    As seen from \cref{tab:options of phi}, the MCP function and the capped $\ell_1$ norm are given respectively by the separable sum of the univariate functions whose outputs are always bounded above by a certain tunable constant.
    Hence, with a properly tuned constant, these loss functions do not overpenalize large outliers, unlike the $\ell_1$ norm.
    \item (Trimmed $\ell_1$ norm)
    The trimmed $\ell_1$-norm outputs the sum of the smallest $n-K$ absolute values of input vector entries, where $K$ is a tunable parameter.
    If $K$ is set properly, e.g., as the number of outliers, the trimmed $\ell_1$ norm can suppress an influence caused by large outliers. 
    Thus, the trimmed $\ell_1$ norm can be an alternative robust loss function.
  \end{enumerate}
\end{remark}

Although these nonconvex functions are promising loss functions, existing optimization algorithms \cite{weller2015undersampled,hand2016robust,duchi2019solving,zheng2024new,kim2024robust} employed for $\ell_1$ loss-based models are not directly applicable to the proposed model \cref{eq:model with nonconvex} with nonconvex $\varphi$
because these algorithms rely on the convexity of the $\ell_1$ norm.

In this paper, we propose an optimization algorithm applicable to the model \cref{eq:model with nonconvex} by exploiting a fact that the cost function in \cref{eq:model with nonconvex} is the composition of a DC function $\varphi$ with a smooth mapping
\begin{equation}\label{eq:S for robust phase retrieval}
  \map{\frakS_{\text{RPR}}}{\R^d}{\R^n},\ \x \mapsto (A\x)\odot(A\x)-\bb.
\end{equation}
To broaden the applicability of the proposed algorithm (see \cref{rem:Applicability of problem} (c)), we consider the following optimization problem that includes the model \cref{eq:model with nonconvex} (see \cref{rem:Applicability of problem} (b)).
\begin{problem}[DC composite-type problem]\label{problem}
  \begin{equation}
    \underset{\x \in \R^d}{\text{minimize }}F(\x):= \underbrace{(f-g)}_{\textstyle\varphi} \circ\, \mathfrak{S}(\x), \label{eq:problem}
  \end{equation}
  where
  \begin{enumerate}
    \item $\map{f}{\R^n}{\R}$ and $\map{g}{\R^n}{\R}$ are
    \item[]
    \begin{enumerate}
      \item $\eta_{f}$- and $\eta_{g}$-weakly convex with $\eta_{f}, \eta_{g} >0$, i.e., $f+\frac{\eta_f}{2}\norm{\cdot}^2$ and $g+\frac{\eta_g}{2}\norm{\cdot}^2$ are convex\\(we define \scalebox{0.95}{$\eta:=\max\{\eta_f,\eta_g\}$} for convenience),
      \item $L_f$- and $L_g$-Lipschitz continuous with $L_f,L_g>0$, i.e.,  $\abs{f(\x)-f(\y)}\le L_f\norm{\x-\y}$ and $\abs{g(\x)-g(\y)}\le L_g\norm{\x-\y}$ hold for all $\x,\y\in \R^d$,
      \item prox-friendly, i.e, their \textit{proximity operators} (see \cref{moreau envelope}) are available as computable tools
    \end{enumerate}
    (see \cref{rem:Applicability of problem}~(a) for a reason why we assume weak convexity in (i) instead of convexity);
    \item $\map{\mathfrak{S}}{\R^d}{\R^n}$ is differentiable and its Fr\'{e}chet derivative $\map{{\rm D} \mathfrak{S}}{\R^d}{\R^{n \times d}}$ is $L_{\D\frakS}$-Lipschitz continuous (see Notation for the definition of Fr\'{e}chet derivative);
    \item $F$ is bounded below, i.e., $\inf_{\x \in \R^d} F(\x)> -\infty$.
  \end{enumerate}
\end{problem}
\begin{remark}[Applicability of \cref{problem}]\label{rem:Applicability of problem}
  \leavevmode
  \begin{enumerate}
    \item (Functions expressed as $f-g$)
      All functions $\varphi$ in \cref{tab:options of phi} admit DC decompositions $\varphi=f-g$ where both $f$ and $g$ are just convex, Lipschitz continuous, and prox-friendly\noteB  functions.
      By virtue of assuming weak convexity (rather than convexity) for $f$ and $g$ as in (a)(i),
      we can also employ a wider variety of robust loss functions (or sparsity-promoting functions; see \cref{rem:Applicability of problem}~(c)) as $\varphi$ beyond those listed in \cref{tab:options of phi}.
      For example, the \textit{Cauchy loss} (see, e.g., \cite{mlotshwa2022cauchy}) and the \textit{log-sum penalty} \cite{candes2008enhancing} are weakly convex, Lipschitz continuous, and prox-friendly;
      hence, they can be used as $\varphi$ in \cref{problem} by setting $f=\varphi$ and $g\equiv 0$.
    \item (Robust phase retrieval)
    The model \cref{eq:model with nonconvex} with all $\varphi$ in \cref{tab:options of phi} is reproduced as a special case of \cref{problem} by setting $f$ and $g$ as in \cref{tab:options of phi}, and $\frakS := \frakS_{\text{RPR}}$.
    Indeed, $f$ and $g$ in \cref{tab:options of phi} satisfy the assumptions in \cref{problem} as stated in (a), and $\D\frakS_{\text{RPR}}:\x\mapsto [2\abra{\ba_1,\x }\ba_1,2\abra{\ba_2,\x }\ba_2,...,2\abra{\ba_n,\x }\ba_n]^T$ is $(2\sqrt{\sum_{i=1}^n \norm{\ba_i}^4})$-Lipschitz continuous (see \cref{eq:Lipschitz continuity of DS_RPR}).
    \item (Example of applications beyond phase retrieval)
    DC functions $\varphi = f-g$ in \cref{tab:options of phi} have also been used as regularization functions to promote the sparsity of $\frakS(\x)$.
    Hence, \cref{problem} also appears in sparsity-aware applications such as image restoration \cite{you2019nonconvex}, compressed sensing \cite{huang2015two}, and cardinality-constrained linear regression \cite{gotoh2018dc}.
    In such applications, optimization models with a sparse regularization term $(f-g) \circ \frakS (\x)$ are typically formulated as 
    \begin{equation}
      \underset{\x \in \R^d}{\text{minimize }} h(\x) + (f-g) \circ \frakS (\x), \label{eq:problem with h}
    \end{equation}
    where $\map{h}{\R^d}{\R}$ is differentiable with a Lipschitz continuous gradient and serves as a data fidelity, e.g., least squares.
    The problem \cref{eq:problem with h} is a special instance of \cref{problem},
    because the cost function of \cref{eq:problem with h} can be translated into the form $(\widehat{f}-\widehat{g}) \circ \widehat{\frakS} (\x)$
    by introducing $\map{\widehat{\frakS}}{\R^d}{\R^n\times\R}: \x \mapsto [\frakS(\x)^T, h(\x)]^T$, $\map{\widehat{f}}{\R^n\times\R}{\R}:[\z^T,t]^T\mapsto f(\z) + t$, and $\map{\widehat{g}}{\R^n\times\R}{\R}:[\z^T,t]^T\mapsto g(\z)$.
    Indeed, if $f,g$, and $\frakS$ satisfy the assumptions in \cref{problem}, then $\widehat{f},\widehat{g}$, and $\widehat{\frakS}$ do as well.
  \end{enumerate}
\end{remark}

The proposed algorithm for DC composite-type problem (\cref{problem}) is designed as a gradient descent update of a time-varying smoothed surrogate function of $F$ in \cref{eq:problem}.
With the \textit{Moreau envelopes} (see \cref{moreau envelope}) $\mor{f}{\mu}$ of $f$ and $\mor{g}{\mu}$ of $g$, the proposed surrogate function is given as $(\mor{f}{\mu_k}-\mor{g}{\mu_k})\circ \mathfrak{S}$, 
where $\seq{\mu}{k} \subset \R$ is a monotonically decreasing sequence of convergence to zero.
By utilizing the proximity operators of $f$ and $g$, the proposed algorithm can be implemented as a single-loop algorithm for \cref{problem} including the model \cref{eq:model with nonconvex}.
We present an asymptotic convergence analysis (\cref{thm:convergence theorem}) in the sense of a \textit{DC composite critical point} (see \cref{df:critical point}) under \cref{asm:descent assumption} on the surrogate function. (This assumption is satisfied for the model \cref{eq:model with nonconvex}; see \cref{pro:sufficient condition for descent assumption} for details.)

Our numerical experiment in scenarios with numerous outliers demonstrates that the proposed method, based on the model \cref{eq:model with nonconvex} with DC loss functions, 
achieves higher estimation performance than existing state-of-the-art methods \cite{zheng2024new,kim2024robust}. 

\begin{relatedwork}
For \cref{problem}, a recently developed \textit{DC composite algorithm} (DCCA) \cite{le2024minimizing} can be used.
If an exact solution to a certain subproblem in DCCA is available, then DCCA has a convergence guarantee in terms of a DC composite critical point.
In practice, however, DCCA requires an infinite number of iterations of an inner loop in order to find the exact solution of the subproblem. 
The convergence analysis of DCCA does not cover realistic cases where only inexact solutions of the subproblem are available.
In contrast, the proposed algorithm has a convergence guarantee and does not require infinite iterations of an inner loop.

The proposed algorithm serves as an extension of \textit{variable smoothing-type algorithms} \cite{bohm2021variable,kume2024variable},
originally developed for a special case $g\equiv 0$ of the problem \cref{eq:problem with h} (more precisely, $\frakS$ is assumed to be linear in \cite{bohm2021variable}), where \cref{eq:problem with h} is an instance of \cref{problem} (see \cref{rem:Applicability of problem} (c)).
Note that our extension enables us to cover the model \cref{eq:model with nonconvex} with non-weakly convex DC loss function $\varphi$ such as the capped $\ell_1$ norm and the trimmed $\ell_1$ norm.

For a special case $\frakS = \mathrm{Id}$ of \cref{eq:problem with h}, we also found a similar algorithm \cite{sun2023algorithms} to the proposed algorithm in the sense that the Moreau envelopes of $f$ and $g$ are exploited.
This algorithm is based on a certain approximate gradient descent method of a smoothed surrogate function $h+ \mor{f}{\mu} - \mor{g}{\mu}$ with fixed $\mu>0$. 
Even with such a fixed surrogate function, the algorithm \cite{sun2023algorithms} has a convergence guarantee to a DC composite critical point (see \cite[Thm. 2]{sun2023algorithms}) without requiring any inner loop. 
However, we have not found yet any extension of the idea in \cite{sun2023algorithms} that can handle a nonlinear $\frakS$ such as $\frakS_{\text{RPR}}$.
\end{relatedwork}

\begin{notation}
  $\N$, $\R$ and $\R_{++}$ denote respectively the sets of all positive integers, all real numbers and all positive real numbers.
  $\norm{\cdot}$ and $\abra{\cdot,\cdot}$ are respectively the Euclidean norm and the Euclidean inner product.
  For subsets $S_1,S_2$ of Euclidean space, we define $S_1 \pm S_2 := \Set{\bm{v}_1 \pm \bm{v}_2}{\bm{v}_1\in S_1, \bm{v}_2\in S_2}$.
  For $\bm{v}\in\R^n$, $[\bm{v}]_i\in\R$ stands for the $i$-th entry.
  The operator norm for a matrix $X\in\R^{n\times d}$ is defined by $\norm{X}_{\mathrm{op}}:=\sup_{\norm{\y}\le 1}\norm{X\y}$.
  We use $\mathrm{Id}$ to denote the identity mapping.
  For Euclidean spaces $\mathcal{X},\mathcal{Y}$ and a continuously differentiable mapping $J:\mathcal{X} \to \mathcal{Y}$, its Fr\'{e}chet derivative at $\x \in \mathcal{X}$ is 
  the linear operator $\map{\mathrm{D}J(\x)}{\mathcal{X}}{\mathcal{Y}}$ such that $\lim_{\mathcal{X}\setminus{\{\bm{0}\}}\ni\bm{h} \to \bm{0}}\frac{J(\x+\bm{h})-J(\x)-\D J(\x)[\bm{h}]}{\norm{\bm{h}}} = 0$.
  (Note that we also regard $\D J(\x)$ as a matrix, because every linear operator can be represented by matrix-vector multiplication in finite dimensions.)
  In particular with $\mathcal{Y}=\R$, $\map{\nb J}{\mathcal{X}}{\mathcal{X}}$ is called the gradient of $J$ if
  $\nb J(\x) \in \mathcal{X}$ at $\x \in \mathcal{X}$ satisfies $\mathrm{D}J(\x)[\bm{v}] = \abra{\nb J(\x),\bm{v}}\ (\bm{v}\in\mathcal{X})$.
  For a point sequence $\seq{\bm{p}}{k}\subset \mathcal{X}$, we define its \textit{outer limit} as %
  \begin{equation}
    \Limsup_{k\toinf}\bm{p}_k := \Set{\bm{p}\in \mathcal{X}}{\bm{p}\text{ is a cluster point of }\seq{\bm{p}}{k}},
  \end{equation}
  where the outer limit is originally defined for set sequences (see, e.g., \cite[Def. 4.1]{rockafellar2009variational}), but we only use the outer limit for point sequences (i.e., the outer limit for sequences of singletons).
\end{notation}

%% file: preliminary.tex
\section{Preliminary}
\noindent As an extension of the subdifferential of convex functions, we use the following subdifferential of nonconvex functions
(see, e.g., a recent survey \cite{li2020understanding} for readers who are unfamiliar with nonsmooth analysis).
\begin{dfn}[Subdifferential {\cite[Def. 8.3]{rockafellar2009variational}}]
  For a function $\map{\psi}{\R^N}{\R}$, the \textit{limiting (or general) subdifferential} of $\psi$ at $\bar{\x} \in \R^N$ is defined by
  \begin{equation}
    \eqsize{0.90}{
    \del_{\mathrm{L}}\psi(\bar{\x}):=\Set*{\bm{v}\in\R^N}{
      \begin{aligned}
        &\exists \seq{\x}{k}\to\bar{\x},\ \exists\bm{v}_k \in \del_{\mathrm{F}}\psi(\x_k)\ (k\in\N)\\ 
        &\text{s.t. } \seq{\bm{v}}{k}\to \bm{v} \text{ and } \psi(\x_k)\to \psi(\bar{\x})
      \end{aligned}
    }.}
  \end{equation}
  Here, $\del_{\mathrm{F}} \psi(\hat{\x}) \subset \R^N$ denotes the \textit{Fr\'{e}chet (or regular) subdifferential} at $\hat{\x}\in\R^N$, and it is the set of all vectors $\bm{w} \in \R^N$ such that
  \begin{equation}
    \lim_{\delta \searrow 0}\ \inf_{0 < \norm{\x - \hat{\x}} < \delta}\frac{\psi(\x)-\psi(\hat{\x})-\abra{\bm{w},\x-\hat{\x}}}{\norm{\x-\hat{\x}}} \ge 0.
  \end{equation}
\end{dfn}
From the definition, $\del_{\mathrm{L}}\psi(\bar{\x})\supset \del_{\mathrm{F}}\psi(\bar{\x})$ obviously holds.
If $\psi$ is convex, the limiting subdifferential is equivalent to the convex subdifferential \cite[Prop. 8.12]{rockafellar2009variational}.
Furthermore, if $\psi$ is continuously differentiable on a neighborhood of $\bar{\x}$, $\del_{\mathrm{L}}\psi(\bar{\x}) = \cbra*{\nb \psi(\bar{\x})}$ holds \cite[Exe. 8.8 (b)]{rockafellar2009variational}.

\begin{fac}[{\cite[(Step1) of Proof of Lem. 2.4]{kume2024variableLong},\cite[Cor. 8.11]{rockafellar2009variational}}]\label{fac:equivalence of limiting and frechet subdifferential}
  Consider \cref{problem}. Then, for any $\x\in\R^d$, we have\noteC
  \begin{equation}\label{eq:equivalence of limiting and frechet subdifferential}
    \del_{\mathrm{L}} (f\circ\frakS)(\x) = \del_{\mathrm{F}} (f\circ\frakS)(\x),\ \del_{\mathrm{L}} (g\circ\frakS)(\x) = \del_{\mathrm{F}} (g\circ\frakS)(\x).
  \end{equation}
\end{fac}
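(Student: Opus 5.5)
The plan is to prove the identity for $f\circ\frakS$; the argument for $g\circ\frakS$ is the same with $(g,\eta_g)$ in place of $(f,\eta_f)$. The inclusion $\del_{\mathrm{F}}(f\circ\frakS)(\x)\subset\del_{\mathrm{L}}(f\circ\frakS)(\x)$ holds by definition, so only the reverse inclusion needs work. I would obtain it in three steps: a local decomposition of $f\circ\frakS$ into a convex function minus a $C^1$ function, a computation of both subdifferentials through that decomposition, and a comparison of the results.

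\textbf{Step 1 (local convex-minus-smooth structure).} Write $f=\widetilde f-\frac{\eta_f}{2}\norm{\cdot}^2$, where $\widetilde f:=f+\frac{\eta_f}{2}\norm{\cdot}^2$ is convex and finite. Then $\frac{\eta_f}{2}\norm{\frakS(\cdot)}^2$ is $C^1$, and $f\circ\frakS$ is, up to this $C^1$ term, the composition $\widetilde f\circ\frakS$ of a finite convex function with a $C^1$ map. Such compositions satisfy the basic chain-rule qualification automatically, because the horizon subgradients of the finite convex function $\widetilde f$ are only $\{\bm{0}\}$. The chain rule \cite[Thm. 10.6]{rockafellar2009variational} then gives $\del_{\mathrm{L}}(\widetilde f\circ\frakS)(\x)\subset \D\frakS(\x)^T\del\widetilde f(\frakS(\x))$.

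\textbf{Step 2 (the Fr\'echet side).} For any $\bm{u}\in\del\widetilde f(\frakS(\x))$, convexity gives
\[
\widetilde f(\frakS(\y))\ge \widetilde f(\frakS(\x))+\abra{\bm{u},\frakS(\y)-\frakS(\x)}.
\]
Expanding $\frakS(\y)-\frakS(\x)=\D\frakS(\x)(\y-\x)+o(\norm{\y-\x})$ and using that $\bm{u}$ is fixed shows $\D\frakS(\x)^T\bm{u}\in\del_{\mathrm{F}}(\widetilde f\circ\frakS)(\x)$. Hence
\[
\del_{\mathrm{F}}(\widetilde f\circ\frakS)(\x)\supset\D\frakS(\x)^T\del\widetilde f(\frakS(\x))\supset\del_{\mathrm{L}}(\widetilde f\circ\frakS)(\x),
\]
and therefore $\del_{\mathrm{L}}=\del_{\mathrm{F}}$ for $\widetilde f\circ\frakS$.

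\textbf{Step 3 (adding back the smooth term).} Both subdifferentials satisfy the exact sum rule with a $C^1$ function \cite[Exe. 8.8(c)]{rockafellar2009variational}. Subtracting $\nb\bigl(\frac{\eta_f}{2}\norm{\frakS}^2\bigr)(\x)$ from both sides of the identity in Step 2 therefore yields \cref{eq:equivalence of limiting and frechet subdifferential}. Equivalently, one can conclude by invoking subdifferential regularity together with \cite[Cor. 8.11]{rockafellar2009variational}, as in the footnote.

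The main obstacle is Step 1: one must check that the chain-rule qualification holds and that the convex inequality yields a genuinely Fr\'echet, rather than merely limiting, subgradient. Both follow from the finiteness of $\widetilde f$, which makes it locally Lipschitz, so that $\del\widetilde f$ is bounded near $\frakS(\x)$. Lipschitz continuity of $\D\frakS$ is not needed for this argument; continuity of $\D\frakS$ suffices.
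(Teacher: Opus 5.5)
Your proof is correct, but it takes a different route from the paper's. The paper's footnote computes nothing itself. It imports subdifferential regularity of $f\circ\mathfrak{S}$ and $g\circ\mathfrak{S}$ from Step~1 of the proof of Lem.~2.4 in \cite{kume2024variableLong}. It then applies \cite[Cor.~8.11]{rockafellar2009variational}, which turns regularity into $\partial_{\mathrm{L}}=\partial_{\mathrm{F}}$ whenever $\partial_{\mathrm{L}}\neq\emptyset$, and handles the empty case separately via $\partial_{\mathrm{F}}\subset\partial_{\mathrm{L}}$.

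You instead prove the equality by hand. You establish the sandwich $\partial_{\mathrm{L}}(\widetilde f\circ\mathfrak{S})(\bm{x})\subset\mathrm{D}\mathfrak{S}(\bm{x})^T\partial\widetilde f(\mathfrak{S}(\bm{x}))\subset\partial_{\mathrm{F}}(\widetilde f\circ\mathfrak{S})(\bm{x})$ and transfer it to $f\circ\mathfrak{S}$ with the exact sum rule for a $C^1$ summand. This has several advantages:
\begin{itemize}
\item It is self-contained and needs neither the notion of regularity nor Cor.~8.11.
\item It needs no case distinction, since nonemptiness is automatic from $\partial\widetilde f(\mathfrak{S}(\bm{x}))\neq\emptyset$.
\item It yields the explicit formula $\partial_{\mathrm{L}}(f\circ\mathfrak{S})(\bm{x})=\mathrm{D}\mathfrak{S}(\bm{x})^T\partial_{\mathrm{L}}f(\mathfrak{S}(\bm{x}))$ as a by-product.
\item It shows that only real-valuedness and weak convexity of $f$ and continuity of $\mathrm{D}\mathfrak{S}$ are used, not the Lipschitz continuity of $f$ or of $\mathrm{D}\mathfrak{S}$.
\end{itemize}

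The paper's route is shorter on the page because it outsources exactly this work. The regularity it cites can itself be derived from the last clause of the basic chain rule \cite[Thm.~10.6]{rockafellar2009variational}, applied to the convex (hence regular) $\widetilde f$. Your Step~2 is precisely the first clause of that same theorem.

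One small correction to your closing remark. The Fr\'echet inclusion in Step~2 uses only differentiability of $\mathfrak{S}$ at $\bm{x}$ with $\bm{u}$ held fixed; it does not need boundedness of $\partial\widetilde f$ near $\mathfrak{S}(\bm{x})$. Local Lipschitzness of $\widetilde f$ is needed only for the qualification condition $\partial^\infty\widetilde f(\mathfrak{S}(\bm{x}))=\{\bm{0}\}$.
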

By \cref{fac:equivalence of limiting and frechet subdifferential}, useful facts for limiting and Fr\'{e}chet subdifferentials (see, e.g., \cite[Ch. 8]{rockafellar2009variational}) are applicable to $f\circ \mathfrak{S}$ and $g\circ\mathfrak{S}$.

Unfortunately, finding a global minimizer of \cref{problem} is not realistic due to the severe nonconvexity of $F$.
Instead, in this paper, we focus on finding a DC composite critical point defined, with the limiting subdifferentials, as follows.
\begin{dfn}[DC composite critical point for \cref{problem} {\cite[Def. 1.1]{le2024minimizing}}]\label{df:critical point}
  We call $\x^\star \in \R^d$ a \textit{DC composite critical point} for \cref{problem} if
  \begin{equation}\label{eq:critical point}
    \del_{\mathrm{L}} (f \circ \mathfrak{S})(\x^\star)- \del_{\mathrm{L}} (g \circ \mathfrak{S})(\x^\star) \ni \bm{0},
  \end{equation}
  or equivalently,
  \begin{equation}\label{eq:critical point alternative form}
    \del_{\mathrm{L}} (f \circ \mathfrak{S})(\x^\star) \cap \del_{\mathrm{L}} (g \circ \mathfrak{S})(\x^\star)\ne \emptyset.
  \end{equation}
  (More precisely, \cite[Def. 1.1]{le2024minimizing} employs the condition obtained by replacing $\del_{\mathrm{L}}$ in \cref{eq:critical point alternative form} with $\del_{\mathrm{F}}$;
  however, \cref{fac:equivalence of limiting and frechet subdifferential} shows that this condition coincides with \cref{eq:critical point alternative form} in our setting.)
\end{dfn}

\begin{lemma}[Local optimality implies DC composite criticality]\label{lem:Local optimality implies DC criticality}
Let  $\x^\star \in \R^d$ be a local minimizer of $F$ in \cref{problem}. Then, $\x^\star$ is a DC composite critical point for \cref{problem}.
\end{lemma}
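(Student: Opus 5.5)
The plan is to reduce DC composite criticality to Fréchet-subdifferential calculus, using \cref{fac:equivalence of limiting and frechet subdifferential}. It suffices to show
\[
\del_{\mathrm{F}}(g\circ\frakS)(\x^\star)\subset \del_{\mathrm{F}}(f\circ\frakS)(\x^\star)
\]
and that $\del_{\mathrm{F}}(g\circ\frakS)(\x^\star)\neq\emptyset$. Then any $\bm{w}$ in the latter set lies in both $\del_{\mathrm{L}}(f\circ\frakS)(\x^\star)$ and $\del_{\mathrm{L}}(g\circ\frakS)(\x^\star)$, which gives \cref{eq:critical point alternative form}.

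\textbf{Step 1 (inclusion).} Fix $\bm{w}\in\del_{\mathrm{F}}(g\circ\frakS)(\x^\star)$. By local optimality, for $\x$ near $\x^\star$ we have
\[
f\circ\frakS(\x)-f\circ\frakS(\x^\star)\ \ge\ g\circ\frakS(\x)-g\circ\frakS(\x^\star).
\]
Subtract $\abra{\bm{w},\x-\x^\star}$ from both sides and divide by $\norm{\x-\x^\star}$. Taking $\liminf$ as $\x\to\x^\star$, the right-hand quotient has $\liminf\ge 0$ by the definition of $\bm{w}$. Hence the left-hand quotient also has $\liminf\ge0$, so $\bm{w}\in\del_{\mathrm{F}}(f\circ\frakS)(\x^\star)$. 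This is the standard "Fréchet subgradients of the subtracted part are inherited at local minima" argument and needs no further assumptions.

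\textbf{Step 2 (nonemptiness of $\del_{\mathrm{F}}(g\circ\frakS)(\x^\star)$).} I expect this to be the main point, because Fréchet subdifferentials of nonconvex functions can be empty. The plan is to exploit weak convexity of $g$ and smoothness of $\frakS$.

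Since $g+\frac{\eta_g}{2}\norm{\cdot}^2$ is convex and finite, it has a convex subgradient $\bm{u}$ at $\z^\star:=\frakS(\x^\star)$. Taking $\bm{v}:=\bm{u}-\eta_g\z^\star$, this yields
\[
g(\z)\ge g(\z^\star)+\abra{\bm{v},\z-\z^\star}-\tfrac{\eta_g}{2}\norm{\z-\z^\star}^2 \quad \text{for all } \z.
\]
Substitute $\z=\frakS(\x)$. Use $\frakS(\x)-\frakS(\x^\star)=\D\frakS(\x^\star)(\x-\x^\star)+o(\norm{\x-\x^\star})$. Also use $\norm{\frakS(\x)-\frakS(\x^\star)}^2=O(\norm{\x-\x^\star}^2)$, which holds because $\frakS$ is locally Lipschitz as a $C^1$ map. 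These give
\[
g\circ\frakS(\x)\ge g\circ\frakS(\x^\star)+\abra{\D\frakS(\x^\star)^T\bm{v},\x-\x^\star}+o(\norm{\x-\x^\star}).
\]
Therefore $\D\frakS(\x^\star)^T\bm{v}\in\del_{\mathrm{F}}(g\circ\frakS)(\x^\star)$. The only care needed is the bounded term $\abra{\bm{v},o(\norm{\x-\x^\star})}$, which is still $o(\norm{\x-\x^\star})$.

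Combining Steps 1 and 2 with \cref{fac:equivalence of limiting and frechet subdifferential} yields \cref{eq:critical point alternative form}. Equivalently, $\bm{0}=\bm{w}-\bm{w}$ belongs to the set in \cref{eq:critical point}, so $\x^\star$ is a DC composite critical point.
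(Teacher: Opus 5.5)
Your proof is correct. It shares the paper's two-claim skeleton: nonemptiness of a subdifferential of $g\circ\frakS$ at $\x^\star$, plus inclusion of that subdifferential into the one of $f\circ\frakS$. Your Step~1 is a hand proof of what the paper gets from Fermat's rule and the Fr\'{e}chet sum rule \cite[Cor.~10.9]{rockafellar2009variational}.

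The genuine difference is in Step~2. The paper proves nonemptiness only for the \emph{limiting} subdifferential, using local Lipschitz continuity of $g\circ\frakS$ and \cite[Thm.~9.13]{rockafellar2009variational}. The Fr\'{e}chet subdifferential of a locally Lipschitz function can be empty (e.g.\ $-|t|$ at $t=0$). So the paper must then invoke \cref{fac:equivalence of limiting and frechet subdifferential}, a subdifferential-regularity result, to upgrade its Fr\'{e}chet inclusion to $\del_{\mathrm{L}}(g\circ\frakS)(\x^\star)\subset\del_{\mathrm{L}}(f\circ\frakS)(\x^\star)$.

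You instead use the $\eta_g$-weak convexity of $g$ and differentiability of $\frakS$ at $\x^\star$ to exhibit an explicit Fr\'{e}chet subgradient $\D\frakS(\x^\star)^T\bm{v}$. Everything then stays at the Fr\'{e}chet level, and at the end you only need the trivial inclusion $\del_{\mathrm{F}}\subset\del_{\mathrm{L}}$. Your appeal to \cref{fac:equivalence of limiting and frechet subdifferential} is therefore unnecessary.

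Your route is self-contained and gives the stronger conclusion $\del_{\mathrm{F}}(f\circ\frakS)(\x^\star)\cap\del_{\mathrm{F}}(g\circ\frakS)(\x^\star)\neq\emptyset$. That is criticality in the original sense of \cite[Def.~1.1]{le2024minimizing}, obtained without any regularity result. The paper's Claim~1 needs only Lipschitz continuity of $g$, not weak convexity. However, its transfer step then depends on the heavier \cref{fac:equivalence of limiting and frechet subdifferential}.
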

\begin{proof}
  See Appendix A.
\end{proof}
From \cref{lem:Local optimality implies DC criticality}, being a DC composite critical point is a necessary condition for being a local minimizer.
In a case where $f\circ\frakS$ and $g\circ\frakS$ are convex, finding such a DC composite critical point has been used as an acceptable goal in many DC optimization literature \cite{le2024open,gotoh2018dc,zhang2024inexact,sun2023algorithms}.
Even when $f\circ\frakS$ and $g\circ\frakS$ are not convex, DC composite critical points are adopted as the target of DC composite algorithm in \cite{le2024minimizing}.

The Moreau envelope plays an important role in this paper for designing the proposed algorithm. 
\begin{dfn}[Moreau envelope, proximity operator \cite{bohm2021variable}]\label{moreau envelope}
  Let $\map{\psi}{\R^n}{\R}$ be an $\eta_\psi$-weakly convex function with $\eta_\psi > 0$. Its Moreau envelope and proximity operator at $\bar{\z} \in \R^n$ with $\mu \in (0,\eta_\psi^{-1})$ are respectively defined as
  \begin{align}
    \mor{\psi}{\mu}(\bar{\z}) &:= \min_{\z \in \R^n} \cbra*{\psi(\z) + \frac{1}{2\mu}\norm*{\z-\bar{\z}}^2}, \\
    \prox{\mu \psi}(\bar{\z})  &:= \argmin{\z \in \R^n} \cbra*{\psi(\z) + \frac{1}{2\mu}\norm*{\z-\bar{\z}}^2},
  \end{align}
  where $\prox{\mu \psi}$ is single-valued due to the strong convexity of $\psi + (2\mu)^{-1}\norm{\ \cdot\  - \bar{\z}}^2$.
\end{dfn}

The Moreau envelope $\mor{\psi}{\mu}$ serves as a smoothed surrogate function of $\psi$
because of the next properties.
\begin{fac}[Properties of Moreau envelope]\label{Properties of Moreau envelope}
  Let $\map{\psi}{\R^n}{\R}$ be an $\eta_\psi$-weakly convex function with $\eta_\psi > 0$.
  For $\mu \in (0,\eta_\psi^{-1})$, the following hold.
  \begin{enumerate}
    \item \cite[Thm. 1.25]{rockafellar2009variational} $(\z \in \R^n)\ \lim_{\mu \searrow 0} \mor{\psi}{\mu}(\bm{z}) = \psi(\bm{z})$.
    \item \cite[Cor. 3.4]{hoheisel2020regularization} $\mor{\psi}{\mu}$ is continuously differentiable with $(\z \in \R^n)\ \nb\mor{\psi}{\mu}(\z) = \mu^{-1}\rbra*{\z - \prox{\mu \psi}(\z)}$.
    \item \cite[Cor. 3.4]{hoheisel2020regularization} $\nb\mor{\psi}{\mu}$ is $L_{\nb\mor{\psi}{\mu}}$-Lipschitz continuous with $L_{\nb\mor{\psi}{\mu}}:=\max\cbra{\mu^{-1},\frac{\eta_\psi}{1-\eta_\psi\mu} }$.
  \end{enumerate}
\end{fac}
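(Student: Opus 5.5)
We plan to prove the three items directly, by reducing the weakly convex case to the convex case. Write $\psi = c - \frac{\eta_\psi}{2}\norm{\cdot}^2$, where $c := \psi + \frac{\eta_\psi}{2}\norm{\cdot}^2$ is convex and finite. Set $\alpha := \mu^{-1} - \eta_\psi > 0$ and $\theta := \mu\alpha = 1-\eta_\psi\mu \in (0,1)$.

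For (a), the upper bound $\mor{\psi}{\mu}(\z)\le\psi(\z)$ follows by taking $\z$ itself as the test point in the minimization. For the lower bound, pick $\bm{v}\in\del c(\z)$, which is nonempty since $c$ is finite and convex, and set $\bm{w}:=\bm{v}-\eta_\psi\z$. Combining the subgradient inequality for $c$ with the identity for $\norm{\cdot}^2$ gives
\begin{equation}
\psi(\y)\ge\psi(\z)+\abra{\bm{w},\y-\z}-\tfrac{\eta_\psi}{2}\norm{\y-\z}^2 \quad (\y\in\R^n).
\end{equation}
Inserting this into the definition of $\mor{\psi}{\mu}(\z)$ and minimizing the resulting strongly convex quadratic in $\y-\z$ yields
\begin{equation}
\mor{\psi}{\mu}(\z)\ge\psi(\z)-\frac{\norm{\bm{w}}^2}{2\alpha}.
\end{equation}
The right-hand side tends to $\psi(\z)$ as $\mu\searrow 0$, because $\alpha\to\infty$ while $\bm{w}$ does not depend on $\mu$.

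For (b), we complete the square in the objective:
\begin{equation}
\tfrac{1}{2\mu}\norm{\y-\z}^2-\tfrac{\eta_\psi}{2}\norm{\y}^2 = \tfrac{\alpha}{2}\norm*{\y-\tfrac{\z}{\theta}}^2+\rbra*{\tfrac{1}{2\mu}-\tfrac{1}{2\mu\theta}}\norm{\z}^2 .
\end{equation}
Hence $\mor{\psi}{\mu}(\z)=\mor{c}{1/\alpha}(\z/\theta)+\rbra{\frac{1}{2\mu}-\frac{1}{2\mu\theta}}\norm{\z}^2$, and the two minimizers agree: $\prox{\mu\psi}(\z)=\prox{c/\alpha}(\z/\theta)$. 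It is classical that the Moreau envelope of a convex function is $C^1$ with gradient $\alpha(\bm{u}-\prox{c/\alpha}(\bm{u}))$. Applying the chain rule and simplifying, the $\z/(\mu\theta)$ terms cancel and we obtain $\nb\mor{\psi}{\mu}(\z)=\mu^{-1}(\z-\prox{\mu\psi}(\z))$. This gradient is continuous because the convex prox is.

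For (c), which is the only step requiring care, set $\bm{u}:=\z/\theta$ and $T:=\prox{c/\alpha}$. Then
\begin{equation}
\nb\mor{\psi}{\mu}(\z)=\mu^{-1}\rbra*{\theta\bm{u}-T(\bm{u})}.
\end{equation}
We use only that $T$ is firmly nonexpansive. Take two points, let $\Delta$ be the difference of the $\bm{u}$-values and $\bm{t}$ the difference of the corresponding $T$-values, so that $\norm{\bm{t}}^2\le\abra{\bm{t},\Delta}$. Then
\begin{equation}
\norm{\theta\Delta-\bm{t}}^2\le\theta^2\norm{\Delta}^2+(1-2\theta)\norm{\bm{t}}^2 .
\end{equation}
We split into two cases. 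If $\theta\ge 1/2$, the last term is nonpositive, so the bound is at most $\theta^2\norm{\Delta}^2$. Otherwise, using $\norm{\bm{t}}\le\norm{\Delta}$ gives at most $(1-\theta)^2\norm{\Delta}^2$. Converting back through $\Delta=(\z_1-\z_2)/\theta$ and the factor $\mu^{-1}$ gives the Lipschitz constant
\begin{equation}
\frac{\max\{\theta,1-\theta\}}{\mu\theta}=\max\cbra*{\mu^{-1},\frac{\eta_\psi}{1-\eta_\psi\mu}},
\end{equation}
which is exactly the constant stated in (c). We expect the main obstacle to be obtaining this sharp constant, rather than a cruder one such as $\mu^{-1}\theta^{-1}$. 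The case split on $\theta$ relative to $1/2$, together with firm (rather than plain) nonexpansiveness of $T$, is what makes the sharp bound work.
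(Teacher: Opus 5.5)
Your proof is correct, but it is not the paper's route. The paper gives no argument of its own for this Fact. It imports (a) from Rockafellar--Wets (Thm.~1.25) and (b)--(c) from Hoheisel et al.\ (Cor.~3.4), where these results are established in greater generality than the paper needs.

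You instead reduce everything to classical convex facts. Completing the square gives the exact identity $\mor{\psi}{\mu}(\z)=\mor{c}{1/\alpha}(\z/\theta)+\frac{1}{2\mu}(1-\theta^{-1})\norm{\z}^2$, together with $\prox{\mu\psi}(\z)=\prox{c/\alpha}(\z/\theta)$. The chain rule then yields (b); the $\z/(\mu\theta)$ terms do cancel as you claim. For (c), you write $\nb\mor{\psi}{\mu}(\z)=\mu^{-1}(\theta\bm{u}-T(\bm{u}))$ with $\bm{u}=\z/\theta$ and $T$ firmly nonexpansive. This gives $\norm{\theta\Delta-\bm{t}}\le\max\{\theta,1-\theta\}\norm{\Delta}$, which converts exactly into $\max\{\mu^{-1},\eta_\psi/(1-\eta_\psi\mu)\}$. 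Your subgradient-based lower bound in (a) is legitimate because $c$ is finite on $\R^n$ and hence subdifferentiable everywhere.

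The citations buy brevity and generality the paper does not need. Your argument buys three things:
\begin{enumerate}
\item a self-contained proof resting only on the convex Moreau envelope and firm nonexpansiveness of the convex prox;
\item a transparent explanation of where the sharp constant comes from: a scaled identity minus a firmly nonexpansive map, with the split at $\theta=1/2$;
\item a quantitative form of (a), namely $0\le\psi(\z)-\mor{\psi}{\mu}(\z)\le\norm{\bm{w}}^2/(2\alpha)=O(\mu)$, rather than bare convergence.
\end{enumerate}
Every $\psi$ to which the paper applies this Fact is real-valued, so the finiteness you rely on costs nothing here.
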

Note that for $f$ and $g$ in \cref{problem}, we can compute $\nb\mor{f}{\mu}$ and $\nb\mor{g}{\mu}$ in closed-forms because these functions are assumed to be prox-friendly (see \cref{problem} (a)(iii)). 

%% file: algorithm.tex
\section{Variable Smoothing Algorithm for \\ \hspace{1.5em} DC Composite Problem}
\subsection{Design of Smooth Surrogate Function}
\noindent In our algorithm, we use the smooth surrogate function  
\begin{equation}
    F^{\abra{\mu}} := (\mor{f}{\mu}-\mor{g}{\mu})\circ \mathfrak{S} \backin{\mu}{\rbra{0,\eta^{-1}}} \label{eq:surrogate function}
\end{equation}
of $F$ in place of the direct utilization of the nonsmooth function $F$.
The next theorem suggests how to find a DC composite critical point in \cref{eq:critical point} using the surrogate function $F^{\abra{\mu}}$.
\begin{theorem}[DC gradient sub-consistency]\label{thm:dc gradient sub-consistency}
  Suppose that a positive sequence $\seq{\mu}{k} \subset \rbra{0,\eta^{-1}} $ converges to $0$.
  For $F_k := F^{\abra{\mu_k}}\ (k\in\N)$ with \cref{eq:surrogate function} and any convergent sequence $\seq{\x}{k} \subset \R^d \to \exists \bar{\x} \in \R^d$, the following hold:
    \begin{enumerate}
      \item
      \begin{equation}
        \Limsup_{k\toinf} \nb F_k(\x_k) \subset \del_{\mathrm{L}} (f\circ\frakS)(\bar{\x}) - \del_{\mathrm{L}} (g\circ\frakS)(\bar{\x}).
      \end{equation}
      \item
      \begin{multline}\label{eq:bound of distance}
        \text{dist}\rbra[\Big]{\bm{0},\del_{\mathrm{L}}(f \circ \mathfrak{S})(\bar{\x})-\del_{\mathrm{L}}(g \circ \mathfrak{S})(\bar{\x})}\\
        \le \liminf_{k \to \infty}\norm{\nb F_k(\x_k)},
      \end{multline}
      where $\text{dist}(\bm{v},S):=\inf_{\bm{w}\in S}\norm{\bm{v}-\bm{w}}$ stands for the distance between a point $\bm{v}\in\R^d$ and a set $S\subset\R^d$.
    \end{enumerate}
\end{theorem}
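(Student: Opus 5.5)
The plan is to split the gradient along the DC structure, handle each piece by a gradient sub-consistency argument for a single weakly convex function composed with $\frakS$, and then recombine. By the chain rule and \cref{Properties of Moreau envelope}(b),
\begin{equation}
  \nb F_k(\x_k) = \D\frakS(\x_k)^T\nb\mor{f}{\mu_k}(\frakS(\x_k)) - \D\frakS(\x_k)^T\nb\mor{g}{\mu_k}(\frakS(\x_k)) =: \bm{p}_k - \bm{q}_k .
\end{equation}
The claim in (a) will follow from three facts: each of $(\bm{p}_k)$ and $(\bm{q}_k)$ is bounded, its cluster points lie in $\del_{\mathrm{L}}(f\circ\frakS)(\bar{\x})$ and $\del_{\mathrm{L}}(g\circ\frakS)(\bar{\x})$ respectively, and $\Limsup(\bm{p}_k-\bm{q}_k)\subset\Limsup\bm{p}_k-\Limsup\bm{q}_k$. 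The last inclusion is the elementary fact (cf. the argument recorded in \noteG) that holds when one sequence is bounded; it is applied with $-\bm{q}_k$.

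\emph{Boundedness.} Put $\z_k:=\frakS(\x_k)\to\bar{\z}:=\frakS(\bar{\x})$. Since $f$ is $L_f$-Lipschitz, the optimality condition of the prox gives
\begin{equation}
  \nb\mor{f}{\mu_k}(\z_k)=\mu_k^{-1}(\z_k-\prox{\mu_k f}(\z_k))\in\del_{\mathrm{L}} f(\prox{\mu_k f}(\z_k)),
\end{equation}
so $\norm{\nb\mor{f}{\mu_k}(\z_k)}\le L_f$. Also $\D\frakS(\x_k)$ is bounded because $\D\frakS$ is continuous and $\x_k$ converges. Hence $(\bm{p}_k)$ is bounded, and the same argument applies to $(\bm{q}_k)$.

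\emph{Cluster points.} Let $\bm{y}_k:=\prox{\mu_k f}(\z_k)$; then $\norm{\bm{y}_k-\z_k}=\mu_k\norm{\nb\mor{f}{\mu_k}(\z_k)}\le\mu_kL_f\to0$, so $\bm{y}_k\to\bar{\z}$. Along a subsequence, $\bm{w}_k:=\nb\mor{f}{\mu_k}(\z_k)\to\bar{\bm{w}}$. Because $\bm{w}_k\in\del f(\bm{y}_k)$ and $f$ is continuous, outer semicontinuity of the limiting subdifferential of the weakly convex function $f$ gives $\bar{\bm{w}}\in\del_{\mathrm{L}} f(\bar{\z})$. Then $\bm{p}_k\to\D\frakS(\bar{\x})^T\bar{\bm{w}}$, and the chain rule for a Lipschitz, subdifferentially regular outer function composed with a smooth map (\cite[Thm. 10.6]{rockafellar2009variational}, cf. the regularity noted in \cref{fac:equivalence of limiting and frechet subdifferential}) yields $\D\frakS(\bar{\x})^T\del_{\mathrm{L}} f(\bar{\z})\subset\del_{\mathrm{L}}(f\circ\frakS)(\bar{\x})$. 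Only the inclusion is needed, and it is the easy direction: the convex minorant inequality $f(\z)\ge f(\bar{\z})+\abra{\bar{\bm{w}},\z-\bar{\z}}-\tfrac{\eta_f}{2}\norm{\z-\bar{\z}}^2$ together with a Taylor expansion of $\frakS$ directly shows $\D\frakS(\bar{\x})^T\bar{\bm{w}}\in\del_{\mathrm{F}}(f\circ\frakS)(\bar{\x})$. I expect this step to be the main technical point, since it is where weak convexity and the smoothness of $\frakS$ are really used. The argument for $g$ is identical, and this proves (a).

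For (b), pass to a subsequence with $\norm{\nb F_{k}(\x_{k})}\to\liminf_k\norm{\nb F_k(\x_k)}$. If this liminf is $+\infty$ there is nothing to prove; in fact it cannot happen, because $\nb F_k(\x_k)$ is bounded by the above. By boundedness, a further subsequence gives $\nb F_{k}(\x_{k})\to\bm{v}$, and by (a) $\bm{v}\in\del_{\mathrm{L}}(f\circ\frakS)(\bar{\x})-\del_{\mathrm{L}}(g\circ\frakS)(\bar{\x})$. Therefore
\begin{equation}
  \mathrm{dist}\bigl(\bm{0},\del_{\mathrm{L}}(f\circ\frakS)(\bar{\x})-\del_{\mathrm{L}}(g\circ\frakS)(\bar{\x})\bigr)\le\norm{\bm{v}}=\liminf_k\norm{\nb F_k(\x_k)},
\end{equation}
which is (b).
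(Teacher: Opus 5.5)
Your proof is correct and has the same structure as the paper's. You split $\nb F_k(\x_k)$ into its $f$- and $g$-parts, use their boundedness together with the sum rule for outer limits, and obtain (b) from (a). The difference is that you prove inline the two results the paper cites: boundedness of the gradient sequences, and gradient sub-consistency for a single weakly convex function composed with $\frakS$ (via the prox optimality condition, outer semicontinuity of $\del_{\mathrm{L}} f$, and the weak-convexity minorant combined with a Taylor expansion of $\frakS$). You also replace the paper's appeal to \cite[Exe. 4.8]{rockafellar2009variational} with a direct subsequence argument.
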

\begin{proof}
  See Appendix B.
\end{proof}

\cref{thm:dc gradient sub-consistency} (b) implies that $\bar{\x}$ is a DC composite critical point in the sense of \cref{eq:critical point} if the right hand side of \cref{eq:bound of distance} is zero.
Hence, our goal of finding a DC composite critical point of \cref{problem} is reduced to designing an algorithm to generate a point sequence $\seq{\x}{k}$ such that $\liminf_{k \to \infty}\norm{\nb F_k(\x_k)}=0$.
To design such an algorithm, we introduce the following assumption.
Note that this assumption holds for the model \cref{eq:model with nonconvex} as will be stated in \cref{pro:sufficient condition for descent assumption}.
\begin{assumption}[Descent assumption]\label{asm:descent assumption}
  For $F$ in \cref{problem}, consider $F^{\abra{\cdot}}$ in \cref{eq:surrogate function}.
  There exist $\varpi_1,\varpi_2\in \R_{++}$ such that the following inequality holds for all $\x,\y \in \R^d$ and $\mu\in(0,(2\eta)^{-1}]$:
  \begin{equation}\label{eq:descent assumption}
    F^{\abra{\mu}}(\y) \le F^{\abra{\mu}}(\x) + \abra{\nb F^{\abra{\mu}}(\x), \y-\x} + \frac{\kappa_\mu}{2}\norm{\y-\x}^2,
  \end{equation}
  where $\kappa_\mu := \varpi_1 + \varpi_2\mu^{-1}$. (Note: we can implement the proposed algorithm, illustrated in \cref{subsec:Proposed Algorithm and Its Convergence Analysis}, without any knowledge on the value of $\varpi_1$ and $\varpi_2$.)
\end{assumption}

From the descent lemma (see, e.g.,\cite[Lem. 5.7]{beck2017first}), \cref{asm:descent assumption} is satisfied if $\nb F^{\abra{\mu}}$ is Lipschitz continuous with a Lipschitz constant $\varpi_{1}+\varpi_{2}\mu^{-1}$.
By exploiting this standard result, \cref{pro:sufficient condition for descent assumption} (b) below presents a sufficient condition for \cref{asm:descent assumption}.
However, this sufficient condition can not be applied to the proposed model \cref{eq:model with nonconvex} because $\frakS_{\text{RPR}}$ in \cref{eq:S for robust phase retrieval} is not Lipschitz continuous.
Instead, we show directly that the model \cref{eq:model with nonconvex} with $\varphi$ in \cref{tab:options of phi} satisfies \cref{asm:descent assumption} in \cref{pro:sufficient condition for descent assumption} (a).
\begin{proposition}[\fontsize{9.7pt}{10pt}\selectfont Sufficient conditions for \cref{asm:descent assumption}]\label{pro:sufficient condition for descent assumption}
  \leavevmode
  \begin{enumerate}
    \item For the model \cref{eq:model with nonconvex} with $\varphi=f-g$ chosen from \cref{tab:options of phi}, $F:=\Phi_3=(f-g)\circ\frakS_{\text{RPR}}$ with $\frakS_{\text{RPR}}$ in \cref{eq:S for robust phase retrieval} satisfies \cref{asm:descent assumption} with
    \begin{multline}
      \kappa_\mu:= 2L_g \sqrt{\sum_{i=1}^n \norm{\ba_i}^4} + 6\lambda \max_{1\le i \le n}\norm{\ba_i}^2\\
      + 4 \rbra[\Big]{\max_{1\le i \le n}\norm{\ba_i}^2|[\bb]_i|}\mu^{-1},
    \end{multline}
    where $\lambda > 0$ is a parameter of the MCP function, and $\lambda$ is understood as $1$ for the other $\varphi$ in \cref{tab:options of phi}.
    \item If $\frakS$ is $L_{\frakS}$-Lipschitz continuous, then \cref{asm:descent assumption} holds with $\kappa_\mu:=L_{\D \frakS}(L_f + L_g)+2L_{\frakS}^2 \mu^{-1}$. %
    (This is a variant of \cite[Prop. 4.2 (a)]{kume2024variableLong}.)
    \item
    Consider the cost function $h + F:=h+(f-g)\circ \frakS$ of the problem \cref{eq:problem with h} and its reformulation $\widehat{F}:= (\widehat{f}-\widehat{g})\circ\widehat{\frakS}$ in \cref{rem:Applicability of problem} (c).
    If $F^{\abra{\cdot}}$ satisfies \cref{asm:descent assumption} with $\kappa_\mu$, then $\widehat{F}^{\abra{\cdot}}$ also satisfies \cref{asm:descent assumption} with $\widehat{\kappa}_\mu:= L_{\nb h}+\kappa_\mu$
    with a Lipschitz constant $L_{\nb h}>0$ of $\nb h$.
  \end{enumerate}
\end{proposition}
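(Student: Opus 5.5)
The plan is to prove each part by reducing \cref{eq:descent assumption} to explicit calculus on the Moreau envelopes. Throughout I use \cref{Properties of Moreau envelope} and two standard facts for an $L$-Lipschitz weakly convex $\psi$. First, $\nb\mor{\psi}{\mu}(\z)\in\del_{\mathrm{L}}\psi(\prox{\mu\psi}(\z))$, so $\norm{\nb\mor{\psi}{\mu}}\le L$. Second, for $\mu\le(2\eta)^{-1}$ we have $\frac{\eta}{1-\eta\mu}\le 2\eta\le\mu^{-1}$, so $\nb\mor{\psi}{\mu}$ is $\mu^{-1}$-Lipschitz. The chain rule gives $\nb F^{\abra{\mu}}(\x)=\D\frakS(\x)^T(\nb\mor{f}{\mu}-\nb\mor{g}{\mu})(\frakS(\x))$.

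\textbf{Part (b).} I will show that $\nb F^{\abra{\mu}}$ is $\kappa_\mu$-Lipschitz and then invoke the descent lemma. Split $\nb F^{\abra{\mu}}(\x)-\nb F^{\abra{\mu}}(\y)$ by adding and subtracting $\D\frakS(\y)^T(\nb\mor{f}{\mu}-\nb\mor{g}{\mu})(\frakS(\x))$.
\begin{itemize}
  \item The first piece is bounded by $L_{\D\frakS}\norm{\x-\y}(L_f+L_g)$, using the gradient bounds $L_f$ and $L_g$.
  \item The second piece is bounded by $\norm{\D\frakS(\y)}_{\mathrm{op}}\cdot 2\mu^{-1}\norm{\frakS(\x)-\frakS(\y)}\le 2L_\frakS^2\mu^{-1}\norm{\x-\y}$, since Lipschitz continuity of $\frakS$ gives $\norm{\D\frakS}_{\mathrm{op}}\le L_\frakS$.
\end{itemize}

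\textbf{Part (c).} I first compute $\mor{\widehat f}{\mu}([\z^T,t]^T)=\mor{f}{\mu}(\z)+t-\mu/2$, by minimizing separately in the $t$-variable, and $\mor{\widehat g}{\mu}([\z^T,t]^T)=\mor{g}{\mu}(\z)$. Hence $\widehat F^{\abra{\mu}}=h+F^{\abra{\mu}}-\mu/2$. The constant does not affect \cref{eq:descent assumption}. Adding the descent lemma for $h$ (constant $L_{\nb h}$) to the assumed inequality for $F^{\abra{\mu}}$ gives $\widehat\kappa_\mu=L_{\nb h}+\kappa_\mu$.

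\textbf{Part (a).} This is the main part, and the obstacle is that $\frakS_{\text{RPR}}$ is not Lipschitz, so $\norm{\frakS(\y)-\frakS(\x)}^2$ is quartic in $\y-\x$. I will therefore treat the two DC parts differently. Set $\bm{v}:=\y-\x$ and note the exact expansion $\frakS(\y)=\frakS(\x)+\D\frakS(\x)\bm{v}+\bm{r}$ with $[\bm{r}]_i=\abra{\ba_i,\bm{v}}^2$.

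\emph{The $g$-part.} Since $\mor{g}{\mu}$ is convex for every $\varphi$ in \cref{tab:options of phi} (because $g$ is convex), $-\mor{g}{\mu}$ is concave. Thus $-\mor{g}{\mu}(\frakS(\y))\le-\mor{g}{\mu}(\frakS(\x))-\abra{\nb\mor{g}{\mu}(\frakS(\x)),\D\frakS(\x)\bm{v}+\bm{r}}$. The leftover term $-\abra{\nb\mor{g}{\mu}(\frakS(\x)),\bm r}$ is at most $L_g\norm{\bm r}\le L_g\sqrt{\sum_i\norm{\ba_i}^4}\,\norm{\bm v}^2$. This produces the first term of $\kappa_\mu$, and it works for the nonseparable trimmed $g$ as well, because only convexity and $L_g$ are used.

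\emph{The $f$-part.} In every row, $f=\lambda\norm{\cdot}_1$, so $\mor{f}{\mu}$ is a separable Huber function $\sum_i h_\mu([\z]_i)$ with $|h_\mu'|\le\lambda$, and $h_\mu''=\mu^{-1}$ on $\{|t|\le\lambda\mu\}$ and $0$ elsewhere. For each $i$, consider the scalar function $\theta_i(\tau):=h_\mu(\abra{\ba_i,\x+\tau\bm v}^2-[\bb]_i)$ and use the integral form of Taylor's theorem, which is valid since $h_\mu'$ is Lipschitz and $h_\mu''$ exists a.e. The key estimate is
\[
\theta_i''(\tau)=h_\mu''(\cdot)\,4\abra{\ba_i,\x+\tau\bm v}^2\abra{\ba_i,\bm v}^2+h_\mu'(\cdot)\,2\abra{\ba_i,\bm v}^2 .
\]
The first term is nonzero only when $\abra{\ba_i,\x+\tau\bm v}^2\le|[\bb]_i|+\lambda\mu$, so it is at most $(4|[\bb]_i|\mu^{-1}+4\lambda)\abra{\ba_i,\bm v}^2$. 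The second term is at most $2\lambda\abra{\ba_i,\bm v}^2$.

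Summing over $i$ bounds the $f$-part remainder by
\[
\tfrac12\textstyle\sum_i\bigl(6\lambda+4|[\bb]_i|\mu^{-1}\bigr)\abra{\ba_i,\bm v}^2 .
\]
Adding the $g$-part estimate, and noting that the linear terms combine exactly into $\abra{\nb F^{\abra{\mu}}(\x),\bm v}$, yields \cref{eq:descent assumption}. The remaining step I expect to be delicate is converting this coordinatewise sum into the stated constants $6\lambda\max_i\norm{\ba_i}^2$ and $4(\max_i\norm{\ba_i}^2|[\bb]_i|)\mu^{-1}$. My first attempt is to use $\abra{\ba_i,\bm v}^2\le\norm{\ba_i}^2\norm{\bm v}^2$ and check whether the sum can be replaced by a maximum. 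If it cannot in general, I would examine how the paper accounts for the factor coming from summing over the $n$ coordinates, or fall back to the slightly larger sum-type constant, which still satisfies \cref{asm:descent assumption} in the form $\varpi_1+\varpi_2\mu^{-1}$.
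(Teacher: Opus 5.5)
Your arguments for (b) and (c) are correct and essentially match the paper's. In (b), the paper quotes the Lipschitz constants $L_{\D\frakS}L_f+L_{\frakS}^2\mu^{-1}$ and $L_{\D\frakS}L_g+L_{\frakS}^2\mu^{-1}$ of $\nb(\mor{f}{\mu}\circ\frakS)$ and $\nb(\mor{g}{\mu}\circ\frakS)$ from the literature and adds them; your add-and-subtract estimate proves this directly. In (c), the paper does the same computation $\widehat F^{\abra{\mu}}=h+F^{\abra{\mu}}-\mu/2$ and then adds the descent lemma for $h$.

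In (a), your $g$-part (gradient inequality for the convex $\mor{g}{\mu}$ plus the exact remainder $[\bm r]_i=\abra{\ba_i,\bm v}^2$) is just the proof of the weak-convexity fact for $\psi\circ\mathcal{F}$ that the paper invokes. It yields the same term $2L_g\sqrt{\sum_i\norm{\ba_i}^4}$. Your $f$-part takes a different, more elementary route. The paper proves a two-sided Lipschitz bound $L_i:=6\lambda\norm{\ba_i}^2+4\norm{\ba_i}^2\abs{[\bb]_i}\mu^{-1}$ for each $\nb(\widehat r_{\lambda,\mu}\circ\frakS_{\text{RPR},i})$ via an auxiliary lemma, then applies the descent lemma. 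That lemma cuts $[\x,\y]$ at the first and last points where $\abra{\ba_i,\cdot}^2-[\bb]_i\ge\lambda\mu$, uses that the Huber function is affine there, and adds a mean-value inequality. Your integral-Taylor bound on $\theta_i''$ gives directly the one-sided quadratic bound the descent inequality needs, with the same per-summand constants.

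The step you flag as delicate cannot be completed, because the constant stated in (a) is false in general. Take $d=1$, $n=2$, $\ba_1=\ba_2=1$, $\bb=(1,1)^T$ and the capped $\ell_1$ loss. Where $\abs{x^2-1}\le\min\{\mu,\beta\}$, we have $\mor{g}{\mu}(\frakS_{\text{RPR}}(x))=0$ (since $0\le\mor{g}{\mu}\le g=0$ there) and $\mor{f}{\mu}(\frakS_{\text{RPR}}(x))=(x^2-1)^2/\mu$. So $F^{\abra{\mu}}$ is $C^2$ near $x=1$, with second derivative $8\mu^{-1}$ at $x=1$. Any admissible $\kappa_\mu$ must therefore be at least $8\mu^{-1}$. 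The stated one is $2\sqrt{2}L_g+6+4\mu^{-1}$, which is smaller for all small $\mu$. Concretely, with $\eta=\beta=1$, $L_g=\sqrt{2}$, $\mu=0.1$, $x=1$, $y=1.01$, the left side of the descent inequality is about $4.0\times10^{-3}$ and the right side is $2.5\times10^{-3}$. With $n$ identical rows, the required $\mu^{-1}$-coefficient is $n$ times the claimed one. So the sum over $i$ is genuinely needed, and your sum-type constant can be up to a factor $n$ larger, not ``slightly'' larger.

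The paper's own proof reaches the maximum only through an invalid step. It bounds $\norm{\nb(\mor{f}{\mu}\circ\frakS_{\text{RPR}})(\x)-\nb(\mor{f}{\mu}\circ\frakS_{\text{RPR}})(\y)}^2$ by $\sum_{i=1}^n L_i^2([\x]_i-[\y]_i)^2$. This treats the $i$-th summand $\nb(\widehat r_{\lambda,\mu}\circ\frakS_{\text{RPR},i})$ as if it were the $i$-th coordinate of a separable map. In fact each summand is a full vector in $\R^d$, and $[\x]_i$ is not even defined for $i>d$. Combining the per-summand bounds correctly, by the triangle inequality, gives $\sum_i L_i$, which is exactly your fallback.

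So commit to the fallback. Your argument proves (a) with $\kappa_\mu=2L_g\sqrt{\sum_i\norm{\ba_i}^4}+6\lambda\norm{A}_{\mathrm{op}}^2+4\mu^{-1}\norm{B^{1/2}A}_{\mathrm{op}}^2$, where $B:=\mathrm{diag}(\abs{[\bb]_1},\dots,\abs{[\bb]_n})$. It also proves (a) with the cruder $2L_g\sqrt{\sum_i\norm{\ba_i}^4}+\sum_i(6\lambda+4\abs{[\bb]_i}\mu^{-1})\norm{\ba_i}^2$. Both are of the form $\varpi_1+\varpi_2\mu^{-1}$, so the descent assumption for the model, and everything downstream of it, survives. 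Only the explicit constant in (a) has to be corrected.
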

\begin{proof}
  See Appendix C.
\end{proof}
\subsection{Proposed Algorithm and Its Convergence Analysis}\label{subsec:Proposed Algorithm and Its Convergence Analysis}
\noindent We propose \cref{algorithm} based on the gradient descent method of the smoothed surrogate function $F_k:=F^{\abra{\mu_k}}$.

We design $\seq{\mu}{k}\subset(0,(2\eta)^{-1}]$ to satisfy the following condition (introduced in \cite{kume2024variableLong}) so as to establish a convergence analysis of \cref{algorithm}:
\begin{equation}\label{eq:conditions of mu}
  \left\{
    \begin{aligned}
      &\text{(i) }\textstyle\lim_{k\toinf}\mu_k = 0, \quad \text{(ii) }\textstyle\sum\nolimits_{k=1}^{\infty}\mu_k = \infty,\\
      &\text{(iii) }(\forall k \in \N)\ \mu_k\ge\mu_{k+1}.
    \end{aligned}
  \right.
\end{equation}
For example, $\mu_k:=(2\eta)^{-1}k^{-\inv{\alpha}}$ with $\alpha\ge1$ enjoys the condition \cref{eq:conditions of mu}
($\alpha=3$ is reported to be an appropriate value for a reasonable convergence rate of a special case of \cref{algorithm} with $g\equiv 0$ \cite{bohm2021variable,kume2024variableLong}).
\begin{algorithm}[t]
  \caption{Variable smoothing algorithm for\\ DC composite type problem (\cref{problem})}
  \label{algorithm}
  \begin{algorithmic}[1]
      \Require $\x_1 \in \R^d,\seq{\mu}{k}\subset(0,(2\eta)^{-1}]$ enjoying \cref{eq:conditions of mu}.
      \For{$k=1,2,3,\dots$}
      \State Set $F_k:= F^{\abra{\mu_k}}=\rbra*{\mor{f}{\mu_k} - \mor{g}{\mu_k}} \circ \mathfrak{S}$
      \State Obtain $\gamma_k$ by \cref{backtracking}
      \State $\x_{k+1} \leftarrow \x_k - \gamma_k\nb F_k(\x_k)$
      \EndFor
  \end{algorithmic}
\end{algorithm}
\begin{algorithm}[t]
  \caption{Backtracking algorithm to find $\gamma_k$}
  \label{backtracking}
  \begin{algorithmic}[1]
    \Require $\gamma_{\text{init},k} \in \R_{++}$ (see \cref{exa:initial guess satisfying assumption} for its choices),
    \Statex \hspace{\algorithmicindent} $\rho\in(0,1),\ c\in(0,1)$
    \State $\tilde{\gamma} \leftarrow \gamma_{\text{init},k}$
    \While{the condition \cref{eq:armijo condition} with $\gamma:=\tilde{\gamma}$ is false}
    \State $\tilde{\gamma} \leftarrow \rho\tilde{\gamma}$
    \EndWhile
    \Ensure $\gamma_k:=\tilde{\gamma}$
  \end{algorithmic}
\end{algorithm}

We employ \cref{backtracking} in order to obtain a stepsize $\gamma_k$ enjoying the following \textit{Armijo condition} with $\gamma:=\gamma_k$:
\begin{equation}\label{eq:armijo condition}
  F_k(\x_k-\gamma \nb F_k(\x_k)) \le F_k(\x_k) - c\gamma\norm{\nb F_k(\x_k)}^2.
\end{equation}
\cref{backtracking} is called the \textit{backtracking algorithm}, and it has been utilized as a standard stepsize selection for smooth optimization (see, e.g., \cite{andrei2020nonlinear}).
The while-loop in \cref{backtracking} terminates after a finite number of iterations as follows.
\begin{lemma}[Properties of \cref{backtracking}]\label{lem:finite termination of backtracking}
  Consider \cref{problem} under \cref{asm:descent assumption}, and \cref{algorithm} with arbitrary inputs $\x_1\in\R^d$ and $\seq{\mu}{k}\subset (0,(2\eta)^{-1}]$.
  With any inputs $(\gamma_{\text{init},k},\rho,c)\in \R_{++}\times(0,1)\times(0,1)$, \cref{backtracking} for estimating $\gamma_k$ satisfies the following properties:
  \begin{enumerate}
    \item \cref{backtracking} outputs a stepsize $\gamma_k$ enjoying the Armijo condition \cref{eq:armijo condition} with $\gamma:= \gamma_{k}$ and
    \begin{equation}\label{eq:lower bound of stepsize}
      \gamma_k \ge \min \cbra*{\gamma_{\text{init},k},2(1-c)\kappa_{\mu_k}^{-1}\rho}
    \end{equation}
    (see \cref{asm:descent assumption} for the definition of $\kappa_{\mu_k}$).
    \item The while-loop in \cref{backtracking} is guaranteed to terminate after at most $\max\cbra[\big]{0,\big\lceil \log_{\rho}\rbra[\big]{2(1-c)\kappa_{\mu_k}^{-1}\gamma_{\text{init},k}^{-1}} \big\rceil}$ iterations,
     where $\lceil \cdot \rceil$ denotes the ceiling function.
  \end{enumerate}
\end{lemma}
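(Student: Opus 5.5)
The plan is to use \cref{asm:descent assumption} with $\y:=\x_k-\gamma\nb F_k(\x_k)$ to show that the Armijo condition \cref{eq:armijo condition} holds for every sufficiently small trial stepsize. The lower bound and the iteration count then follow from the geometric decrease of $\tilde{\gamma}$.

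First, fix $k$ and abbreviate $\bm{g}_k:=\nb F_k(\x_k)$. Since $\mu_k\in(0,(2\eta)^{-1}]$, the assumption \cref{eq:descent assumption} is applicable with $\mu:=\mu_k$. Taking $\x:=\x_k$ and $\y:=\x_k-\gamma\bm{g}_k$ gives, for every $\gamma>0$,
\begin{equation}
F_k(\x_k-\gamma\bm{g}_k)\le F_k(\x_k)-\gamma\rbra*{1-\tfrac{\kappa_{\mu_k}}{2}\gamma}\norm{\bm{g}_k}^2 .
\end{equation}
Whenever $\gamma\le 2(1-c)\kappa_{\mu_k}^{-1}$, we have $1-\tfrac{\kappa_{\mu_k}}{2}\gamma\ge c$, so \cref{eq:armijo condition} holds. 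If $\bm{g}_k=\bm{0}$, the condition holds trivially for any $\gamma$ and the loop exits immediately.

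Second, the while-loop tests $\tilde{\gamma}=\gamma_{\text{init},k}\rho^{j}$ for $j=0,1,2,\dots$ and stops at the first $j$ for which \cref{eq:armijo condition} holds. By the first step, it stops no later than the first $j$ with $\gamma_{\text{init},k}\rho^{j}\le 2(1-c)\kappa_{\mu_k}^{-1}$. If $\gamma_{\text{init},k}$ already satisfies this, then $j=0$ and $\gamma_k=\gamma_{\text{init},k}$.

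Otherwise, take the terminating index $j\ge1$. The previous trial $\gamma_{\text{init},k}\rho^{j-1}$ failed the Armijo condition, so by the first step it exceeds $2(1-c)\kappa_{\mu_k}^{-1}$. Hence $\gamma_k=\rho\cdot\gamma_{\text{init},k}\rho^{j-1}>2(1-c)\kappa_{\mu_k}^{-1}\rho$. Combining the two cases yields \cref{eq:lower bound of stepsize}, and the output satisfies \cref{eq:armijo condition} by construction, which proves (a).

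For (b), solve $\gamma_{\text{init},k}\rho^{j}\le 2(1-c)\kappa_{\mu_k}^{-1}$ for $j$. Because $\log_\rho$ is decreasing when $\rho\in(0,1)$, this is equivalent to $j\ge\log_\rho\rbra*{2(1-c)\kappa_{\mu_k}^{-1}\gamma_{\text{init},k}^{-1}}$. The smallest admissible nonnegative integer is the stated $\max\{0,\lceil\cdot\rceil\}$, and the number of loop iterations equals the terminating $j$, which is at most this value.

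There is no real obstacle here. The only care needed is the direction of the inequality under $\log_\rho$, and the edge cases $\bm{g}_k=\bm{0}$ and $j=0$.
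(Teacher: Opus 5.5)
Your proposal is correct and follows the paper's proof: both apply the descent assumption with $\y:=\x_k-\gamma\nb F_k(\x_k)$ to show that every sufficiently small trial stepsize satisfies the Armijo condition, and both read off (a) and (b) from the geometric decrease of $\tilde{\gamma}$. You also write out the case split $j=0$ versus $j\ge 1$ that the paper leaves implicit, and you use the closed threshold $\gamma\le 2(1-c)\kappa_{\mu_k}^{-1}$ where the paper uses the open one, so your ceiling count in (b) holds exactly even when $\log_\rho(\cdot)$ is an integer.
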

\begin{proof}
  For any $\gamma\in(0,2(1-c)\kappa_{\mu_k}^{-1})$, it follows from \cref{eq:descent assumption} with $(\x,\y,\mu):=(\x_k,\x_k-\gamma \nb F_k(\x_k),\mu_k)$ that
      $F_k(\x_k-\gamma \nb F_k(\x_k))
      \le F_k(\x_k) + \gamma\rbra*{2^{-1}\gamma\kappa_{\mu_k} -1} \norm{\nb F_k(\x_k)}^2
      \le F_k(\x_k) - c\gamma\norm{\nb F_k(\x_k)}^2$.
  Hence, \cref{backtracking} terminates when $\gamma_k$ becomes less than $2(1-c)\kappa_{\mu_k}^{-1}$ at the latest,
  which leads to both (a) and (b).
\end{proof}

For our convergence analysis, we choose initial guesses $(\gamma_{\text{init},k})_{k=1}^\infty$ of \cref{backtracking} enjoying the next assumption.
\begin{assumption}[Condition for initial guesses $(\gamma_{\text{init},k})_{k=1}^\infty$]\label{asm:initial guess of backtracking}
  Consider \cref{problem} under \cref{asm:descent assumption}.
  For an input $\seq{\mu}{k}\subset (0,(2\eta)^{-1}]$ of \cref{algorithm} and initial guesses $(\gamma_{\text{init},k})_{k=1}^\infty \subset \R_{++}$ in \cref{backtracking}, the following holds:
  \begin{equation}\label{eq:assumption on initial guess of backtracking}
    (\exists \delta>0, \forall k\in\N)\quad \gamma_{\text{init},k} \ge \delta \kappa_{\mu_k}^{-1},
  \end{equation}
  where $\kappa_{\mu_k}$ is given in \cref{asm:descent assumption}.
  (Note: any knowledge on the value of $\delta$ is not required in \cref{algorithm,backtracking}.)
\end{assumption}

\begin{example}[$(\gamma_{\text{init},k})_{k=1}^\infty$ achieving \cref{asm:initial guess of backtracking}]\label{exa:initial guess satisfying assumption}
  \leavevmode
  The following choices of initial guesses $(\gamma_{\text{init},k})_{k=1}^\infty$ achieve \cref{asm:initial guess of backtracking} (see Appendix D for the proof).
  \begin{enumerate}
    \item (Constant multiple of $\kappa_{\mu_k}^{-1}$)
    We can choose $\gamma_{\text{init},k}:=2(1-c)\kappa_{\mu_k}^{-1}\ (k\in\N)$ 
    in a case where the value $\kappa_{\mu_{k}}$ is known (see \cref{pro:sufficient condition for descent assumption}).
    \cref{backtracking} with this $\gamma_{\text{init},k}$ does not execute the while-loop and outputs $\gamma_k:=\gamma_{\text{init},k}$
    because $\gamma_{\text{init},k}$ is already small enough to satisfy the Armijo condition \cref{eq:armijo condition}
    (see \cref{lem:finite termination of backtracking} (b)).
    \item (Constant value)
    We can also choose a constant value $\gamma_\text{init}\in\R_{++}$ for $\gamma_{\text{init},k}$.
    With this choice, the stepsize $\gamma_k$ is selected adaptively through the while-loop in \cref{backtracking}.
    In practice, the resulting stepsize tends to be larger than $2(1-c)\kappa_{\mu_k}^{-1}$.
    Consequently, compared with the choice $\gamma_{\text{init},k}:=2(1-c)\kappa_{\mu_k}^{-1}$ described in (a),
    this strategy may lead to faster convergence of \cref{algorithm}.
    \item (Stepsize used in previous iteration)
    We can also use $\gamma_{\text{init},k}:=\gamma_{k-1}$, that is, the stepsize used in the $(k-1)$-th iteration of \cref{algorithm}, where $\gamma_0\in\R_{++}$ is a given constant.
    With this choice, the while-loop in \cref{backtracking} may terminate in fewer iterations than in the case $\gamma_{\text{init},k}:=\gamma_{\text{init}}$ in (b).
    In our experiment in \cref{sec:experiment}, we adopted this initial guess because
    it empirically yields shorter convergence time of \cref{algorithm} than other choices of $\gamma_{\text{init},k}$ in (a) and (b).
    
  \end{enumerate}
\end{example}

Under \cref{asm:descent assumption,asm:initial guess of backtracking}, we present below a convergence theorem for \cref{algorithm}.
\begin{theorem}[Convergence theorem]\label{thm:convergence theorem}
  Consider \cref{problem} under \cref{asm:descent assumption}.
  Let $\seq{\mu}{k} \subset (0,(2\eta)^{-1}]$ and $(\gamma_{\text{init},k})_{k=1}^\infty \subset \R_{++}$ satisfy \cref{eq:conditions of mu} and \cref{asm:initial guess of backtracking},
  while the remaining inputs $(\x_1,\rho, c)\in\R^d\times (0,1)\times(0,1)$ of \cref{algorithm,backtracking} are arbitrarily chosen.
  For the function sequence $\seq{F}{k}$ and the point sequence $\seq{\x}{k}$ produced by \cref{algorithm}, the following hold:
  \begin{enumerate}
    \item
    For any $\underline{k},\bar{k}\in \N$ such that $\underline{k}\le\bar{k}$, we have
    \begin{equation}
      \min_{\underline{k}\le k \le \bar{k}}\norm{\nb F_k(\x_k)} \le \sqrt{\frac{C}{\sum_{k=\underline{k}}^{\bar{k}}\mu_k}},\label{eq:lemma for convergence analysis}
    \end{equation}
    where $C\in\R_{++}$ is a constant.
    \item
    \begin{equation}\label{eq:convergence theorem}
      \liminf_{k \to \infty}\norm{\nb F_k(\x_k)} = 0. 
    \end{equation}
    \item
    We can choose a subsequence $(\x_{m(l)})_{l=1}^{\infty}$ such that $\lim_{l \to \infty}\norm{\nb F_{m(l)}(\x_{m(l)})} = 0$,
    where $\map{m}{\N}{\N}$ is monotonically increasing. %
    Moreover, every cluster point of $(\x_{m(l)})_{l=1}^{\infty}$ is a DC composite critical point of \cref{problem}.
  \end{enumerate}
\end{theorem}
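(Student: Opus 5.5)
The plan is to follow the standard variable smoothing argument of \cite{bohm2021variable,kume2024variableLong}, adapted to the DC surrogate $F_k=(\mor{f}{\mu_k}-\mor{g}{\mu_k})\circ\frakS$. The key ingredients are (i) a per-iteration decrease from the Armijo condition \cref{eq:armijo condition}, (ii) a lower bound on $\gamma_k$ proportional to $\mu_k$, and (iii) control of the change $F_{k+1}(\x_{k+1})-F_k(\x_{k+1})$ caused by decreasing $\mu$.

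\textbf{Step 1: stepsize lower bound.} By \cref{lem:finite termination of backtracking}(a) and \cref{asm:initial guess of backtracking},
\[
\gamma_k\ge \min\{\delta,2(1-c)\rho\}\,\kappa_{\mu_k}^{-1}.
\]
Since $\mu_k\le(2\eta)^{-1}$, we have $\kappa_{\mu_k}=\varpi_1+\varpi_2\mu_k^{-1}\le(\varpi_1(2\eta)^{-1}+\varpi_2)\mu_k^{-1}$. Hence $\gamma_k\ge \tau\mu_k$ for a constant $\tau>0$. The Armijo condition then gives
\[
F_k(\x_{k+1})\le F_k(\x_k)-c\tau\mu_k\norm{\nb F_k(\x_k)}^2.
\]

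\textbf{Step 2: effect of changing $\mu$ (main obstacle).} Here we need a bound of the form $F_{k+1}(\x)\le F_k(\x)+M(\mu_k-\mu_{k+1})$ uniformly in $\x$. For an $L$-Lipschitz, weakly convex $\psi$, the map $\mu\mapsto\mor{\psi}{\mu}(\z)$ is nonincreasing, and $0\le\psi(\z)-\mor{\psi}{\mu}(\z)\le \frac{L^2\mu}{2}$. I would sharpen this to
\[
0\le \mor{\psi}{\mu_{k+1}}(\z)-\mor{\psi}{\mu_k}(\z)\le \tfrac{L^2}{2}(\mu_k-\mu_{k+1}).
\]
This follows since $\frac{d}{d\mu}\mor{\psi}{\mu}(\z)=-\frac12\norm{\nb\mor{\psi}{\mu}(\z)}^2$ and $\norm{\nb\mor{\psi}{\mu}}\le L$, the latter because $\nb\mor{\psi}{\mu}(\z)\in\del\psi(\prox{\mu\psi}(\z))$. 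Applying this to $f$ (upward change) and to $g$ (with the sign flipped, $-\mor{g}{\mu}$ decreases, which helps) gives
\[
F_{k+1}(\x)\le F_k(\x)+\tfrac{L_f^2}{2}(\mu_k-\mu_{k+1}).
\]
The care needed is in justifying the derivative formula, or its integrated form, for weakly convex $\psi$. Alternatively one can argue directly: from the definition,
\[
\mor{\psi}{\mu'}(\z)\le \psi(\bm p)+\tfrac{1}{2\mu'}\norm{\bm p-\z}^2,\qquad \bm p=\prox{\mu\psi}(\z),
\]
and then use $\norm{\bm p-\z}\le \mu L$.

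\textbf{Step 3: telescoping.} Combining Steps 1 and 2 yields
\[
c\tau\mu_k\norm{\nb F_k(\x_k)}^2\le F_k(\x_k)-F_{k+1}(\x_{k+1})+\tfrac{L_f^2}{2}(\mu_k-\mu_{k+1}).
\]
Summing from $\underline{k}$ to $\bar{k}$, the right-hand side is at most $F_{\underline{k}}(\x_{\underline{k}})-F_{\bar{k}+1}(\x_{\bar{k}+1})+\frac{L_f^2}{2}\mu_1$. Upper and lower bounds follow from the envelope estimates:
\[
F_k(\x)\ge F(\x)-\tfrac{L_f^2}{2}\mu_k\ge \inf F-\tfrac{L_f^2}{4\eta},
\]
and $F_{\underline{k}}(\x_{\underline{k}})\le F_1(\x_1)+\frac{L_f^2}{2}\mu_1$ by the monotone telescoping above. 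This gives a constant $C$ independent of $\underline{k},\bar{k}$, and then
\[
\min_{\underline{k}\le k\le\bar{k}}\norm{\nb F_k(\x_k)}^2\sum_{k=\underline{k}}^{\bar{k}}\mu_k\le C,
\]
which is (a).

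\textbf{Steps 4 and 5: conclusions (b) and (c).} For (b), by \cref{eq:conditions of mu}(ii), for every $\underline{k}$ the sum $\sum_{k\ge\underline{k}}\mu_k=\infty$, so (a) forces $\inf_{k\ge\underline{k}}\norm{\nb F_k(\x_k)}=0$ for all $\underline{k}$, i.e., $\liminf_{k\to\infty}\norm{\nb F_k(\x_k)}=0$. For (c), extract indices $m(l)$ with $\norm{\nb F_{m(l)}(\x_{m(l)})}\to0$. For any cluster point $\bar{\x}$, take a further convergent subsequence and apply \cref{thm:dc gradient sub-consistency}(b) to the sequence $(\mu_{m(l)})$, which still tends to $0$. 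The distance is then $0$. Since $\del_{\mathrm L}(f\circ\frakS)(\bar{\x})-\del_{\mathrm L}(g\circ\frakS)(\bar{\x})$ is closed, this gives $\bm 0$ in that set, i.e., $\bar{\x}$ is a DC composite critical point. Closedness holds because both limiting subdifferentials are compact here, by local Lipschitzness of $f\circ\frakS$ and $g\circ\frakS$. Alternatively, one can use part (a) of \cref{thm:dc gradient sub-consistency} directly, since $\bm 0$ lies in the $\Limsup$.
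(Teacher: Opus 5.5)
Your proposal is correct and is essentially the paper's proof. It uses the same ingredients: the Armijo decrease together with $\gamma_k\ge\min\{\delta,2(1-c)\rho\}\kappa_{\mu_k}^{-1}\ge\tau\mu_k$; a uniform bound on $F_{k+1}-F_k$ coming from the monotonicity of $\mu\mapsto\mor{\psi}{\mu}$ (your constant $L_f^2/2$ is sharper than the $L_f^2$ the paper cites from \cite{kume2024variableLong}); telescoping against $\inf F$; and \cref{thm:dc gradient sub-consistency} for (c), where your closedness/$\Limsup$ remark makes explicit a step the paper leaves implicit.

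One caution concerns Step~2. Justify it by the derivative/integrated argument rather than by your ``direct'' alternative. Taking $\bm p=\prox{\mu_k\psi}(\z)$ only yields $\mor{\psi}{\mu_{k+1}}(\z)-\mor{\psi}{\mu_k}(\z)\le\frac{\mu_k}{\mu_{k+1}}\cdot\frac{L^2}{2}(\mu_k-\mu_{k+1})$. The ratio $\mu_k/\mu_{k+1}$ is not controlled by \cref{eq:conditions of mu}, so the telescoped sum could then diverge.
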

\begin{proof}
  See Appendix E.
\end{proof}

%% file: experiment.tex
\section{Numerical experiment}\label{sec:experiment}
\noindent We conducted numerical experiments in order to evaluate estimation performance of our robust phase retrieval method based on the proposed model \cref{eq:model with nonconvex} and \cref{algorithm}.
In our experiments, we adopted the capped $\ell_1$ norm and the trimmed $\ell_1$ norm as the DC loss function $\varphi$ in the model \cref{eq:model with nonconvex} (see \cref{tab:options of phi} for the expressions of $\varphi$), where several choices of parameters $\beta$ and $K$ were tested.
In \cref{algorithm}, we used $\mu_k:=k^{-1/3}\ (k\in\N)$ for the parameters of the Moreau envelope.
To compute the stepsize $\gamma_k$ via \cref{backtracking}, we used $(\rho,c):=(0.8,0.0001)$, which is a typical choice in smooth optimization \cite{andrei2020nonlinear}.
As stated in \cref{exa:initial guess satisfying assumption} (c), we employed $\gamma_{\text{init},k}:=\gamma_{k-1}\ (k\in\N)$, where $\gamma_0$ was set to $\max\{1,\norm{\nb F_1(\x_1)}^{-1}\}$.

For comparison, we also employed state-of-the-art existing methods \cite{zheng2024new,kim2024robust} based on the $\ell_1$ loss function.
The method in \cite{zheng2024new} applies the \textit{inexact proximal linear (IPL) algorithm} to the $\ell_1$ loss-based model \cref{eq:model with l1}.
In IPL, the $(k+1)$-th estimate $\x_{k+1}\in\R^d$ is obtained as an inexact solution of a subproblem 
\begin{equation}\label{eq:subproblem for ipl}
    \eqsize{0.95}{\minimize{\x}{\R^d}} \eqsize{0.92}{\norm{\frakS_{\text{RPR}}(\x_k) + \D \frakS_{\text{RPR}}(\x_k)[\x-\x_k]}_1 + \inv{2 t}\norm{\x-\x_k}^2,}
\end{equation}
which is derived by linearizing $\frakS_{\text{RPR}}$ in \cref{eq:model with l1} at $\x_k$ and adding a quadratic term with $t\in\R_{++}$.
This subproblem is solved by \textit{fast iterative shrinkage-thresholding algorithm (FISTA)} with two possible stopping criteria named \textit{(LACC)} and \textit{(HACC)}  (see \cite[Alg. 2]{zheng2024new}).
In our experiments, we adopted (HACC) because \cite[Fig. 2]{zheng2024new} demonstrates that IPL with (HACC) empirically yields a slightly higher \textit{success rate} than IPL with (LACC) (for the definition of success rate, see the sentence just after \cref{eq:definition of success}).
To implement IPL, we used the code released by the author.\noteD
The other competing method \cite{kim2024robust} applies \textit{robust alternating minimization (Robust-AM)} to the modified $\ell_1$ loss-based model \cref{eq:modified model with l1}.
Robust-AM is derived as a Gauss-Newton method for \cref{eq:modified model with l1}, where its estimate sequence is iteratively updated by
\begin{equation}\label{eq:subproblem for robust-am}
    \x_{k+1}\in \argmin{\x\in\R^d} \sum_{i=1}^{n} \abs*{\abra{\ba_i,\x}-\sign(\abra{\ba_i,\x_k})\sqrt{|[\bb]_i|}}.
\end{equation}
For solving this subproblem, two methods, \textit{ADMM-LAD} and \textit{ADMM-LP}, are introduced in \cite{kim2024robust}.
Robust-AM with ADMM-LAD is reported to achieve almost the same success rate as one with ADMM-LP while exhibiting significantly faster convergence (see \cite[Fig. 2]{kim2024robust}).
Hence, we employed ADMM-LAD in our experiments, as in the main experiments in \cite{kim2024robust}. 
We implemented Robust-AM by using the code provided by the author.\noteE

For the initial point of \cref{algorithm}, IPL, and Robust-AM, we generated common $\x_1\in\R^d$ by an existing initialization method \cite[Alg. 3]{duchi2019solving}, which was also used in the experiment in \cite{zheng2024new}.
(This initialization is also mentioned in \cite{kim2024robust} as being consistent with its convergence analysis \cite[Thm. 4.1]{kim2024robust}.)
We terminated each algorithm when one of the following conditions was met: (i) the relative change in the the cost function value satisfied $\frac{\abs{\Phi_j(\x_{k})-\Phi_j(\x_{k-1})}}{|\Phi_j(\x_{k-1})|}<10^{-7}\ (j=1,2,3)$
where $\x_k$ is the $k$-th estimate generated by each algorithm, and $\Phi_1$, $\Phi_2$ and $\Phi_3$ is the cost functions in \cref{eq:model with l1}, \cref{eq:modified model with l1}, and  \cref{eq:model with nonconvex}, respectively, used in IPL, Robust-AM, and  \cref{algorithm};
(ii) the number of iterations reached to 10000;
(iii) the running CPU time exceeded 30 seconds.
All experiments were performed by MATLAB on MacBook Pro (Apple M3, 16GB).

The problem settings, partially inspired by \cite{zheng2024new}, are as follows.
We drew each entry of $A\in\R^{n\times d}$ from the normal distribution $\mathcal{N}(0,1)$.
Each entry of the target signal $\x^\star \in \R^d$ was chosen from $1$ or $-1$ with a probability of $0.5$ respectively.
The additive noise $\varepsilon_i\in\R$ was generated by $\mathcal{N}(0,10^{-6})$.
The index set $\mathcal{I}_{\text{out}}$ was selected uniformly at random with fixed cardinality $\#\mathcal{I}_{\text{out}}$.
Here, let $p_{\text{fail}}:=\#\mathcal{I}_{\text{out}}/n$ denote the proportion of outliers.
Each outlier value $\xi_i\in\R\ (i\in\mathcal{I}_{\text{out}})$ was generated from (i) the (absolute) Cauchy distribution or (ii) the uniform distribution.
More specifically, 
each $\xi_i$ was given by (i) $\xi_i := s \mathcal{M} \tan\rbra{0.5\pi u_i}$ or (ii) $\xi_i := s \mathcal{M} u_i$, where $u_i$ was drawn from the uniform distribution of $[0,1]$,
$\mathcal{M}:= \max_{1\le i \le n}\abra{\ba_i,\x^\star}^2$ was a constant, and a parameter $s\in\R_{++}$ was used to control the scale of outliers.
(Since the results were similar for both types of outliers, 
we present only the results for Cauchy outliers in this section, while those for uniformly distributed outliers are provided in the supplementary material.)
For every fixed $d,n,\#\mathcal{I}_{\text{out}}$, and $s$, we performed estimation on 50 random problem instances obtained by varying $\x^\star, A, \varepsilon_i,u_i$, and elements of $\mathcal{I}_{\text{out}}$.
We judged that an estimation succeeded if the relative error at the final estimate $\x^\diamond \in \R^d$ achieved
\begin{equation}\label{eq:definition of success}
   \min\{\|\x^\star - \x^\diamond\|, \|\x^\star + \x^\diamond\|\}/\|\x^\star\| < 10^{-3}.
\end{equation}
As in \cite{zheng2024new}, we used an estimation performance criterion called “success rate” that is the percentage of the successful estimation out of 50 estimations.

\begin{figure*}[t]
\centering

\begin{subfigure}{0.23\textwidth}
\includegraphics[width=\linewidth]{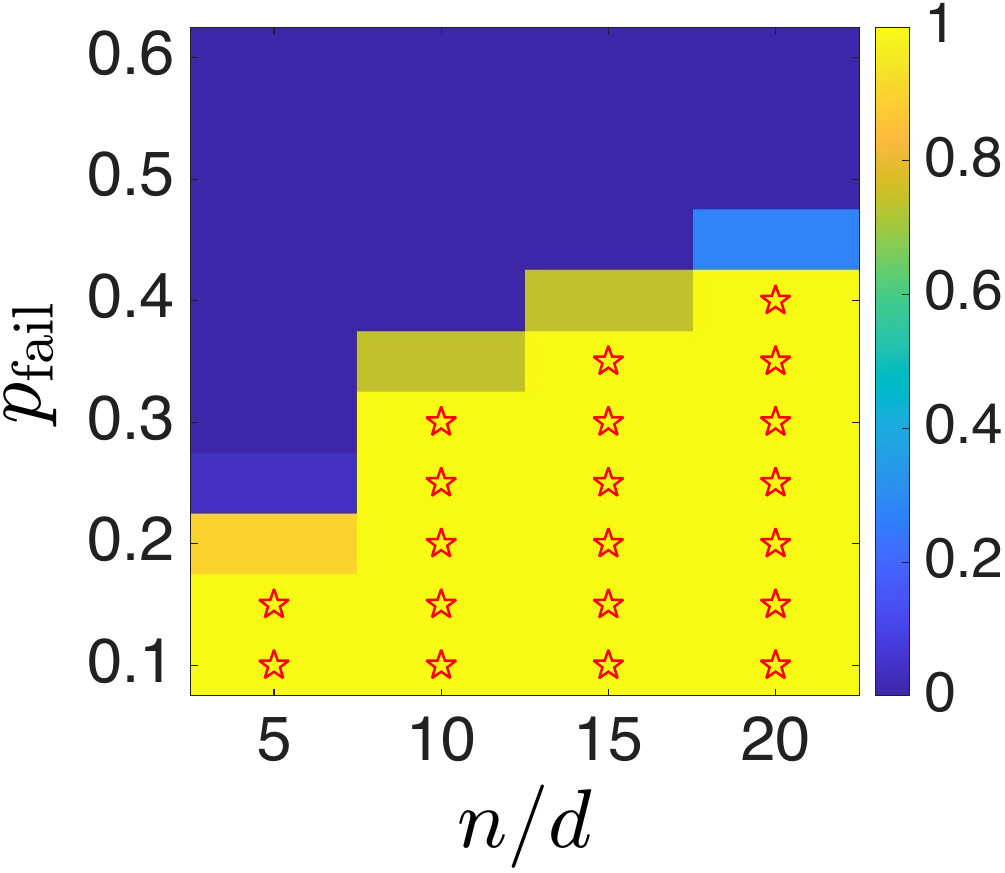}
\caption{$\ell_1$ by IPL}
\end{subfigure}
\begin{subfigure}{0.23\textwidth}
\includegraphics[width=\linewidth]{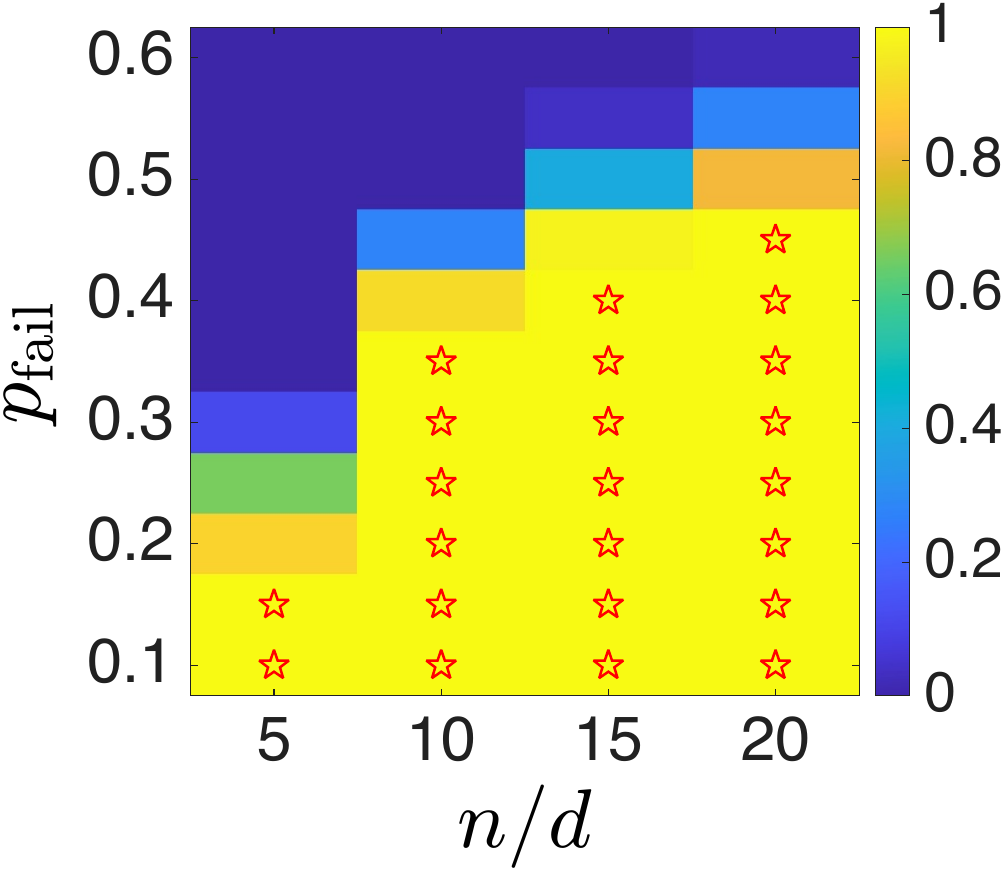}
\caption{Capped $\ell_1$ ($\beta=100$)}
\end{subfigure}
\begin{subfigure}{0.23\textwidth}
\includegraphics[width=\linewidth]{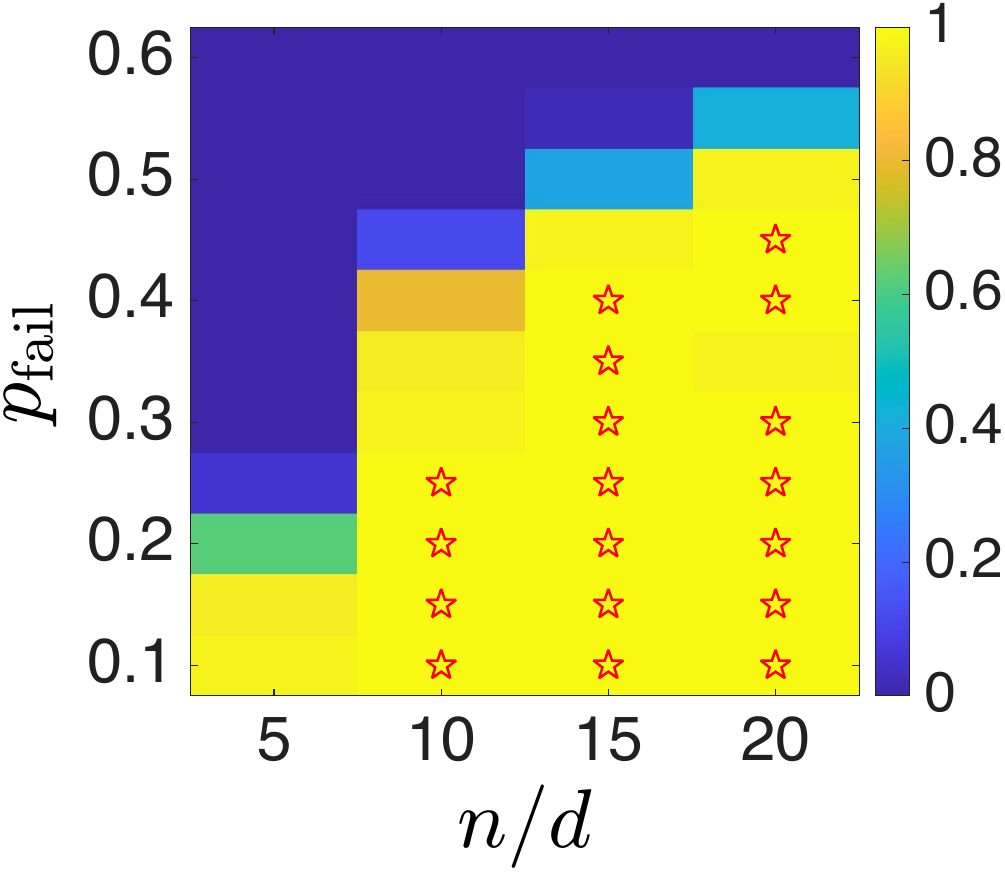}
\caption{Capped $\ell_1$ ($\beta=1000$)}
\end{subfigure}
\begin{subfigure}{0.23\textwidth}
\includegraphics[width=\linewidth]{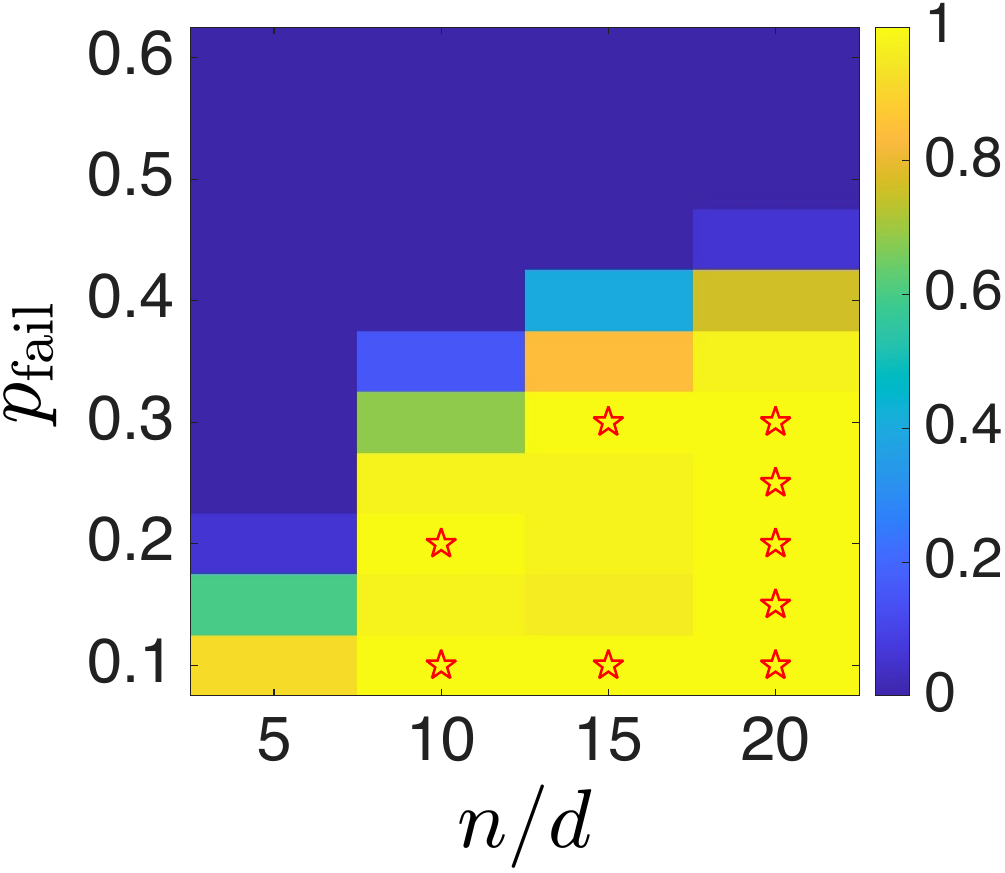}
\caption{Capped $\ell_1$ ($\beta=10000$)}
\end{subfigure}

\begin{subfigure}{0.23\textwidth}
\includegraphics[width=\linewidth]{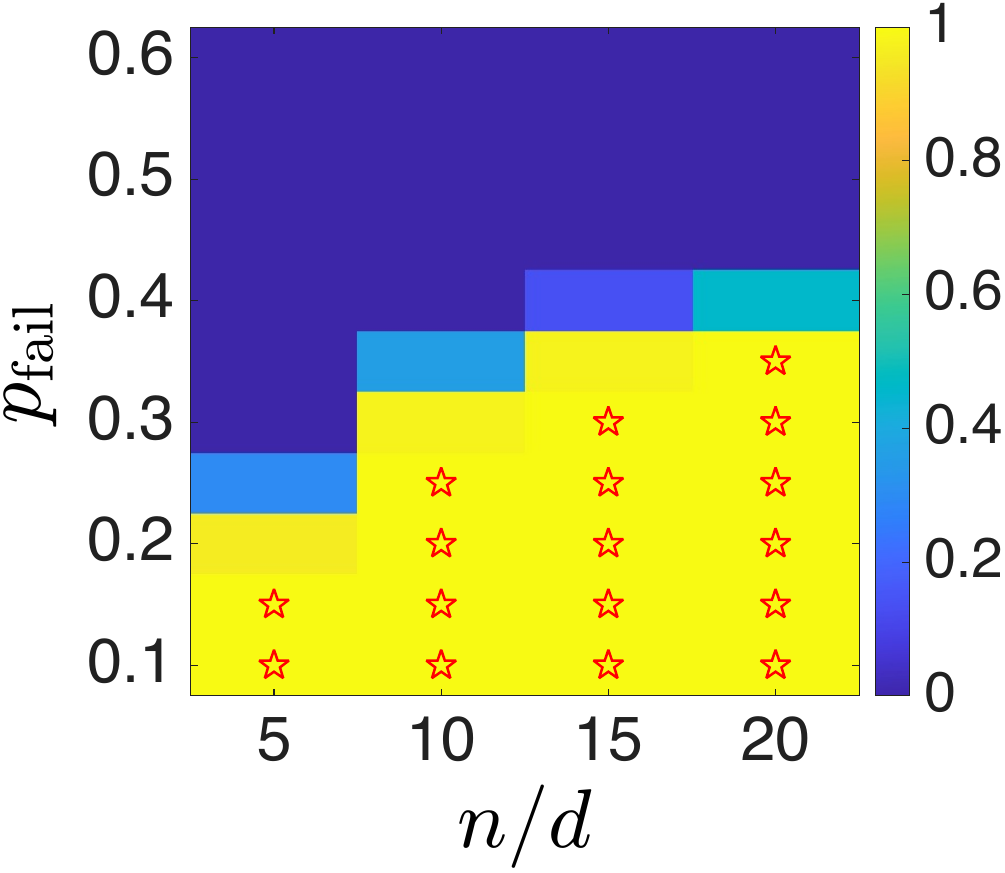}
\caption{$\ell_1$ by Robust-AM}
\end{subfigure} 
\begin{subfigure}{0.23\textwidth}
\includegraphics[width=\linewidth]{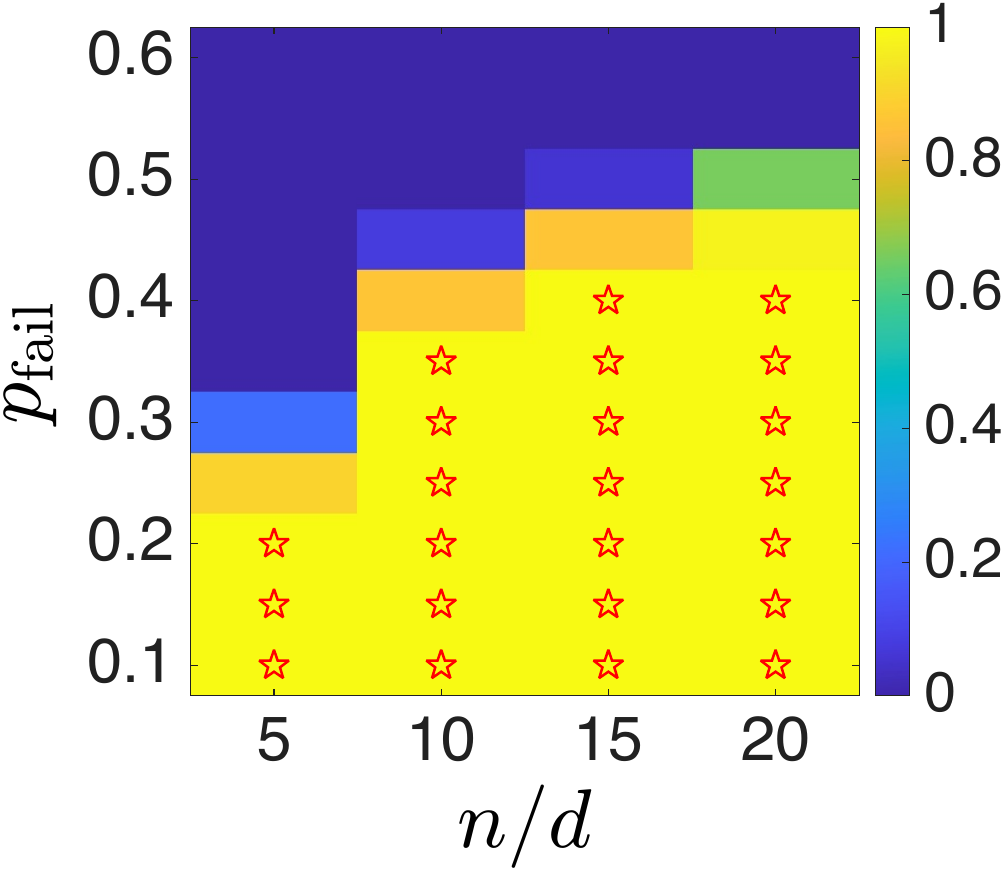}
\caption{Trimmed $\ell_1$ ($K/n=0.2$)}
\end{subfigure}
\begin{subfigure}{0.23\textwidth}
\includegraphics[width=\linewidth]{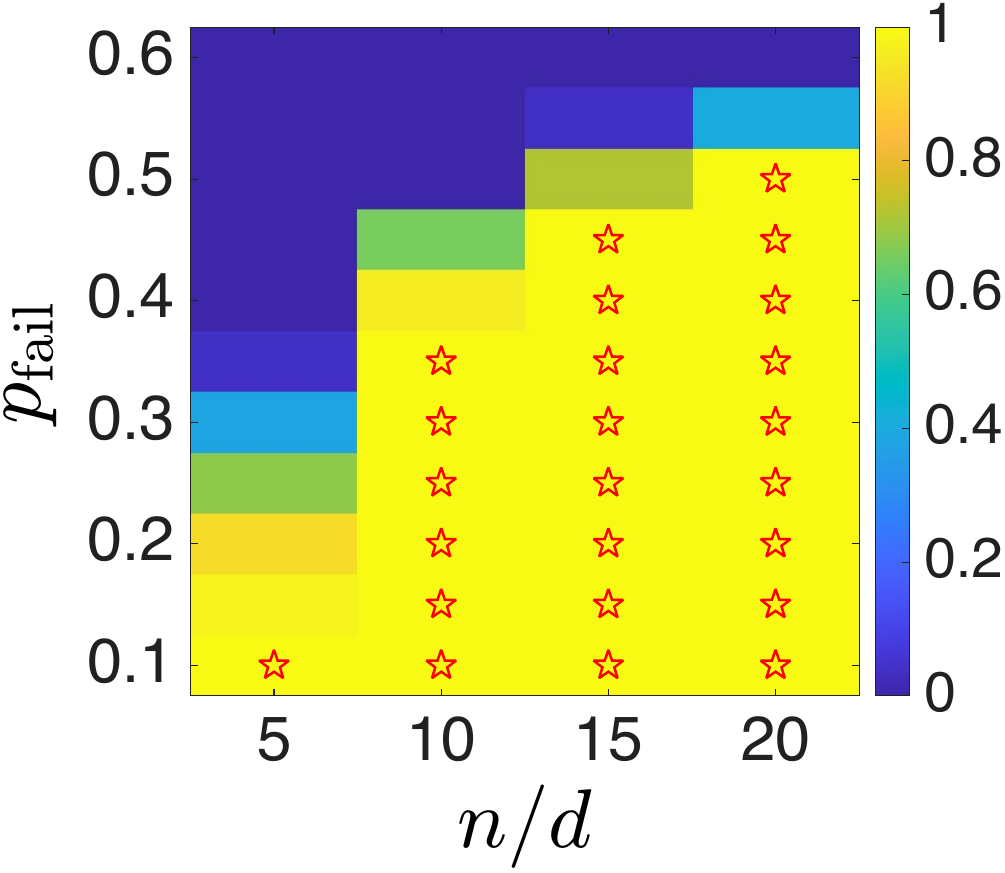}
\caption{Trimmed $\ell_1$ ($K/n=0.3$)}
\end{subfigure}
\begin{subfigure}{0.23\textwidth}
\includegraphics[width=\linewidth]{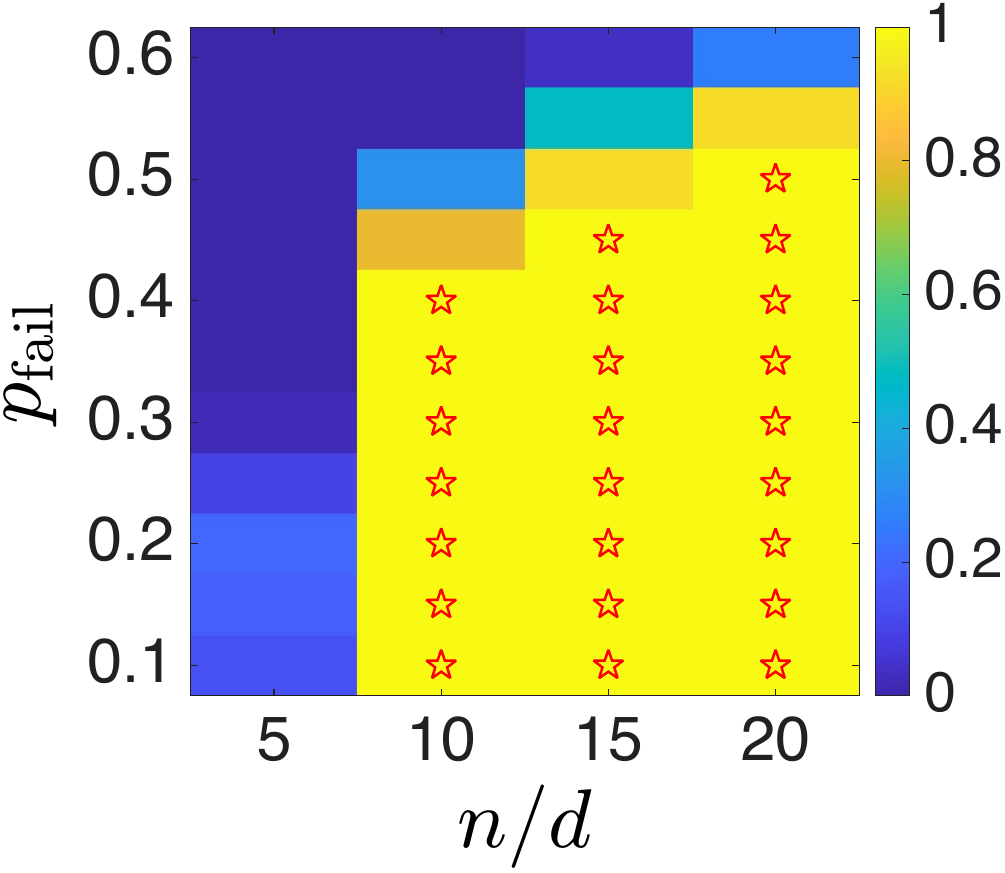}
\caption{Trimmed $\ell_1$ ($K/n=0.4$)}
\end{subfigure}

\caption{Success rates of IPL (a), Robust-AM (e), the proposed method with the capped $\ell_1$ norm (b)-(d) and the trimmed $\ell_1$ norm (f)-(h).
The color of each pixel represents success rate, and red pentagrams stand for success rate of 1.
The experiment was conducted with $d = 100$, $s=1$, and $\xi_i$ drawn from the Cauchy distribution.
}
\label{fig:results_d_100_Cauchy}
\end{figure*}
\begin{figure*}[t]
\centering
\begin{subfigure}{0.23\textwidth}
\includegraphics[width=\linewidth]{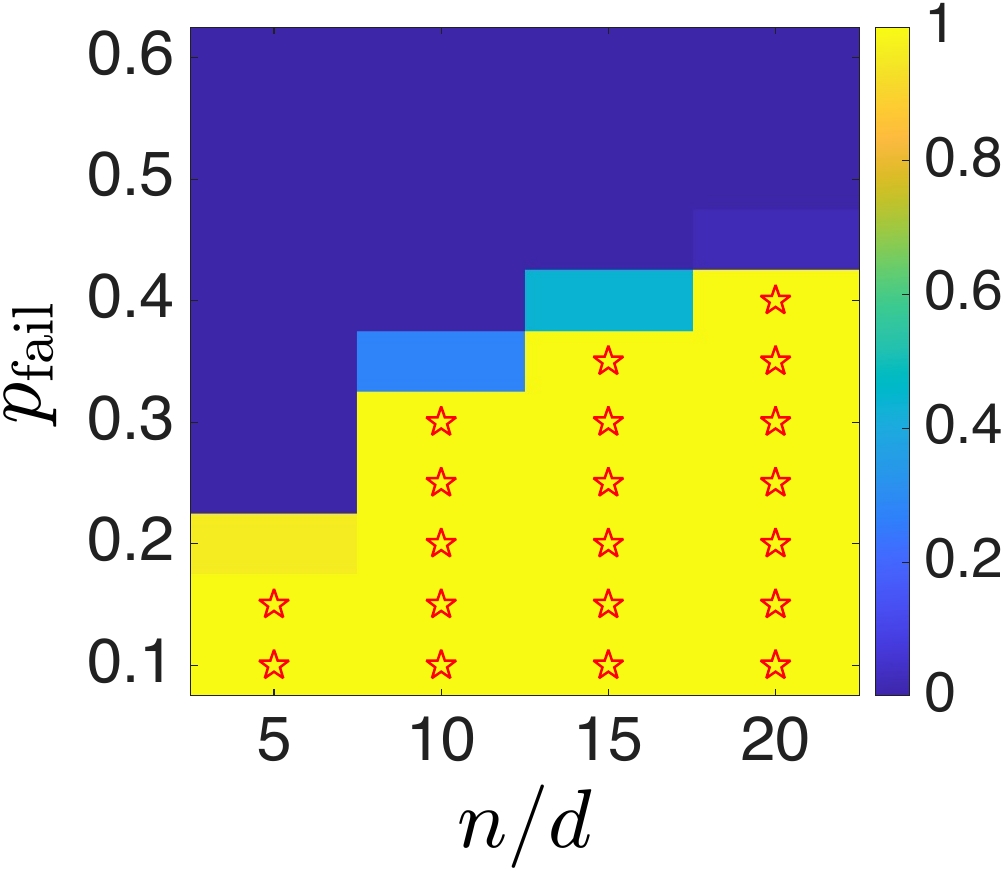}
\caption{$\ell_1$ by IPL}
\end{subfigure}
\begin{subfigure}{0.23\textwidth}
\includegraphics[width=\linewidth]{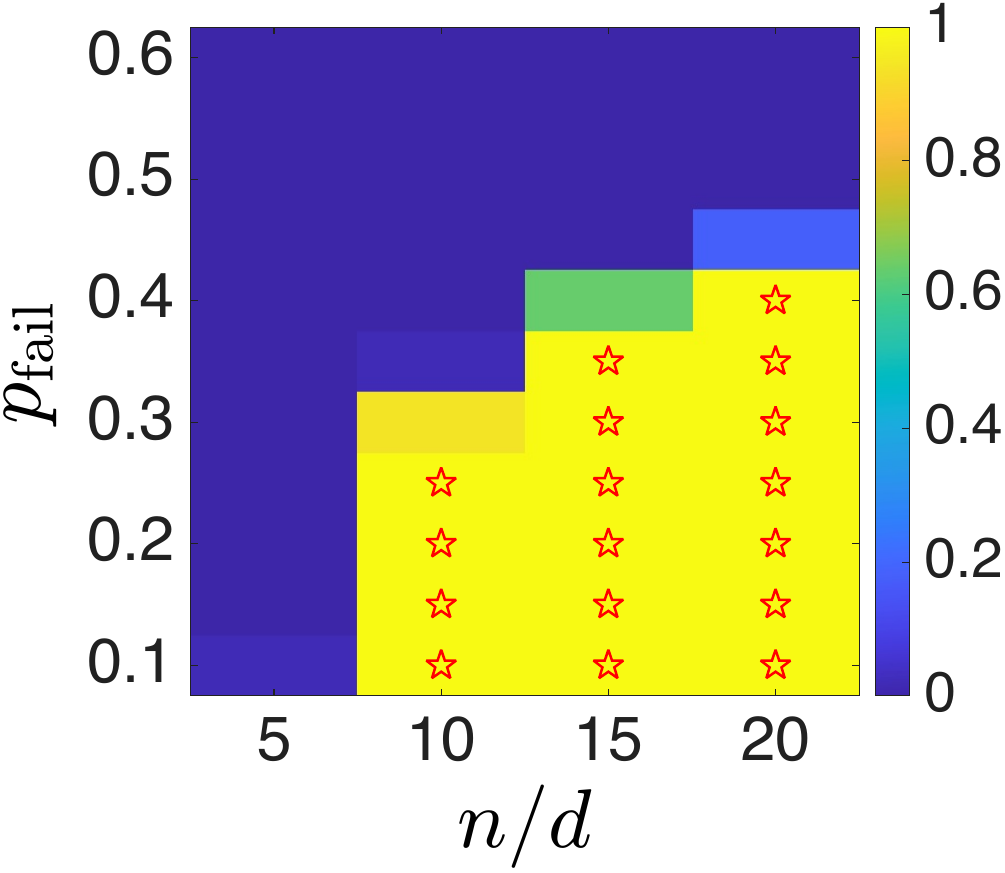}
\caption{Capped $\ell_1$ ($\beta=100$)}
\end{subfigure}
\begin{subfigure}{0.23\textwidth}
\includegraphics[width=\linewidth]{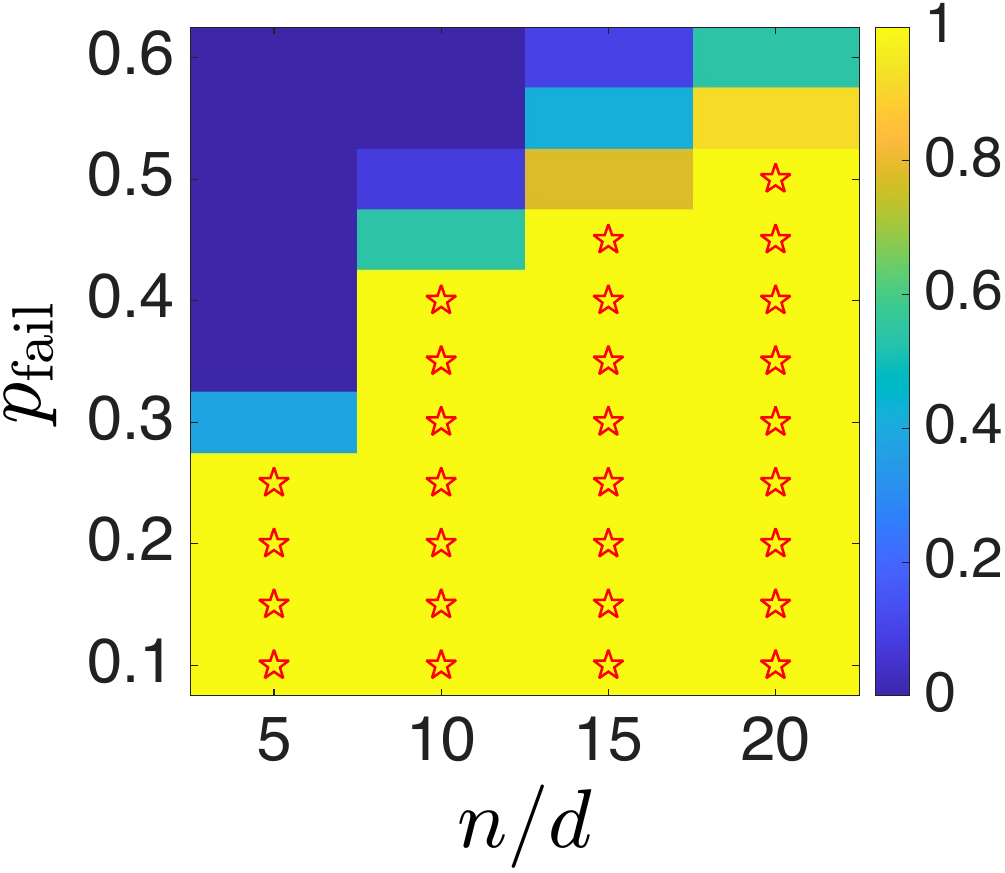}
\caption{Capped $\ell_1$ ($\beta=1000$)}
\end{subfigure}
\begin{subfigure}{0.23\textwidth}
\includegraphics[width=\linewidth]{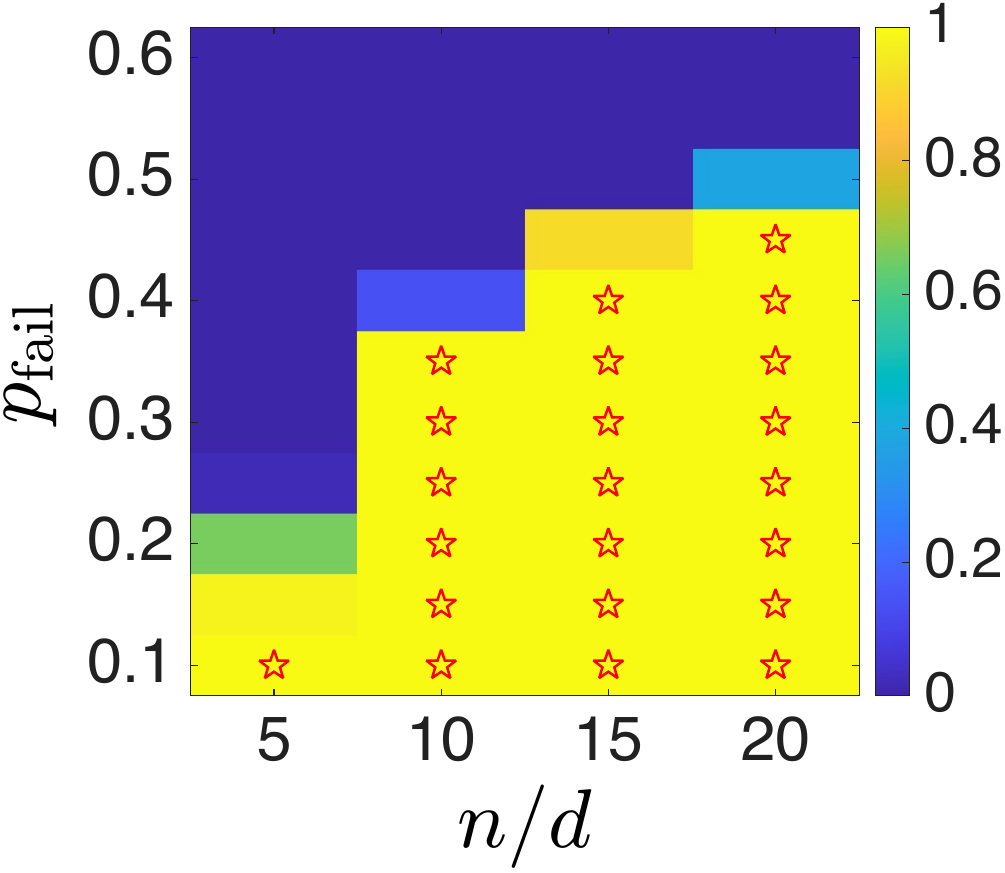}
\caption{Capped $\ell_1$ ($\beta=10000$)}
\end{subfigure}

\hspace{0.23\textwidth}
\begin{subfigure}{0.23\textwidth}
\includegraphics[width=\linewidth]{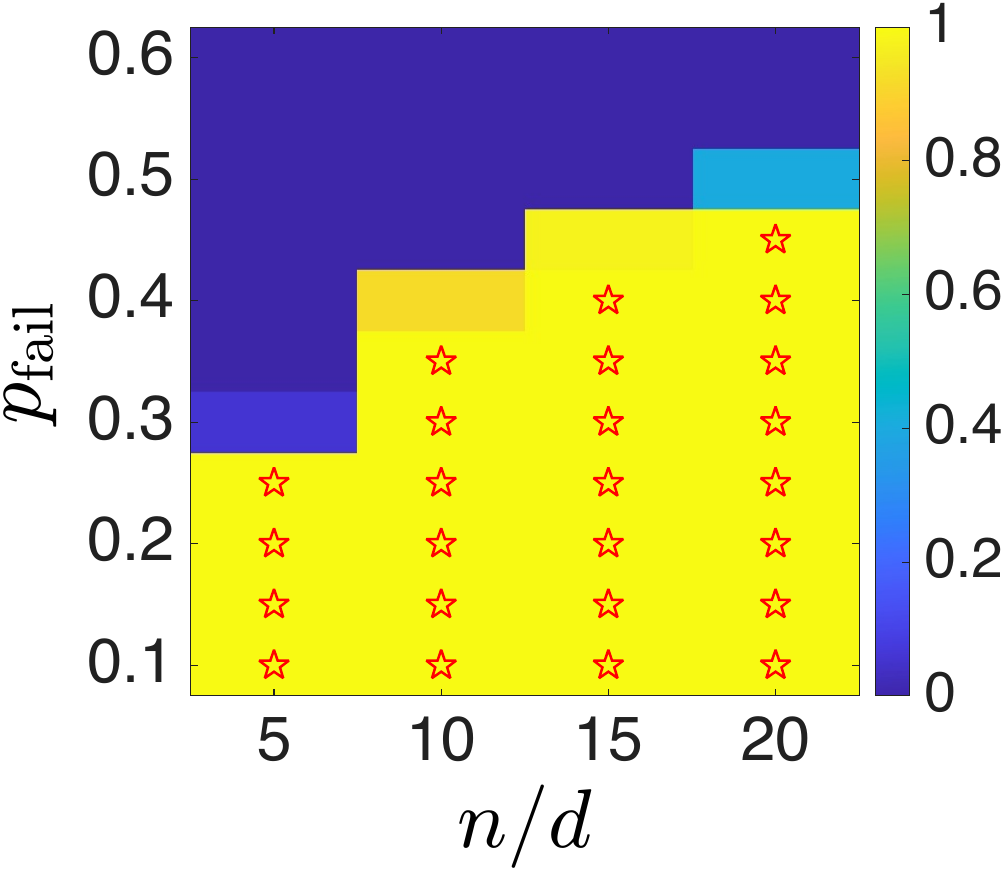}
\caption{Trimmed $\ell_1$ ($K/n=0.2$)}
\end{subfigure}
\begin{subfigure}{0.23\textwidth}
\includegraphics[width=\linewidth]{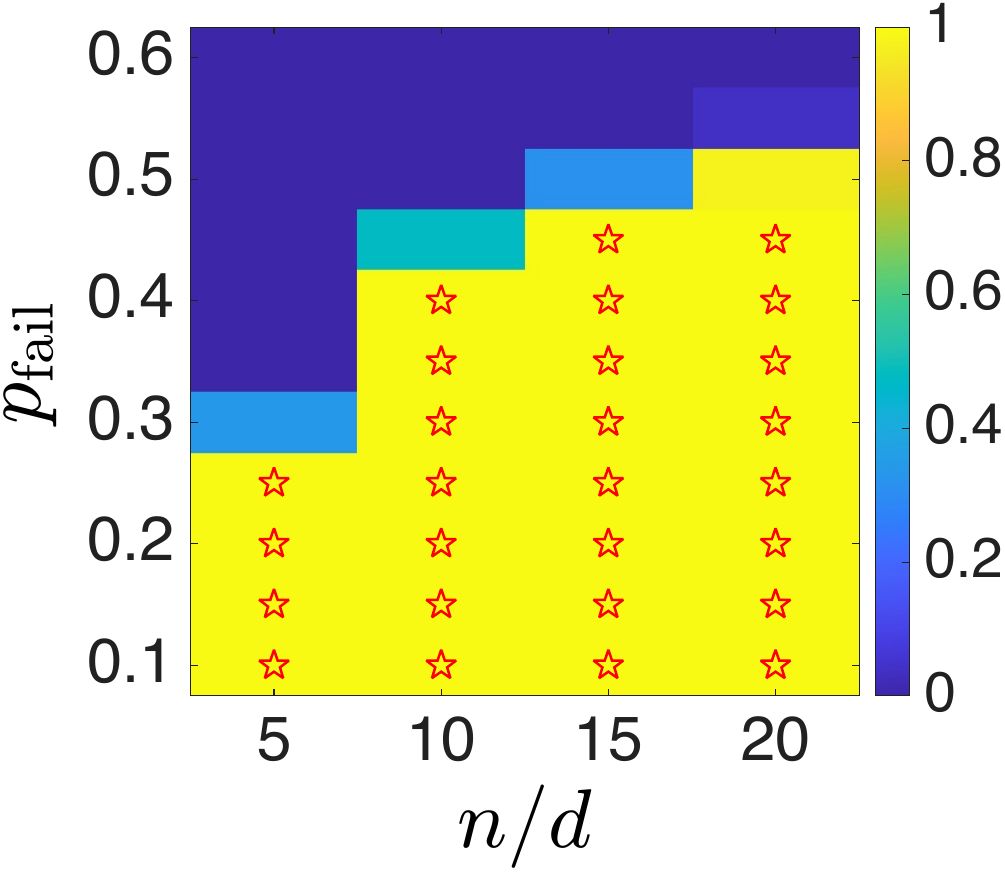}
\caption{Trimmed $\ell_1$ ($K/n=0.3$)}
\end{subfigure}
\begin{subfigure}{0.23\textwidth}
\includegraphics[width=\linewidth]{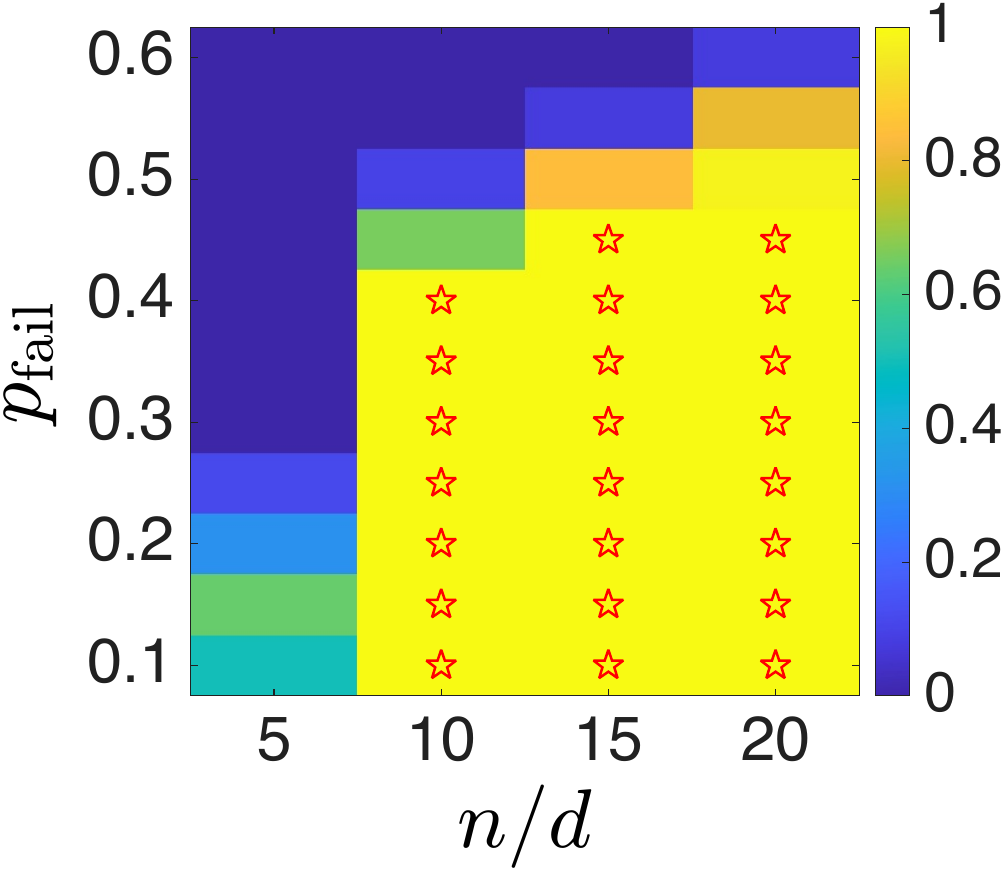}
\caption{Trimmed $\ell_1$ ($K/n=0.4$)}
\end{subfigure}
\caption{Same as \cref{fig:results_d_100_Cauchy}, except that $d=500$.
We omitted the result of Robust-AM because the subproblem solver (ADMM-LAD) did not reach its stopping criterion within 30 seconds even when $p_{\text{fail}}=0.1$.}
\label{fig:results_d_500_Cauchy}
\end{figure*}

In the first experiment, we evaluated the success rate of each method for $n/d \in \{5,10,15,20\}$ and $p_{\text{fail}} \in \Set{0.1+0.05 j}{j\in \{0,1,...,10\}}$.
According to the similar experiments in \cite{kim2024robust,zheng2024new}, we employed $d \in \{100, 500\}$.
We fixed the parameter $s$ at $1$.

\cref{fig:results_d_100_Cauchy} and \cref{fig:results_d_500_Cauchy} show the results for the case $d=100$ and for the case $d=500$, respectively.
In these figures, pixels with 100\% success rate are marked by red pentagrams, and are hereafter referred to as \textit{successful pixels}.
\cref{fig:results_d_100_Cauchy} and \cref{fig:results_d_500_Cauchy} imply that the proposed method with the capped $\ell_1$ norm and the trimmed $\ell_1$ norm tends to achieve relatively high success rate especially in upper region of the figures where $p_{\text{fail}}$ is large.
More precisely, except for the capped $\ell_1$ with $(d,\beta)=(100,10000)$ and $(500,100)$, all the proposed methods have more successful pixels than the existing methods in the region $p_{\text{fail}}\ge 0.35$.

In particular, we observe that the capped $\ell_1$ with properly chosen $\beta$ allows for more successful estimations in a wider region than existing methods.
Specifically, in the case $(d,\beta)=(100,100)$ (resp. $(500,1000)$), 
the set of successful pixels for the capped $\ell_1$ contains those for the existing methods and 3 (resp. 8) additional pixels.
On the other hand, we found that the trimmed $\ell_1$ yields a larger number of successful pixels than existing methods for all tested values of $K/n$.\noteF
These results above are consistent with the intuition in \cref{rma:Robustness of nonconvex DC functions}, where DC loss functions are expected to be robust against outliers.

\begin{figure}[t]
\centering
\includegraphics[width=\columnwidth]{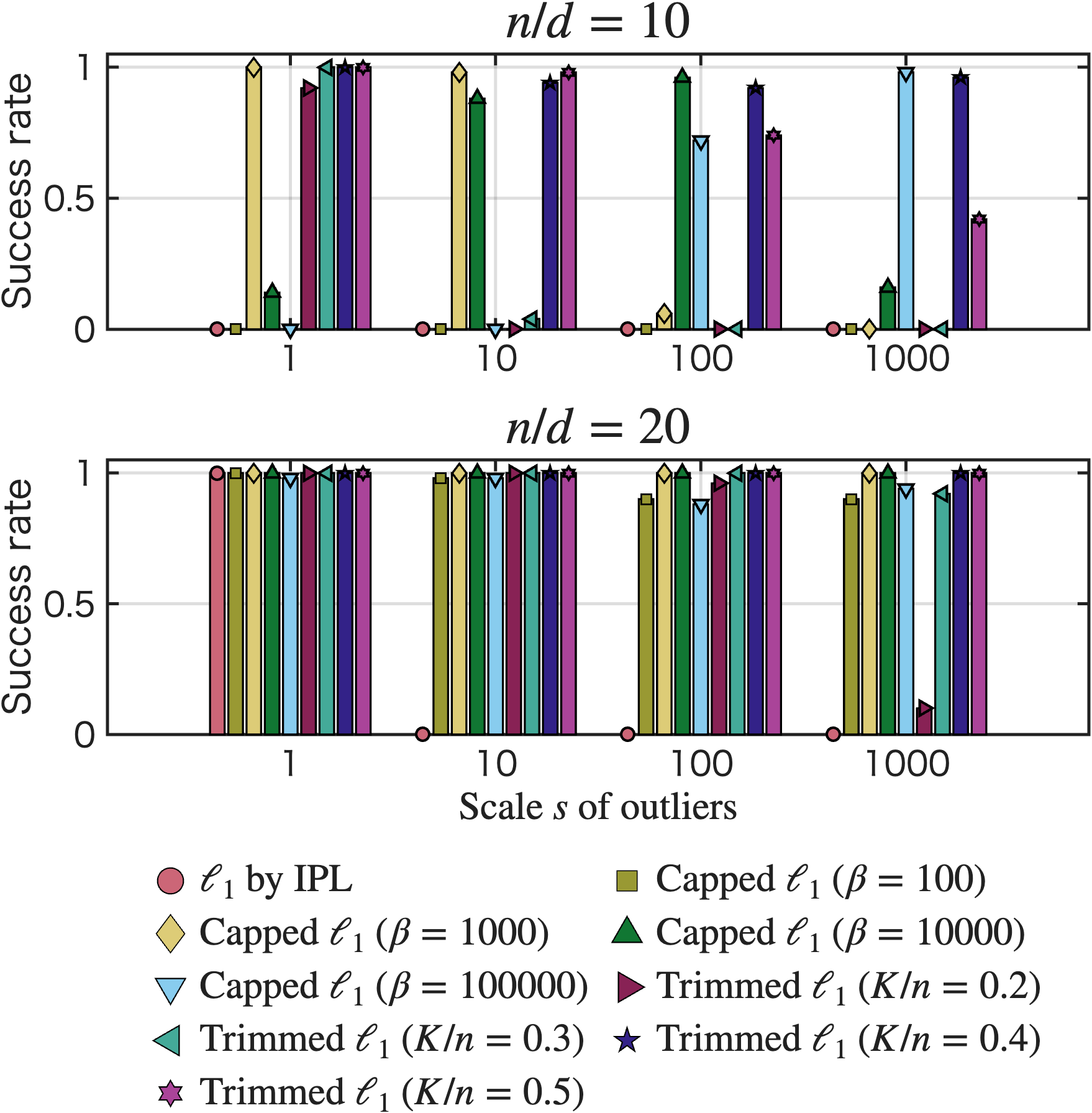}
\caption{Success rate for different values of $s$. $(d,p_{\text{fail}})$ were fixed at $(500,0.4)$.}
\label{fig:different scales of outliers}
\end{figure}

To examine how appropriate choices of $\beta$ and $K$ change with different scales $s$ of outliers, we conducted the second experiment, where $s$ varied in $\{1,10,100,1000\}$.
We fixed $(d,p_{\text{fail}})$ at $(500,0.4)$ in this experiment.

\cref{fig:different scales of outliers}  shows the success rate of each method for the cases $n/d=10$ and $n/d=20$. %
When $n/d=10$, \cref{fig:different scales of outliers} demonstrates that the best performance of the capped $\ell_1$ is obtained with $\beta = 1000$ for $s\in\{1,10\}$, $\beta = 10000$ for $s=100$, and $\beta = 100000$ for $s=1000$, 
which indicates that larger values of $\beta$ are appropriate for large outliers.
In contrast, for the trimmed $\ell_1$, $K/n=0.4$ consistently yields high success rate across all values of $s$, which means that $K$ such that $K/n = p_{\text{fail}}$ is appropriate.
For the case $n/d=20$ where a sufficiently large number of measurements is available, the proposed method performs well across a relatively wide range of $\beta$ and $K$.
(The design of practical parameter selection strategies is beyond the scope of this paper and will be investigated in future work.)

\begin{table*}[t]
    \centering
    \caption{The running CPU time in seconds (outside the parentheses) and the number of iterations (inside the parentheses) for the case $(d,p_{\text{fail}},s)=(500,0.35,1)$.}
    \begin{tabular}{|l|c|c|c|c|}
        \hline
        $n/d$ & 5 & 10 & 15 & 20 \\
        \hline
        \hline
        $\ell_1$ by IPL & 30.00 (250.32) & 23.78 (172.02) & 7.61 (8.86) & 9.11 (6.28) \\
        Capped $\ell_1$ $(\beta = 100)$ & 0.84 (359.96) & 1.60 (362.76) & 1.29 (210.36) & 1.23 (161.14)\\
        Capped $\ell_1$ $(\beta = 1000)$ & 0.33 (135.76) & 0.48 (101.12) & 0.52 (77.48) & 0.51 (59.38) \\
        Capped $\ell_1$ $(\beta = 10000)$ & 0.28 (112.10) & 0.38 (78.04) & 0.37 (50.84) & 0.44 (49.82)\\
        Trimmed $\ell_1$ $(K/n = 0.2)$ & 0.47 (137.90) & 0.59 (88.80) & 0.66 (66.10) & 0.77 (62.56) \\
        Trimmed $\ell_1$ $(K/n = 0.3)$ & 0.92 (290.06) & 3.49 (619.48)  & 3.84 (472.74) & 3.55 (347.74)\\
        Trimmed $\ell_1$ $(K/n = 0.4)$ & 1.97 (664.14) & 2.46 (56.64) & 1.83 (44.92) & 5.36 (46.82)\\
        \hline
    \end{tabular}
    \label{tab:execution time}
\end{table*}
Lastly,  we present, in \cref{tab:execution time}, the running CPU time and the number of iterations of each methods for the representative case $(d,p_{\text{fail}},s)=(500,0.35,1)$.
\cref{tab:execution time} demonstrates that the proposed method consistently requires less running time than IPL.
A possible reason for this is that IPL requires solving subproblems \cref{eq:subproblem for ipl} in each iteration, resulting in a higher computational cost per iteration.
Indeed, \cref{tab:execution time} also shows that the proposed method has a substantially lower per-iteration time than IPL.

%% file: conclusion.tex
\section{Conclusion}
\noindent
We proposed the optimization model \cref{eq:model with nonconvex} with DC loss functions for robust phase retrieval.
For DC composite-type problem (\cref{problem}) including the proposed model, we designed a variable smoothing algorithm (\cref{algorithm}) with a convergence guarantee in terms of a DC composite critical point.
The proposed algorithm was designed to find a DC composite critical point by generating the sequence of points at which the gradient of the smooth surrogate function approaches zero.
Through numerical experiments, we demonstrated the robustness of the proposed model, investigated the relationship between the scale or number of outliers and the appropriate values of $\beta$ and $K$,
and showed the computational efficiency of the proposed algorithm.

\section*{Acknowledgement}
\noindent
We thank Mr.Kim (The Ohaio State University), the first author of \cite{kim2024robust}, for kindly sharing the code of Robust-AM.

%% file: appendix.tex
\appendices

\section{Proof of \cref{lem:Local optimality implies DC criticality}}
\noindent We show that a local minimizer $\x^\star$ of $F$ is a DC composite critical point of $F$.
\begin{proof}[Proof of \cref{lem:Local optimality implies DC criticality}]
    It suffices to prove the \underline{Claim 1:} $\del_{\mathrm{L}} (g\circ\frakS)(\x^\star)\ne\emptyset$ and the \underline{Claim 2:} $\del_{\mathrm{L}}(f\circ\frakS)(\x^\star) \supset \del_{\mathrm{L}}(g\circ\frakS)(\x^\star)$ because
  these two claims imply that $\del_{\mathrm{L}}(f\circ\frakS)(\x^\star)-\del_{\mathrm{L}}(g\circ\frakS)(\x^\star)\ni \bm{v} - \bm{v} = \bm{0}$ with some $\bm{v}\in \del_{\mathrm{L}} (g\circ\frakS)(\x^\star)$.
  
  \underline{\textbf{Claim 1:}}
  Since $\frakS$ is continuously differentiable, $\frakS$ is \textit{locally Lipschitz continuous (strictly continuous)} \cite[Thm. 9.7]{rockafellar2009variational}, i.e., for any $\x\in\R^d$, there exists an open neighborhood $\mathcal{N}(\x)\subset\R^d$ of $\x$ such that $\frakS$ is Lipschitz continuous on $\mathcal{N}(\x)$.
  Because compositions of locally Lipschitz continuous mappings are locally Lipschitz continuous \cite[Exe. 9.8 (c)]{rockafellar2009variational}, $g\circ\frakS$ is locally Lipschitz continuous.
  Thus, we obtain $\del_{\mathrm{L}} (g\circ\frakS)(\x^\star)\ne \emptyset$ by \cite[Thm. 9.13]{rockafellar2009variational}.
  
  \underline{\textbf{Claim 2:}}
  By applying the sum rule of Fre\'{c}het (regular) subdifferential \cite[Cor. 10.9]{rockafellar2009variational} to $f\circ \frakS = F + g\circ \frakS$, and using Fermat's rule $\del_{\mathrm{F}}F(\x^\star)\ni \bm{0}$ \cite[Thm. 10.1]{rockafellar2009variational}, we have
  \begin{align}
    \del_{\mathrm{F}}(f\circ\frakS)(\x^\star) &\supset \del_{\mathrm{F}}F(\x^\star) + \del_{\mathrm{F}}(g\circ\frakS)(\x^\star)\\
    &\supset \del_{\mathrm{F}}(g\circ\frakS)(\x^\star).
  \end{align}
    Then, \cref{fac:equivalence of limiting and frechet subdifferential} yields
    $\del_{\mathrm{L}}(f\circ\frakS)(\x^\star) \supset \del_{\mathrm{L}}(g\circ\frakS)(\x^\star)$.
\end{proof}

\section{Proof of \cref{thm:dc gradient sub-consistency}}
\noindent The following facts are required for the proof of \cref{thm:dc gradient sub-consistency}.
\begin{fac}[Sum rule of outer limit]\label{fac:sum rule of outer limit}
    For a point sequence $\seq{\bm{p}}{k}\subset \R^d$ and a bounded point sequence $\seq{\bm{q}}{k}\subset \R^d$,
    We have $\displaystyle \Limsup_{k\to\infty}(\bm{p}_k + \bm{q}_k) \subset \Limsup_{k\to\infty}\bm{p}_k + \Limsup_{k\to\infty}\bm{q}_k$.\noteG
\end{fac}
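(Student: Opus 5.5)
We prove the inclusion directly from the definition of the outer limit, essentially as in footnote \noteG-style arguments. Let $\bm{v}\in\Limsup_{k\to\infty}(\bm{p}_k+\bm{q}_k)$. By definition, $\bm{v}$ is a cluster point of $(\bm{p}_k+\bm{q}_k)_{k=1}^{\infty}$. Hence there is a monotonically increasing $\map{m}{\N}{\N}$ with $\bm{p}_{m(l)}+\bm{q}_{m(l)}\to\bm{v}$ as $l\toinf$. The whole task is to split this limit into a limit of $\bm{p}$-terms plus a limit of $\bm{q}$-terms along a common subsequence.

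The key step uses the boundedness of $\seq{\bm{q}}{k}$. The subsequence $(\bm{q}_{m(l)})_{l=1}^{\infty}$ is bounded in $\R^d$, so by the Bolzano--Weierstrass theorem it has a further convergent subsequence. We pass to it, rename the index map again as $m$, and obtain $\bm{q}_{m(l)}\to\bm{q}$ for some $\bm{q}\in\R^d$. Since subsequences of a convergent sequence keep the same limit, we still have $\bm{p}_{m(l)}+\bm{q}_{m(l)}\to\bm{v}$. Moreover, $\bm{q}$ is a cluster point of $\seq{\bm{q}}{k}$, so $\bm{q}\in\Limsup_{k\toinf}\bm{q}_k$.

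Next we write $\bm{p}_{m(l)}=(\bm{p}_{m(l)}+\bm{q}_{m(l)})-\bm{q}_{m(l)}$. The right-hand side converges to $\bm{v}-\bm{q}$, so $\bm{v}-\bm{q}$ is a cluster point of $\seq{\bm{p}}{k}$, that is, $\bm{v}-\bm{q}\in\Limsup_{k\toinf}\bm{p}_k$. Therefore
\begin{equation}
\bm{v}=(\bm{v}-\bm{q})+\bm{q}\in\Limsup_{k\toinf}\bm{p}_k+\Limsup_{k\toinf}\bm{q}_k,
\end{equation}
using the Minkowski-sum notation fixed in the Notation paragraph. This gives the claimed inclusion.

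The only step that needs care is the extraction of a common subsequence along which both component sequences converge. Boundedness of $\seq{\bm{q}}{k}$ is exactly what makes this possible. Without it, $\bm{p}_k$ and $\bm{q}_k$ could both diverge while their sum converges, and the inclusion would fail.
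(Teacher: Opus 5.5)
Your proposal is correct and follows essentially the same argument as the paper's footnote proof. Both extract a subsequence along which $\bm{p}_k+\bm{q}_k\to\bm{v}$, use boundedness and Bolzano--Weierstrass to make $\bm{q}_{m(l)}$ converge to some $\bm{q}\in\Limsup_{k\toinf}\bm{q}_k$, and conclude that $\bm{v}-\bm{q}=\lim_{l\toinf}\bm{p}_{m(l)}\in\Limsup_{k\toinf}\bm{p}_k$.
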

\begin{fac}[\fontsize{9.5pt}{10pt}\selectfont Boundedness of gradient sequences {\cite[Fact 2.4~(c)]{kume2025proximal}}]\label{fac:boundedness of gradient sequences}
    In the setting of \cref{thm:dc gradient sub-consistency},
    $\rbra*{\nb (\mor{f}{\mu_k}\circ\frakS)(\x_k)}_{k=1}^{\infty}$ and $\rbra*{\nb (\mor{g}{\mu_k}\circ\frakS)(\x_k)}_{k=1}^{\infty}$ are bounded sequences.
\end{fac}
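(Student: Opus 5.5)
The plan is to combine the chain rule with a uniform bound on the gradient of the Moreau envelope; the argument is the same for $f$ and $g$, so I describe it for $f$. Since each $\mu_k\in(0,\eta^{-1})$, \cref{Properties of Moreau envelope}~(b) says that $\mor{f}{\mu_k}$ is continuously differentiable. Because $\frakS$ is differentiable, the chain rule gives
\begin{equation}
  \nb(\mor{f}{\mu_k}\circ\frakS)(\x_k)=\D\frakS(\x_k)^T\,\nb\mor{f}{\mu_k}(\frakS(\x_k)).
\end{equation}
Hence $\norm{\nb(\mor{f}{\mu_k}\circ\frakS)(\x_k)}\le\norm{\D\frakS(\x_k)}_{\mathrm{op}}\,\norm{\nb\mor{f}{\mu_k}(\frakS(\x_k))}$. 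It then suffices to bound the two factors uniformly in $k$.

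\textbf{First factor.} The sequence $\seq{\x}{k}$ converges to $\bar{\x}$ and is therefore bounded. Since $\D\frakS$ is $L_{\D\frakS}$-Lipschitz,
\begin{equation}
  \norm{\D\frakS(\x_k)}_{\mathrm{op}}\le\norm{\D\frakS(\bar{\x})}_{\mathrm{op}}+L_{\D\frakS}\sup_{k}\norm{\x_k-\bar{\x}}<\infty .
\end{equation}
(The Frobenius norm bounds the operator norm, so Lipschitz continuity in either matrix norm gives this.)

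\textbf{Second factor.} I claim $\norm{\nb\mor{f}{\mu}(\z)}\le L_f$ for every $\z\in\R^n$ and every $\mu\in(0,\eta_f^{-1})$. Set $\bm{p}:=\prox{\mu f}(\z)$. By \cref{Properties of Moreau envelope}~(b), $\nb\mor{f}{\mu}(\z)=\mu^{-1}(\z-\bm{p})$. The function $f+(2\mu)^{-1}\norm{\cdot-\z}^2$ is strongly convex, hence convex, so Fermat's rule together with the smooth sum rule gives $\bm{0}\in\del_{\mathrm{L}}f(\bm{p})+\mu^{-1}(\bm{p}-\z)$, that is, $\mu^{-1}(\z-\bm{p})\in\del_{\mathrm{L}}f(\bm{p})$.

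Next I show that $L_f$-Lipschitz continuity forces every limiting subgradient to have norm at most $L_f$. For a Fréchet subgradient $\bm{w}$ at $\bm{p}$, test the definition along $\bm{p}+t\bm{w}/\norm{\bm{w}}$ with $t\searrow0$. The numerator is at most $L_f t-t\norm{\bm{w}}$, so the liminf condition forces $\norm{\bm{w}}\le L_f$. This bound passes to limits, so it also holds for limiting subgradients; alternatively one can cite \cite[Thm.~9.13]{rockafellar2009variational}. Since $\mu_k\le\eta^{-1}\le\eta_f^{-1}$, this gives $\norm{\nb\mor{f}{\mu_k}(\frakS(\x_k))}\le L_f$ for every $k$, and likewise with $L_g$ for $g$.

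Multiplying the two bounds yields $\sup_k\norm{\nb(\mor{f}{\mu_k}\circ\frakS)(\x_k)}<\infty$, and the same argument applies to $g$. The only non-routine step is the uniform bound $\norm{\nb\mor{f}{\mu}}\le L_f$. It relies on the prox optimality condition, which needs weak convexity, and on the subgradient bound from Lipschitz continuity. Both are standard, so I expect no real obstacle.
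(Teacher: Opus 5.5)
Your proof is correct. It uses the same ingredients that the paper relies on when it cites this fact from \cite[Fact 2.4~(c)]{kume2025proximal} without proof: the chain rule $\nb(\mor{f}{\mu_k}\circ\frakS)(\x_k)=\D\frakS(\x_k)^T\nb\mor{f}{\mu_k}(\frakS(\x_k))$, boundedness of $\norm{\D\frakS(\x_k)}_{\mathrm{op}}$ along the convergent sequence, and the uniform bound $\norm{\nb\mor{f}{\mu}}\le L_f$. The one difference is how you get that bound. In Appendix C the paper reaches it more directly, from the $L_f$-Lipschitz continuity of $\mor{f}{\mu}$ \cite[Lem. 3.3]{bohm2021variable} together with \cite[Thm. 9.7]{rockafellar2009variational}, whereas you go through the prox optimality condition and a subgradient bound.
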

\begin{fac}[Gradient sub-consistency {\cite[Thm.~4.4~(a)]{kume2024variableLong}}]\label{lem:gradient sub-consistency}
    In the setting of \cref{thm:dc gradient sub-consistency}, we have
    \begin{align}
        &\Limsup_{k\to\infty}\nb (\mor{f}{\mu_k}\circ\frakS)(\x_k) \subset \del_\mathrm{L}(f \circ \frakS)(\bar{\x}),\\
        &\Limsup_{k\to\infty}\nb (\mor{g}{\mu_k}\circ\frakS)(\x_k) \subset \del_\mathrm{L}(g \circ \frakS)(\bar{\x}).
    \end{align}
\end{fac}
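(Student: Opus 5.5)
The argument combines the chain rule for $\mor{f}{\mu_k}\circ\frakS$, the prox characterization of $\nb\mor{f}{\mu_k}$, and a direct Fr\'{e}chet-subgradient computation at $\bar{\x}$. I give it for $f$; the statement for $g$ is identical, since $g$ enjoys the same assumptions in \cref{problem}.

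Set $\z_k:=\frakS(\x_k)$, $\bm{p}_k:=\prox{\mu_k f}(\z_k)$ and $\bm{v}_k:=\nb\mor{f}{\mu_k}(\z_k)=\mu_k^{-1}(\z_k-\bm{p}_k)$, using \cref{Properties of Moreau envelope}~(b). By the chain rule,
\begin{equation}
\nb(\mor{f}{\mu_k}\circ\frakS)(\x_k)=\D\frakS(\x_k)^T\bm{v}_k .
\end{equation}

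\textbf{Step 1 (subgradient and boundedness).} The optimality condition of the strongly convex prox subproblem gives $\bm{v}_k\in\del f(\bm{p}_k)$. Here $\del f$ is the subdifferential of the weakly convex $f$, i.e.\ $\del(f+\frac{\eta_f}{2}\norm{\cdot}^2)-\eta_f\,\mathrm{Id}$. Concretely, this means the inequality
\begin{equation}
f(\z)\ge f(\bm{p}_k)+\abra{\bm{v}_k,\z-\bm{p}_k}-\tfrac{\eta_f}{2}\norm{\z-\bm{p}_k}^2 \quad\text{for all } \z\in\R^n .
\end{equation}
Because $f$ is $L_f$-Lipschitz, these subgradients satisfy $\norm{\bm{v}_k}\le L_f$. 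Consequently $\norm{\z_k-\bm{p}_k}=\mu_k\norm{\bm{v}_k}\le\mu_kL_f\to0$. Since $\z_k\to\frakS(\bar{\x})$ by continuity of $\frakS$, this gives $\bm{p}_k\to\frakS(\bar{\x})$.

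\textbf{Step 2 (passing to the limit).} Let $\bm{w}$ be a cluster point of $\D\frakS(\x_{k})^T\bm{v}_{k}$ along some subsequence. Since $(\bm{v}_k)$ is bounded, Bolzano--Weierstrass lets me pass to a further subsequence with $\bm{v}_k\to\bm{v}$. Continuity of $\D\frakS$ then gives $\bm{w}=\D\frakS(\bar{\x})^T\bm{v}$.

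Next I let $k\to\infty$ in the inequality of Step 1, for each fixed $\z$, using $\bm{p}_k\to\frakS(\bar{\x})$ and the continuity of $f$. This yields, for all $\z\in\R^n$,
\begin{equation}
f(\z)\ge f(\frakS(\bar{\x}))+\abra{\bm{v},\z-\frakS(\bar{\x})}-\tfrac{\eta_f}{2}\norm{\z-\frakS(\bar{\x})}^2 .
\end{equation}
This limiting step, i.e.\ closedness of the subdifferential graph of a weakly convex function, is the main technical point. The global quadratic lower bound is exactly what makes it painless.

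\textbf{Step 3 (Fr\'{e}chet subgradient of the composition).} Substitute $\z:=\frakS(\x)$ in the last inequality and write $\frakS(\x)=\frakS(\bar{\x})+\D\frakS(\bar{\x})(\x-\bar{\x})+\bm{r}(\x)$. Since $\D\frakS$ is $L_{\D\frakS}$-Lipschitz, $\norm{\bm{r}(\x)}\le\frac{L_{\D\frakS}}{2}\norm{\x-\bar{\x}}^2$, and $\norm{\frakS(\x)-\frakS(\bar{\x})}=O(\norm{\x-\bar{\x}})$ locally. Hence
\begin{equation}
(f\circ\frakS)(\x)-(f\circ\frakS)(\bar{\x})-\abra{\D\frakS(\bar{\x})^T\bm{v},\x-\bar{\x}}\ge \abra{\bm{v},\bm{r}(\x)}-\tfrac{\eta_f}{2}\norm{\frakS(\x)-\frakS(\bar{\x})}^2=o(\norm{\x-\bar{\x}}).
\end{equation}
This says exactly that $\bm{w}=\D\frakS(\bar{\x})^T\bm{v}\in\del_{\mathrm{F}}(f\circ\frakS)(\bar{\x})\subset\del_{\mathrm{L}}(f\circ\frakS)(\bar{\x})$, which completes the proof.
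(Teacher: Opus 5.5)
Your proof is correct and complete. The paper gives no argument of its own for this statement; it imports it from \cite[Thm.~4.4~(a)]{kume2024variableLong}, so your proof is a genuinely self-contained alternative.

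The standard way to get this result is modular. First one shows, exactly as in your Steps~1--2, that every cluster point $\bm{v}$ of $\nabla\,{}^{\mu_k}f(\mathfrak{S}(\bm{x}_k))$ lies in $\partial f(\mathfrak{S}(\bar{\bm{x}}))$; this is gradient consistency of the Moreau envelope. One then appeals to the elementary half of the subdifferential chain rule, $\mathrm{D}\mathfrak{S}(\bar{\bm{x}})^T\partial_{\mathrm{F}} f(\mathfrak{S}(\bar{\bm{x}}))\subset\partial_{\mathrm{F}}(f\circ\mathfrak{S})(\bar{\bm{x}})$, valid for any $C^1$ mapping \cite[Thm.~10.6]{rockafellar2009variational}, together with $\partial_{\mathrm{F}} f=\partial_{\mathrm{L}} f$ for weakly convex $f$.

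Your Step~3 replaces that appeal by a direct verification of the Fr\'echet inequality. This is easy here because the weakly convex subgradient inequality is a \emph{global} quadratic minorant, so it can be composed with $\mathfrak{S}$ and combined with the Taylor remainder of $\mathfrak{S}$.

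Your route buys three things:
\begin{enumerate}
\item Self-containedness: no subdifferential calculus, regularity, or qualification conditions are invoked.
\item A slightly stronger conclusion: cluster points lie in $\mathrm{D}\mathfrak{S}(\bar{\bm{x}})^T\partial f(\mathfrak{S}(\bar{\bm{x}}))\subset\partial_{\mathrm{F}}(f\circ\mathfrak{S})(\bar{\bm{x}})$, so \cref{fac:equivalence of limiting and frechet subdifferential} is not needed at this point.
\item Weaker hypotheses on $\mathfrak{S}$: only differentiability of $\mathfrak{S}$ and continuity of $\mathrm{D}\mathfrak{S}$ at $\bar{\bm{x}}$ are actually used, since a remainder of order $o(\|\bm{x}-\bar{\bm{x}}\|)$ suffices in Step~3.
\end{enumerate}
The modular route instead splits the claim into two standard, reusable facts: consistency of Moreau-envelope gradients, and a chain rule.
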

\begin{proof}[Proof of \cref{thm:dc gradient sub-consistency}]
    (a) We can see from \cref{fac:boundedness of gradient sequences} that $\rbra*{-\nb (\mor{g}{\mu_k}\circ\frakS)(\x_k)}_{k=1}^{\infty}$ is bounded.
    Thus, we can employ \cref{fac:sum rule of outer limit} with $\bm{p}_k := \nb (\mor{f}{\mu_k}\circ\frakS)(\x_k),\ \bm{q}_k := - \nb (\mor{g}{\mu_k}\circ\frakS)(\x_k)$ to deduce that
    \begin{align}
            &\Limsup_{k\to\infty} \nb F_k(\x_k)= \Limsup_{k\to\infty} \nb \rbra*{\mor{f}{\mu_k}\circ\frakS-\mor{g}{\mu_k}\circ\frakS}(\x_k)\\
            &\subset\ \Limsup_{k\to\infty}\nb (\mor{f}{\mu_k}\circ\frakS)(\x_k) + \Limsup_{k\to\infty} \rbra*{-\nb (\mor{g}{\mu_k}\circ\frakS)(\x_k)}\\
            &= \Limsup_{k\to\infty}\nb (\mor{f}{\mu_k}\circ\frakS)(\x_k) - \Limsup_{k\to\infty}\nb (\mor{g}{\mu_k}\circ\frakS)(\x_k).
    \end{align}
    By combining \cref{lem:gradient sub-consistency}, we get the desired inclusion:
    \begin{align}
        &\Limsup_{k\to\infty}\nb (\mor{f}{\mu_k}\circ\frakS)(\x_k) - \Limsup_{k\to\infty}\nb (\mor{g}{\mu_k}\circ\frakS)(\x_k)\\
        \subset\ &\del_\mathrm{L}(f \circ \frakS)(\bar{\x}) - \del_\mathrm{L}(g \circ \frakS)(\bar{\x}).
    \end{align}

    (b)
    The claim can be derived as follows:
    \begin{align}
        &\liminf_{k\to\infty}\norm{\nb F_k(\x_k)} = \liminf_{k\to\infty}\dist\rbra*{\bm{0},\nb F_k(\x_k)}\\
        &= \dist\rbra*{\bm{0},\Limsup_{k\to\infty} \nb F_k(\x_k)}\\
        &\ge \dist\rbra[\big]{\bm{0},\del_\mathrm{L}(f \circ \frakS)(\bar{\x}) - \del_\mathrm{L}(g \circ \frakS)(\bar{\x})},
    \end{align}
    where we used \cite[Exe. 4.8]{rockafellar2009variational} in the second equality and \cref{thm:dc gradient sub-consistency} (a) in the inequality, respectively.
\end{proof}

\section{Proof of \cref{pro:sufficient condition for descent assumption}}
\noindent We start with (b) and (c), as their proof are relatively simple.
\begin{proof}[Proof of  \cref{pro:sufficient condition for descent assumption} (b) and (c)]
    (b) By applying \cite[Prop. 4.2]{kume2024variableLong}, for any $\mu\in (0,(2\eta)^{-1}]$, 
    we can show that $\nb(\mor{f}{\mu}\circ\frakS)$ and $\nb(\mor{g}{\mu}\circ\frakS)$ are Lipschitz continuous, where their Lipschitz constants are $L_{\D\frakS}L_f + L_{\frakS}^2\mu^{-1}$ and $L_{\D\frakS}L_g + L_{\frakS}^2\mu^{-1}$, respectively. 
    Then, $\nabla F^{\abra{\mu}}$ is $\kappa_\mu$-Lipschitz continuous with $\kappa_\mu:=L_{\D \frakS}(L_f + L_g)+2L_{\frakS}^2 \mu^{-1}$.
    Thus, \cref{eq:descent assumption} immediately follows from the descent lemma (see, e.g., \cite[Lem. 5.7]{beck2017first}).

    (c) For any $\mu\in (0,(2\eta)^{-1}]$ and $\x\in\R^d$, we can deduce from the definition of the Moreau envelope that
    \begin{align}
    (\mor{\widehat{f}}{\mu}\circ \widehat{\frakS})(\x)
    &= \min_{\z\in\R^n, t\in\R} \cbra*{\eqsize{0.95}{f(\z)+t  + \frac{1}{2\mu}} \norm*{
    \begin{bmatrix}
        \frakS(\x) \\ h(\x)
    \end{bmatrix}-
    \begin{bmatrix}
        \z \\ t
    \end{bmatrix}}^2} \\
    &= (\mor{f}{\mu}\circ \frakS)(\x) + \min_{t\in\R}\cbra*{t+\frac{1}{2\mu}\abs*{t-h(\x)}^2} \\
    &= (\mor{f}{\mu}\circ \frakS)(\x) + h(\x) - \frac{\mu}{2},\\
    (\mor{\widehat{g}}{\mu}\circ \widehat{\frakS})(\x) &= (\mor{g}{\mu}\circ \frakS)(\x).
    \end{align}
    Then, we get $\widehat{F}^{\abra{\mu}}(\x) = h(\x)-\frac{\mu}{2} + F^{\abra{\mu}}(\x)$.
    On the other hand, the descent lemma for $h$ implies that, for all $\x,\y\in\R^d$,
    \begin{equation}\label{eq:descent lemma for h}
        h(\y)-\frac{\mu}{2}\le h(\x)-\frac{\mu}{2} + \abra{\nb h(\x), \y-\x} + \frac{L_{\nb h}}{2} \norm{\y-\x}^2
    \end{equation}
    holds.
    Therefore, by summing \cref{eq:descent assumption} and \cref{eq:descent lemma for h},
    we obtain
    \begin{equation}
       \widehat{F}^{\abra{\mu}}(\y)\le \widehat{F}^{\abra{\mu}}(\x) + \abra{\nb \widehat{F}^{\abra{\mu}}, \y-\x} + \frac{\widehat{\kappa}_\mu}{2} \norm{\y-\x}^2,
    \end{equation}
    where $\widehat{\kappa}_\mu:=\kappa_\mu + L_{\nb h}$. 
\end{proof}

We use the following fact and lemma to show \cref{pro:sufficient condition for descent assumption} (a).
\setcounter{dfn}{0}
\begin{fac}[Weak convexity of composite functions {\cite[Lem. 4.2]{drusvyatskiy2019efficiency}}]\label{eq:weak convexity of composite function}
Let a function $\map{\psi}{\R^n}{\R}$ be convex and $L_\psi$-Lipschitz continuous, and suppose that a mapping $\map{\mathcal{F}}{\R^d}{\R^n}$ is differentiable and $\D\mathcal{F}$ is $L_{\D\mathcal{F}}$-Lipschitz continuous.
Then, $\psi\circ\mathcal{F}$ is $L_\psi L_{\D\mathcal{F}}$-weakly convex.
\end{fac}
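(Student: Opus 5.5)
The plan is a direct verification of the definition of weak convexity. We need to show that $\psi\circ\mathcal{F} + \frac{L_\psi L_{\D\mathcal{F}}}{2}\norm{\cdot}^2$ is convex. Since $\psi\circ\mathcal{F}$ is real-valued and continuous, it suffices to establish the two-point inequality
\begin{equation}
  \psi(\mathcal{F}(\y)) \ge \psi(\mathcal{F}(\x)) + \abra{\D\mathcal{F}(\x)^T\bm{v}, \y-\x} - \frac{L_\psi L_{\D\mathcal{F}}}{2}\norm{\y-\x}^2
\end{equation}
for all $\x,\y\in\R^d$ and some $\bm{v}\in\del\psi(\mathcal{F}(\x))$. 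This says that the affine-minus-quadratic function on the right is a global minorant touching at $\x$. Equivalently, $\psi\circ\mathcal{F}+\frac{\rho}{2}\norm{\cdot}^2$ with $\rho:=L_\psi L_{\D\mathcal{F}}$ admits an affine minorant exact at every point, and hence it is a supremum of affine functions, i.e., convex.

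The key steps, in order, are as follows.
\begin{enumerate}
  \item Fix $\x$ and choose $\bm{v}\in\del\psi(\mathcal{F}(\x))$. This is possible because $\psi$ is convex and finite. Lipschitz continuity of $\psi$ gives $\norm{\bm{v}}\le L_\psi$.
  \item By convexity, $\psi(\mathcal{F}(\y))\ge\psi(\mathcal{F}(\x))+\abra{\bm{v},\mathcal{F}(\y)-\mathcal{F}(\x)}$.
  \item Split $\mathcal{F}(\y)-\mathcal{F}(\x)=\D\mathcal{F}(\x)[\y-\x]+\bm{r}$. The remainder is $\bm{r}=\int_0^1(\D\mathcal{F}(\x+t(\y-\x))-\D\mathcal{F}(\x))[\y-\x]\,dt$, so the Lipschitz continuity of $\D\mathcal{F}$ yields $\norm{\bm{r}}\le\frac{L_{\D\mathcal{F}}}{2}\norm{\y-\x}^2$.
  \item By Cauchy--Schwarz, $\abra{\bm{v},\bm{r}}\ge-\frac{L_\psi L_{\D\mathcal{F}}}{2}\norm{\y-\x}^2$, which gives the displayed inequality.
\end{enumerate}

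To conclude convexity, define $\Gamma:=\psi\circ\mathcal{F}+\frac{\rho}{2}\norm{\cdot}^2$. Expanding $\frac{\rho}{2}\norm{\y}^2=\frac{\rho}{2}\norm{\x}^2+\rho\abra{\x,\y-\x}+\frac{\rho}{2}\norm{\y-\x}^2$ turns the inequality into $\Gamma(\y)\ge\Gamma(\x)+\abra{\D\mathcal{F}(\x)^T\bm{v}+\rho\x,\y-\x}$ for all $\y$. Thus $\Gamma=\sup_{\x}$ of these affine functions, which is convex.

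The only mildly delicate point is the remainder bound in the third step, which requires the integral form of the mean value theorem for $\mathcal{F}$. Continuity of $\D\mathcal{F}$, ensured by its Lipschitz property, justifies this. The rest is routine.
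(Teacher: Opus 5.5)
Your proof is correct. The paper gives no proof of this Fact and imports it from \cite[Lem.~4.2]{drusvyatskiy2019efficiency}, and your argument is essentially the standard proof behind that lemma: the subgradient inequality for $\psi$ at $\mathcal{F}(\x)$ with $\norm{\bm{v}}\le L_\psi$, the linearization remainder bound $\frac{L_{\D\mathcal{F}}}{2}\norm{\y-\x}^2$, and convexity of $\psi\circ\mathcal{F}+\frac{L_\psi L_{\D\mathcal{F}}}{2}\norm{\cdot}^2$ as a supremum of affine minorants that are exact at each point (this tacitly takes $L_{\D\mathcal{F}}$ in the operator norm, which is how the paper verifies it for $\D\frakS_{\text{RPR}}$).
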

\begin{lemma}\label{lem:Lipschitz differentiability of composite function}
Let $\mathcal{X},\mathcal{Z}$ be Euclidean spaces.
Consider a closed set $\mathcal{D}\subset \mathcal{Z}$ and its complement $\mathcal{E}:=\mathcal{Z}\setminus \mathcal{D}$.
For a function $\map{J}{\mathcal{Z}}{\R}$ and a mapping $\map{\mathcal{F}}{\mathcal{X}}{\mathcal{Z}}$, assume that
\begin{enumerate}
    \renewcommand{\labelenumi}{(\roman{enumi})}
    \item $J$ is differentiable and $\nb J$ is $L_{\nb J}$-Lipschitz continuous on $\mathcal{Z}$.
    \item $J$ is $L_J$-Lipschitz continuous on $\mathcal{Z}$,
    and thus, $\norm{\nb J(\cdot)}$ is bounded above by $L_J$ on $\mathcal{Z}$ \cite[Thm. 9.7]{rockafellar2009variational}, i.e., 
    \begin{equation}
        (\forall \z\in\mathcal{Z})\quad\norm{\nb J(\z)} \le L_J.
    \end{equation}
    \item $J$ is affine on $\mathcal{D}$, i.e.,
    \begin{equation}
        (\exists \bm{u} \in \mathcal{Z}, \exists \tau\in\mathbb{R}, \forall \bm{z} \in\mathcal{Z})\quad J(\z) = \abra*{\bm{u},\z} + \tau.
    \end{equation}
    \item $\mathcal{F}$ is differentiable and $\D\mathcal{F}$ is $L_{\D\mathcal{F}}$-Lipschitz continuous on $\mathcal{X}$.
    \item $\norm{\D\mathcal{F}(\cdot)}_\mathrm{op}$ is bounded above by $M>0$ on $\mathcal{F}^{-1}(\mathcal{E}):=\Set*{\bm{v}\in\mathcal{X}}{\mathcal{F}(\bm{v})\in\mathcal{E}}$.%
\end{enumerate}
Then, $\nb (J\circ\mathcal{F})$ is $L_{\nb (J\circ\mathcal{F})}$-Lipschitz continuous on $\mathcal{X}$ with $L_{\nb (J\circ\mathcal{F})}:=M^2L_{\nb J}+L_JL_{\D\mathcal{F}}$.
\end{lemma}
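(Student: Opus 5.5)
The plan is to write $\nb(J\circ\mathcal{F})(\x)=\D\mathcal{F}(\x)^T\nb J(\mathcal{F}(\x))$ by the chain rule. Fix $\x,\y\in\mathcal{X}$. The proof has three steps: a two-term splitting, a case distinction according to whether $\mathcal{F}(\x)$ and $\mathcal{F}(\y)$ lie in $\mathcal{D}$ or $\mathcal{E}$, and a segment argument for the hardest term.

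\textbf{Step 0: the gradient of $J$ on $\mathcal{D}$.} I will use that $\nb J\equiv\bm{u}$ on $\mathcal{D}$. This is how I read (iii): the affine representation $J(\z)=\abra{\bm{u},\z}+\tau$ on $\mathcal{D}$ is meant to fix the gradient there. It follows directly if $\mathcal{D}$ is the closure of its interior, by continuity of $\nb J$, and that is the situation in the applications, where $\mathcal{D}$ is a region like $\{|z|\ge\beta\lambda\}$. I expect this step, together with Step 3, to be the main point needing care; if needed I would add it to the hypothesis. Note also that $\norm{\bm{u}}\le L_J$ by (ii).

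\textbf{Step 1: the splitting.} I split
\begin{multline}
\nb(J\circ\mathcal{F})(\x)-\nb(J\circ\mathcal{F})(\y)
=\D\mathcal{F}(\x)^T\bigl(\nb J(\mathcal{F}(\x))-\nb J(\mathcal{F}(\y))\bigr)\\
+\bigl(\D\mathcal{F}(\x)-\D\mathcal{F}(\y)\bigr)^T\nb J(\mathcal{F}(\y)).
\end{multline}
By (ii) and (iv), the second term always has norm at most $L_JL_{\D\mathcal{F}}\norm{\x-\y}$.

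\textbf{Step 2: the case distinction.} If $\mathcal{F}(\x),\mathcal{F}(\y)\in\mathcal{D}$, then both gradients of $J$ equal $\bm{u}$ by Step 0, so the first term vanishes and the bound follows. Otherwise, I swap the roles of $\x$ and $\y$ if necessary so that $\mathcal{F}(\x)\in\mathcal{E}$. Then $\norm{\D\mathcal{F}(\x)}_{\mathrm{op}}\le M$ by (v). It therefore suffices to prove
\begin{equation}
\norm{\nb J(\mathcal{F}(\x))-\nb J(\mathcal{F}(\y))}\le ML_{\nb J}\norm{\x-\y}.
\end{equation}

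\textbf{Step 3: the segment argument.} Put $\x_t:=\x+t(\y-\x)$ and $\psi(t):=\nb J(\mathcal{F}(\x_t))$, which is continuous on $[0,1]$.

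If $\mathcal{F}(\x_t)\in\mathcal{E}$ for all $t\in[0,1]$, then $\norm{\D\mathcal{F}(\x_t)}_{\mathrm{op}}\le M$ along the whole segment. The integral mean value theorem gives $\norm{\mathcal{F}(\x)-\mathcal{F}(\y)}\le M\norm{\x-\y}$, and (i) finishes this case.

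Otherwise, since $\mathcal{D}$ is closed, I let $s$ and $s'$ be the smallest and largest $t\in[0,1]$ with $\mathcal{F}(\x_t)\in\mathcal{D}$. Then $\psi(s)=\psi(s')=\bm{u}$ by Step 0. On $[0,s)$ and on $(s',1]$ the segment lies in $\mathcal{F}^{-1}(\mathcal{E})$. Applying the previous argument on $[0,s-\epsilon]$ and letting $\epsilon\searrow0$ (by continuity) gives $\norm{\psi(0)-\psi(s)}\le L_{\nb J}Ms\norm{\x-\y}$. Likewise $\norm{\psi(s')-\psi(1)}\le L_{\nb J}M(1-s')\norm{\x-\y}$. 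Adding the two bounds gives at most $ML_{\nb J}\norm{\x-\y}$.

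Combining Steps 1–3, $\norm{\nb(J\circ\mathcal{F})(\x)-\nb(J\circ\mathcal{F})(\y)}\le (M^2L_{\nb J}+L_JL_{\D\mathcal{F}})\norm{\x-\y}$, which is the claimed Lipschitz constant.
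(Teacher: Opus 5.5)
Your proposal is correct and follows essentially the same route as the paper: the same two-term splitting, the first and last points of the segment lying in $\mathcal{F}^{-1}(\mathcal{D})$, the mean value inequality on the outer pieces where $\norm{\D\mathcal{F}}_{\mathrm{op}}\le M$, and the identity $\nb J\equiv\bm{u}$ on $\mathcal{D}$. The paper also uses that identity without comment, so your Step~0 caveat is well placed. The only difference is bookkeeping: you split once at $(\x,\y)$ and telescope $\nb J\circ\mathcal{F}$, whereas the paper telescopes $\nb(J\circ\mathcal{F})$ over the three sub-segments $[\x,\btilde{x}]$, $[\btilde{x},\btilde{y}]$, $[\btilde{y},\y]$; both give the same constant.
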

\begin{proof}
By using the chain rule for $\nb(J\circ\mathcal{F})$, it holds for all $\x,\y\in\R^d$ that 
\begin{align}
    &\norm*{\nb(J\circ\mathcal{F})(\x)-\nb(J\circ\mathcal{F})(\y)}\\
    &= \norm[\big]{(\D\mathcal{F}(\x))^*[\nb J(\mathcal{F}(\x))]- (\D\mathcal{F}(\y))^*[\nb J(\mathcal{F}(\y))]}\\
    &\begin{multlined}[\linewidth]
        \le\norm[\big]{(\D\mathcal{F}(\x))^*[\nb J(\mathcal{F}(\x))-\nb J(\mathcal{F}(\y))]}\\
        +\norm[\big]{(\D\mathcal{F}(\x)-\D\mathcal{F}(\y))^*[\nb J(\mathcal{F}(\y))]}
    \end{multlined}\\
    &\begin{multlined}[\linewidth]
        \le \norm{\D\mathcal{F}(\x)}_{\mathrm{op}}\norm{\nb J(\mathcal{F}(\x))-\nb J(\mathcal{F}(\y))}\\
        + \norm{\D\mathcal{F}(\x)-\D\mathcal{F}(\y)}_{\mathrm{op}}\norm{\nb J(\mathcal{F}(\y))}
    \end{multlined}\\
    &\eqsize{0.95}{\le \norm{\D\mathcal{F}(\x)}_{\mathrm{op}}\norm{\nb J(\mathcal{F}(\x))-\nb J(\mathcal{F}(\y))} + L_JL_{\D\mathcal{F}}\norm{\x-\y},\label{eq:triangle inequality}}
\end{align}
where the last inequality follows from the assumptions (ii) and (iv).
(Note: $X^*$ stands for the \textit{adjoint operator} of the linear operator $X$.
When $X$ is regarded as a matrix, $X^*$ coincides with its transpose $X^T$.)

Here, we consider two cases according to whether the open line segment $\ell(\x,\y):=\Set{t\x+(1-t)\y}{0<t<1}$ intersects $\mathcal{F}^{-1}(\mathcal{D})$  or not.

\underline{\textbf{Case 1:}} $\ell(\x,\y)\cap \mathcal{F}^{-1}(\mathcal{D}) = \emptyset$\\
Since $\norm*{\nb(J\circ\mathcal{F})(\x)-\nb(J\circ\mathcal{F})(\y)} \le L_{\nb (J\circ\mathcal{F})}\norm{\x-\y}$ obviously holds in the trivial case $\x=\y$, we assume $\x\neq \y$.
From $\ell(\x,\y)\cap \mathcal{F}^{-1}(\mathcal{D}) = \emptyset$, we have
\begin{align}\label{eq:inclusion of line segemnt}
    \ell(\x,\y)&\subset \mathcal{X} \setminus \mathcal{F}^{-1}(\mathcal{D})=\mathcal{F}^{-1}\rbra*{\mathcal{Z}\setminus \mathcal{D}}=\mathcal{F}^{-1}(\mathcal{E}).
\end{align}
Then, the assumption (v) implies that
\begin{equation}\label{eq:boundedness of DF on l}
    \norm{\D\mathcal{F}(\bm{v})}_{\mathrm{op}} \le M \quad(\bm{v}\in \ell(\x,\y)).
\end{equation}
Moreover, from the continuity of $\norm{\D \mathcal{F}(\cdot)}_\mathrm{op}$,
we have
\begin{equation}\label{eq:upper bound of DF(x)}
    \norm{\D\mathcal{F}(\x)}_\mathrm{op}=\lim_{t\nearrow 1}\norm{\D\mathcal{F}(t\x+(1-t)\y)}_\mathrm{op} \leq \lim_{t\nearrow 1} M =M,
\end{equation}
and we also get $\norm{\D\mathcal{F}(\y)}_\mathrm{op}\le M$ in the same manner.
Thus, the mean value inequality (see, e.g., \cite[p.155]{Dieudonné1960Foundations}) yields that
\begin{align}\label{eq:Lipschitz continuity of F on l}
    &\norm{\mathcal{F}(\x)-\mathcal{F}(\y)} \leq \sup_{0\le t \le 1}\norm{\D\mathcal{F}(t\x+(1-t)\y)}_\mathrm{op}\norm{\x-\y}\\
    &= \sup_{\bm{v}\in \ell(\x,\y)\cup\{\x,\y\}}\norm{\D\mathcal{F}(\bm{v})}_\mathrm{op}\norm{\x-\y} \leq M\norm{\x-\y}.
\end{align}
Therefore, by combining the assumption (i) and \cref{eq:upper bound of DF(x)}, we have
\begin{align}
    &\norm{\D\mathcal{F}(\x)}_{\mathrm{op}}\norm{\nb J(\mathcal{F}(\x))-\nb J(\mathcal{F}(\y))}\\
    &\le\norm{\D\mathcal{F}(\x)}_{\mathrm{op}}M L_{\nb J}\norm{\x-\y} \le M^2 L_{\nb J}\norm{\x-\y}.\label{eq:Lipschitz inequality for first term}
\end{align}
From \cref{eq:triangle inequality} and \cref{eq:Lipschitz inequality for first term}, we obtain
\begin{equation}
    \norm*{\nb(J\circ\mathcal{F})(\x)-\nb(J\circ\mathcal{F})(\y)} \le L_{\nb (J\circ\mathcal{F})}\norm{\x-\y}.
\end{equation}

\begin{figure}[t]
\centering

\begin{tikzpicture}

    \def\mcurve{(-2.5,-0.5)
    .. controls (-1.7,1.3) and (-1.1,1.3) .. (-0.5,0.5)
    .. controls (-0.1,-0.2) and (0.4,-0.2) .. (0.75,0.5)
    .. controls (1.2,1.3) and (1.8,1.3) .. (2.5,-0.5)
    }

    \path[name path=curve] \mcurve;

    \coordinate (x) at (-3,0.5);
    \coordinate (y) at (3,0.7);
    
    \path[name path=line] (x) -- (y);

    \path[name intersections={of=curve and line, name=i}];

    \fill[blue!20] \mcurve -- (3.5,-0.5)-- (3.5,1.5) -- (-3.5,1.5) -- (-3.5, -0.5) -- cycle;
    \fill[red!20]  \mcurve -- (2.5,-0.5) -- (-2.5,-0.5) -- cycle;

    \draw \mcurve;
    \fill (x) circle (1.5pt);
    \node[above] at (x) {$\x$};
    \fill (y) circle (1.5pt);
    \node[above] at (y) {$\y$};
    \draw[thick] (x) -- (y);
    
    \fill[red] (i-1) circle (1.5pt);
    \node[above] at (i-1) {$\btilde{x}$};
    \fill[red] (i-4) circle (1.5pt);
    \node[above] at (i-4) {$\btilde{y}$};

    \node at (0.1,1.1) {$\mathcal{F}^{-1}(\mathcal{E})$};
    \node at (-1,-0.2) {$\mathcal{F}^{-1}(\mathcal{D})$};
\end{tikzpicture}

\caption{Illustration of the positions of $\btilde{x}$ and $\btilde{y}$}
\label{fig:Illustration of the positions of x_tilde and y_tilde}
\end{figure}

\underline{\textbf{Case 2:}} $\ell(\x,\y)\cap \mathcal{F}^{-1}(\mathcal{D}) \ne \emptyset$\\
We define points $\btilde{x},\btilde{y}$ on the closed line segment $\mathrm{cl}(\ell(\x,\y))=\ell(\x,\y)\cup\{\x,\y\}$ as
\begin{align}
    &\btilde{x}:=\argmin{\bm{v}\in \mathfrak{I}}\norm{\x-\bm{v}},\ \btilde{y}:=\argmin{\bm{v}\in \mathfrak{I}}\norm{\y-\bm{v}}\\
    &\text{with}\quad \mathfrak{I}:=\mathrm{cl}\rbra*{\ell(\x,\y)}\cap \mathcal{F}^{-1}(\mathcal{D})
\end{align}
(see \cref{fig:Illustration of the positions of x_tilde and y_tilde} for an intuitive illustration). Note that $\btilde{x}$ and $\btilde{y}$ are well-defined due to the compactness of $\mathfrak{I}$.
For $\btilde{x},\btilde{y}$, there exist $t_{\tilde{x}}, t_{\tilde{y}} \in [0,1]$ such that $t_{\tilde{y}} \leq t_{\tilde{x}}$ and 
$\btilde{x}=t_{\tilde{x}} \x+ (1-t_{\tilde{x}})\y, \btilde{y}=t_{\tilde{y}} \x+ (1-t_{\tilde{y}})\y$.
From $\x-\btilde{x} =  (1-t_{\tilde{x}})(\x-\y),\ \btilde{x}-\btilde{y}=(t_{\tilde{x}} - t_{\tilde{y}})(\x-\y),\ \btilde{y}-\y=t_{\tilde{y}}(\x-\y)$, we have
\begin{equation}\label{eq:decomposition of line segment}
    \norm{\x-\y}=\norm{\x-\btilde{x}}+\norm{\btilde{x}-\btilde{y}}+\norm{\btilde{y}-\y}.
\end{equation}
In what follows, we consider the three line segments $\ell(\x,\btilde{x}),\ \ell(\btilde{y},\y)$, and $\ell(\btilde{x},\btilde{y})$.

Because $\ell(\x,\btilde{x})\cap \mathcal{F}^{-1}(\mathcal{D})=\emptyset$ and $\ell(\btilde{y},\y)\cap \mathcal{F}^{-1}(\mathcal{D})=\emptyset$ follow from the definition of $\btilde{x}$ and $\btilde{y}$, the same argument as the Case 1 yields that 
\begin{align}
    &\norm*{\nb(J\circ\mathcal{F})(\x)-\nb(J\circ\mathcal{F})(\btilde{x})} \le L_{\nb (J\circ\mathcal{F})}\norm{\x-\btilde{x}}, \label{eq:Lipschitz continuity on x and xtilde} \\
    &\norm*{\nb(J\circ\mathcal{F})(\y)-\nb(J\circ\mathcal{F})(\btilde{y})} \le L_{\nb (J\circ\mathcal{F})}\norm{\y-\btilde{y}}.\label{eq:Lipschitz continuity on y and ytilde}
\end{align}
For $\ell(\btilde{x},\btilde{y})$, the assumption (iii) together with $\btilde{x},\btilde{y}\in \mathcal{F}^{-1}(\mathcal{D})$ implies that
\begin{align}
    \norm{\nb J(\mathcal{F}(\btilde{x}))-\nb J(\mathcal{F}(\btilde{y}))} = \norm{\bm{u} - \bm{u}}=0,
\end{align}
and then, \cref{eq:triangle inequality} with the substitution $(\x,\y)=(\btilde{x},\btilde{y})$ yields
\begin{equation}\label{eq:Lipschitz continuity on xtilde and ytilde}
    \norm*{\nb(J\circ\mathcal{F})(\btilde{x})-\nb(J\circ\mathcal{F})(\btilde{y})} \le L_JL_{\D\mathcal{F}}\norm{\btilde{x}-\btilde{y}}.
\end{equation}
By using \cref{eq:Lipschitz continuity on x and xtilde,eq:Lipschitz continuity on xtilde and ytilde,eq:Lipschitz continuity on y and ytilde}, we obtain
\begin{align}
    &\norm*{\nb(J\circ\mathcal{F})(\x)-\nb(J\circ\mathcal{F})(\y)}\\
    &
    \begin{multlined}[t][\linewidth]
        \le\norm*{\nb(J\circ\mathcal{F})(\x)-\nb(J\circ\mathcal{F})(\btilde{x})}\\
        +\norm*{\nb(J\circ\mathcal{F})(\btilde{x})-\nb(J\circ\mathcal{F})(\btilde{y})}\\
        +\norm*{\nb(J\circ\mathcal{F})(\btilde{y})-\nb(J\circ\mathcal{F})(\y)}
    \end{multlined}\\
    &\le\eqsize{0.97}{ L_{\nb (J\circ\mathcal{F})}\norm*{\x-\btilde{x}}+L_JL_{\D\mathcal{F}}\norm{\btilde{x}-\btilde{y}} + L_{\nb (J\circ\mathcal{F})}\norm*{\btilde{y}-\y}}\\
    &\le L_{\nb (J\circ\mathcal{F})}(\norm*{\x-\btilde{x}}+\norm{\btilde{x}-\btilde{y}}+\norm*{\btilde{y}-\y})\\
    &=L_{\nb (J\circ\mathcal{F})}\norm{\x-\y}. \qquad(\because \cref{eq:decomposition of line segment})
\end{align}

\end{proof}
\begin{proof}[Proof of \cref{pro:sufficient condition for descent assumption} (a)]
The proof is completed by showing that there exist $\varpi_1', \varpi_1'',\varpi_2,\in\R_{++}$ such that 

\begin{align}
    -(\mor{g}{\mu}&\circ \frakS_{\text{RPR}})(\y) \le -(\mor{g}{\mu}\circ \frakS_{\text{RPR}})(\x) \\
    &- \abra{\nb (\mor{g}{\mu}\circ \frakS_{\text{RPR}})(\x), \y-\x} + \frac{\varpi_1'}{2}\norm{\y-\x}^2,&\label{eq:descent lemma for goS} \\
    (\mor{f}{\mu}&\circ \frakS_{\text{RPR}})(\y) \le (\mor{f}{\mu}\circ \frakS_{\text{RPR}})(\x)\\
    + \abra{&\nb (\mor{f}{\mu}\circ \frakS_{\text{RPR}})(\x), \y-\x} \eqsize{0.95}{+ \frac{\varpi_1''+\varpi_2\mu^{-1}}{2}\norm{\y-\x}^2,} \label{eq:descent lemma for foS}
\end{align}
because \cref{eq:descent assumption} is obtained by summing \cref{eq:descent lemma for goS} and \cref{eq:descent lemma for foS}, and by setting $\varpi_1:=\varpi_1'+\varpi_1''$.
In the following, we show \cref{eq:descent lemma for goS} and \cref{eq:descent lemma for foS} separately.

For \cref{eq:descent lemma for goS}, it is enough to prove the following inequality:
\begin{align}
    (\mor{g}{\mu}\circ \frakS_{\text{RPR}})(\y) + \frac{\varpi_1'}{2}\norm{\y}^2 \ge (\mor{g}{\mu}\circ \frakS_{\text{RPR}})(\x) + \frac{\varpi_1'}{2}\norm{\x}^2& \\
    + \abra*{\nb (\mor{g}{\mu}\circ \frakS_{\text{RPR}})(\x)+\varpi_1'\x,\y-\x}&.
\end{align}
This is equivalent to the convexity of
$(\mor{g}{\mu}\circ \frakS_{\text{RPR}})+\frac{\varpi_1'}{2}\norm{\cdot}^{2}$, that is, $\varpi_1'$-weak convexity of $\mor{g}{\mu}\circ\frakS_{\text{RPR}}$.
To show this weak convexity, we can use \cref{eq:weak convexity of composite function} with $\psi:=\mor{g}{\mu}$  and $\mathcal{F}:=\frakS_{\text{RPR}}$.
Indeed, the assumptions of \cref{eq:weak convexity of composite function} are easily checked as follows:
(i) $\mor{g}{\mu}$ is $L_g$-Lipschitz continuous because $g$ is $L_g$-Lipschitz continuous (see \cite[Lem. 3.3]{bohm2021variable});
(ii) since every choice of $g$ in \cref{tab:options of phi} is convex, $\mor{g}{\mu}$ is also convex \cite[Thm. 6.55]{beck2017first};
(iii) the derivative $\D\frakS_{\text{RPR}}:\x\mapsto [2\abra{\ba_1,\x }\ba_1,2\abra{\ba_2,\x }\ba_2,...,2\abra{\ba_n,\x }\ba_n]^T$ is $(2\sqrt{\sum_{i=1}^n \norm{\ba_i}^4})$-Lipschitz continuous
because we have, for all $\x,\y\in\R^d$,
\begin{align}
    &\norm{\D\frakS_{\text{RPR}}(\x) - \D\frakS_{\text{RPR}}(\y)}_{\text{op}}\\
    &= \sup_{\|\bm{v}\| \le 1} \norm{\rbra*{\D\frakS_{\text{RPR}}(\x) - \D\frakS_{\text{RPR}}(\y)}[\bm{v}]}\\
    &= \sup_{\|\bm{v}\| \le 1} \sqrt{\sum_{i=1}^n \rbra*{2\abra{\ba_i,\x-\y} \abra{\ba_i,\bm{v}}}^2}\\
    &\le \sup_{\|\bm{v}\| \le 1} \sqrt{\sum_{i=1}^n \rbra*{2\norm{\ba_i}\norm{\x-\y} \norm{\ba_i}\norm{\bm{v}}}^2}\\
    &\le 2\sqrt{\sum_{i=1}^n\norm{\ba_i}^4} \norm{\x-\y}. \label{eq:Lipschitz continuity of DS_RPR}
\end{align}
As a result, the weak convexity of $\mor{g}{\mu}\circ \frakS_{\text{RPR}}$ is proven, and therefore, \cref{eq:descent lemma for goS} holds with $\varpi_1':=2L_g\sqrt{\sum_{i=1}^n \norm{\ba_i}^4}$.

To prove \cref{eq:descent lemma for foS}, we show that $\nb (\mor{f}{\mu}\circ \frakS_{\text{RPR}})$ is ($\varpi_1''+\varpi_2\mu^{-1}$)-Lipschitz continuous, which is a sufficient condition of \cref{eq:descent lemma for foS} as stated in the descent lemma (see, e.g., \cite[Lem. 5.7]{beck2017first}).
We note that all $f$ in \cref{tab:options of phi} have the form $f(\z)=\lambda\sum_{i=1}^n \abs{[\z]_i}\ (\z\in\R^n)$ with some $\lambda\in\R_{++}$,
and thereby, their Moreau envelopes can be derived as $\mor{f}{\mu}(\z)=\sum_{i=1}^n\mor{(\lambda\abs{\cdot})}{\mu}([\z]_i) = \sum_{i=1}^n\widehat{r}_{\lambda,\mu}([\z]_i)$ \cite[Lem. 6.57,\ Exm. 6.59]{beck2017first} (see \cref{tab:options of phi} for the definition of $\widehat{r}_{\lambda,\mu}$).
In order to invoke \cref{lem:Lipschitz differentiability of composite function} with $J:=\widehat{r}_{\lambda,\mu}$, $\mathcal{F}:=\frakS_{\text{RPR},i}:=\abra{\ba_i,\cdot\ }^2 - [\bb]_i\ (i\in\{1,2,\ldots,n\})$, and $(\mathcal{D},\mathcal{E}):=([\mu\lambda,\infty),(-\infty,\mu\lambda))$,
we check that the assumptions (i)-(v) in \cref{lem:Lipschitz differentiability of composite function} hold.
\begin{enumerate}
    \renewcommand{\labelenumi}{(\roman{enumi})}
    \item $\nb \widehat{r}_{\lambda,\mu}=\nb \mor{(\lambda\abs{\cdot})}{\mu}$ is $\mu^{-1}$-Lipschitz continuous from \cite[Thm. 6.60]{beck2017first}.
    \item From the definition of $\widehat{r}_{\lambda,\mu}$, it is obviously $\lambda$-Lipschitz continuous.
    \item $\widehat{r}_{\lambda,\mu}(t)=t-\mu\lambda^2/2$ for $t \ge \mu\lambda$, i.e., it is affine on $[\mu\lambda,\infty)$.
    \item \scalebox{0.97}[1]{$\D \frakS_{\text{RPR},i}=2\abra{\ba_i,\cdot\, }\ba_i^T$ is $(2\norm{\ba_i}^2)$-Lipschitz continuous.}
    \item It holds for any $\x \in \frakS_{\text{RPR},i}^{-1}\rbra[\big]{(-\infty,\mu\lambda)}$ that\\
        $\norm{\D\frakS_{\text{RPR},i}(\x)}_\mathrm{op}=\norm{2\abra{\ba_i,\x}\ba_i^T}_{\text{op}}=2\norm{\ba_i}\abs{\abra{\ba_i,\x}}=2\norm{\ba_i} \sqrt{\frakS_{\text{RPR},i}(\x) + [\bb]_i} < 2\norm{\ba_i}\sqrt{\mu\lambda+\abs{[\bb]_i}}.$
\end{enumerate}
From above, \cref{lem:Lipschitz differentiability of composite function} yields that $\nb (\widehat{r}_{\lambda,\mu}\circ\frakS_{\text{RPR},i})$ is Lipschitz continuous with a Lipschitz constant
$4\norm{\ba_i}^2(\mu\lambda+\abs{[\bb]_i})\mu^{-1}+  \lambda (2\norm{\ba_i}^2) =  6\norm{\ba_i}^2\lambda + 4\norm{\ba_i}^2\abs{[\bb]_i} \mu^{-1}$.
Then, for all $\x,\y\in\R^d$, we have
\begin{align}
    &\norm{\nabla (\mor{f}{\mu}\circ\frakS_{\text{RPR}})(\x) - \nabla (\mor{f}{\mu}\circ\frakS_{\text{RPR}})(\y)}^{2}\\
    &\leq \sum_{i=1}^n (6\norm{\ba_i}^2\lambda + 4\norm{\ba_i}^2\abs{[\bb]_i} \mu^{-1})^2 ([\x]_i - [\y]_i)^2 \\
    &\leq \max_{1\le i\le n} (6\norm{\ba_i}^2\lambda + 4\norm{\ba_i}^2\abs{[\bb]_i} \mu^{-1})^2 \sum_{j=1}^n ([\x]_j - [\y]_j)^2 \\
    &= \rbra[\Big]{\max_{1\le i\le n} (6\norm{\ba_i}^2\lambda + 4\norm{\ba_i}^2\abs{[\bb]_i} \mu^{-1})}^2 \sum_{j=1}^n ([\x]_j - [\y]_j)^2 \\
    &\le \rbra[\Big]{6\lambda\max_{1\le i\le n}\norm{\ba_i}^2 + 4\mu^{-1}\max_{1\le i\le n}\norm{\ba_i}^2\abs{[\bb]_i}}^2 \norm{\x-\y}^{2}.
\end{align}
Thus, $\nb (\mor{f}{\mu}\circ\frakS_{\text{RPR}})$ is $(\varpi_1''+\varpi_2\mu^{-1})$-Lipschitz continuous,
where $\eqsize{0.99}{\varpi_1'' := 6\lambda\max_{1\le i\le n} \norm{\ba_i}^2,\ \varpi_2:=4 \max_{1\le i\le n}\norm{\ba_i}^2\abs{[\bb]_i}}$.

\end{proof}

\section{Proof of \cref{exa:initial guess satisfying assumption}}
\noindent We show that the choices of initial guesses $(\gamma_{\text{init},k})_{k=1}^\infty$ in \cref{exa:initial guess satisfying assumption} satisfy \cref{asm:initial guess of backtracking}.
\begin{proof}[Proof of \cref{exa:initial guess satisfying assumption}]
    (a) The inequality \cref{eq:assumption on initial guess of backtracking} is obviously satisfied with $\delta:=2(1-c)$.

    (b) Since $\seq{\mu}{k}$ is non-increasing from \cref{eq:conditions of mu}(iii),
    $\kappa_{\mu_k}:=\varpi_1 + \varpi_2/\mu_k$ is non-decreasing, and then, we have $\kappa_{\mu_k} \ge \kappa_{\mu_1}\ (k\in\N)$.
    Hence, the inequality $\gamma_{\text{init},k}:=\gamma_{\text{init}}\ge \delta \kappa_{\mu_k}^{-1} $ holds with $\delta := \gamma_{\text{init}}\kappa_{\mu_1}$.

    (c) By the induction, we show $\gamma_{\text{init},k} \geq \delta \kappa_{\mu_{k}}^{-1}\ (\forall k\in \mathbb{N})$ in \cref{eq:assumption on initial guess of backtracking} with $\delta := \delta_0:= \min \{\gamma_0\kappa_{\mu_1},2(1-c)\rho\}$. 
    For $\gamma_{\text{init},1} := \gamma_0$, we have $\gamma_{\text{init}, 1} = (\gamma_{0}\kappa_{\mu_{1}})\kappa_{\mu_{1}}^{-1}\ge \delta_{0}\kappa_{\mu_{1}}^{-1}$.
    On the other hand, by using \cref{lem:finite termination of backtracking} (a) and the induction hypothesis $\gamma_{\text{init},k} \ge \delta_0 \kappa_{\mu_k}^{-1}$, we obtain
    $\gamma_{\text{init},k+1} := \gamma_k \ge \min\{\gamma_{\text{init}, k}, 2(1-c)\kappa_{\mu_{k}}^{-1}\rho\} \ge \min \cbra*{\delta_0,2(1-c)\rho}\kappa_{\mu_k}^{-1} = \delta_0 \kappa_{\mu_k}^{-1} \ge \delta_0 \kappa_{\mu_{k+1}}^{-1}$,
    where the last equality follows from $2(1-c)\rho \ge \delta_0$, and the last inequality holds since  $(\kappa_{\mu_k})_{k=1}^\infty$ is non-decreasing.
    This completes the inductive proof.
\end{proof}

\section{Proof of \cref{thm:convergence theorem}}
\noindent
We make use of the next fact to prove \cref{thm:convergence theorem}.
\begin{fac}[Properties of Moreau envelope of Lipschitz continuous function]\label{fac:properties of Moreau envelope of Lipschitzian}
    Let a function $\map{\psi}{\R^n}{\R}$ be $\eta_\psi$-weakly convex and $L_\psi$-Lipschitz continuous.
    Then, for any $\z\in\R^n$ and $\mu_{\rmone},\mu_{\rmtwo}\in(0,\eta_\psi)$ such that $\mu_{\rmtwo}<\mu_{\rmone}$, we have
    \begin{equation}\label{eq:gap between two moreau envelopes}
        \mor{\psi}{\mu_{\rmone}}(\z) \le \mor{\psi}{\mu_{\rmtwo}}(\z) \le \mor{\psi}{\mu_{\rmone}}(\z) + (\mu_\rmone - \mu_\rmtwo) L_\psi^2
    \end{equation}
    (see \cite[Eq. (21)]{kume2024variableLong} and the subsequent discussion in \cite{kume2024variableLong}).
    Moreover, by letting $\mu_\rmtwo\searrow 0$, we obtain the following (see \cref{Properties of Moreau envelope} (a)):
    \begin{equation}\label{eq:gap between moreau and original}
        \mor{\psi}{\mu_{\rmone}}(\z) \le \psi(\z) \le \mor{\psi}{\mu_{\rmone}}(\z) + \mu_\rmone L_\psi^2. 
    \end{equation}
\end{fac}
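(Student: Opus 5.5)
The plan is to treat the two inequalities in \cref{eq:gap between two moreau envelopes} separately. I will work in the reciprocal parameter $s:=\mu^{-1}$, in which the Moreau envelope is an infimum of functions affine in $s$. Throughout, $\mu_{\rmone},\mu_{\rmtwo}$ are taken in the admissible range $(0,\eta_\psi^{-1})$ of \cref{moreau envelope}, so that $\prox{\mu\psi}$ is single-valued. Fix $\z\in\R^n$ and, for $\mu$ in this range, write $\bm{p}_\mu:=\prox{\mu\psi}(\z)$.

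\textbf{Left inequality.} For every $\bm{w}$, the map $\mu\mapsto\psi(\bm{w})+\frac{1}{2\mu}\norm{\bm{w}-\z}^2$ is nonincreasing. Taking the minimum over $\bm{w}$ preserves this, so $\mor{\psi}{\mu_{\rmone}}(\z)\le\mor{\psi}{\mu_{\rmtwo}}(\z)$ whenever $\mu_{\rmtwo}<\mu_{\rmone}$.

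\textbf{Right inequality.} I will use two ingredients.
\begin{enumerate}
\item A bound on the prox displacement. By the optimality condition for the strongly convex prox subproblem, $\mu^{-1}(\z-\bm{p}_\mu)\in\del_{\mathrm{L}}\psi(\bm{p}_\mu)$. Since $\psi$ is $L_\psi$-Lipschitz, every limiting subgradient has norm at most $L_\psi$ (\cite[Thm. 9.13]{rockafellar2009variational}). Hence $\norm{\z-\bm{p}_\mu}\le\mu L_\psi$; equivalently, $\norm{\nb\mor{\psi}{\mu}(\z)}\le L_\psi$ by \cref{Properties of Moreau envelope}~(b).
\item A supergradient inequality in $s$. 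Set $e(s):=\min_{\bm{w}}\{\psi(\bm{w})+\tfrac{s}{2}\norm{\bm{w}-\z}^2\}$, so that $e(\mu^{-1})=\mor{\psi}{\mu}(\z)$. For any $s,s'$ in the admissible range, plugging the minimizer $\bm{p}_{1/s}$ at $s$ into the objective at $s'$ gives
\begin{equation}
e(s')\le e(s)+\tfrac{s'-s}{2}\norm{\bm{p}_{1/s}-\z}^2 .
\end{equation}
\end{enumerate}
The point is to use this inequality with the \emph{left} endpoint when moving to a larger $s$.

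\textbf{Chaining.} Put $s_{\rmone}:=\mu_{\rmone}^{-1}<s_{\rmtwo}:=\mu_{\rmtwo}^{-1}$ and take a partition $s_{\rmone}=\sigma_0<\sigma_1<\dots<\sigma_N=s_{\rmtwo}$. Applying the supergradient inequality on each subinterval with base point $\sigma_{j-1}$, and then the displacement bound $\norm{\bm{p}_{1/\sigma}-\z}^2\le L_\psi^2\sigma^{-2}$, gives
\begin{equation}
e(s_{\rmtwo})-e(s_{\rmone})\le\sum_{j=1}^N\frac{L_\psi^2}{2\sigma_{j-1}^2}(\sigma_j-\sigma_{j-1}).
\end{equation}
The right-hand side is a Riemann sum of the continuous function $\sigma\mapsto L_\psi^2/(2\sigma^2)$ on the compact interval $[s_{\rmone},s_{\rmtwo}]$. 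As the mesh tends to zero it converges to
\begin{equation}
\int_{s_{\rmone}}^{s_{\rmtwo}}\frac{L_\psi^2}{2\sigma^2}\,d\sigma=\frac{L_\psi^2}{2}(\mu_{\rmone}-\mu_{\rmtwo}).
\end{equation}
This yields $\mor{\psi}{\mu_{\rmtwo}}(\z)\le\mor{\psi}{\mu_{\rmone}}(\z)+\tfrac12(\mu_{\rmone}-\mu_{\rmtwo})L_\psi^2$, which is even sharper than the claimed bound by a factor $\tfrac12$.

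\textbf{Limit.} Finally, \cref{eq:gap between moreau and original} follows by letting $\mu_{\rmtwo}\searrow0$ in both inequalities and using \cref{Properties of Moreau envelope}~(a).

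\textbf{Main obstacle.} The delicate step is the chaining. A single comparison between the two parameters fails in both directions. Using $\bm{p}_{\mu_{\rmone}}$ as a test point in the $\mu_{\rmtwo}$-problem produces a factor $\mu_{\rmone}/\mu_{\rmtwo}$ that blows up as $\mu_{\rmtwo}\searrow0$. Using $\bm{p}_{\mu_{\rmtwo}}$ in the $\mu_{\rmone}$-problem gives an inequality of the wrong sign. Refining the partition, so that the local supergradient bound $\tfrac12\mu^2L_\psi^2$ is integrated in $s$, is what removes this difficulty.
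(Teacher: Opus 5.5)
Your proof is correct, and it supplies more than the paper does. The paper gives no argument for the two-parameter inequality \cref{eq:gap between two moreau envelopes}; it imports it from \cite[Eq. (21)]{kume2024variableLong}. Its only reasoning is the passage $\mu_{\rmtwo}\searrow 0$ via \cref{Properties of Moreau envelope}~(a), which your final step reproduces.

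Your telescoping in $s=\mu^{-1}$ is a self-contained replacement. It refines exactly the one-step test-point comparison you describe at the end. With the geometric partition $\sigma_j=s_{\rmone}r^{j}$, $r:=(\mu_{\rmone}/\mu_{\rmtwo})^{1/N}$, one has $(\sigma_j-\sigma_{j-1})/\sigma_{j-1}^2=r(\sigma_{j-1}^{-1}-\sigma_j^{-1})$. Hence your sum equals $\frac{r}{2}(\mu_{\rmone}-\mu_{\rmtwo})L_\psi^2$ exactly. Three consequences follow:
\begin{itemize}
\item $N=1$ gives the bound carrying the factor $\mu_{\rmone}/\mu_{\rmtwo}$;
\item any $N$ with $r\le 2$ already gives the paper's constant;
\item $N\to\infty$ gives your $\frac12$ without any appeal to Riemann sums.
\end{itemize}

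What your route buys is a sharper, in fact optimal, constant. For $\psi=L_\psi\abs{\cdot}$ the envelope is the Huber function, and $\mor{\psi}{\mu_{\rmtwo}}-\mor{\psi}{\mu_{\rmone}}=\frac12(\mu_{\rmone}-\mu_{\rmtwo})L_\psi^2$ for arguments of modulus at least $\mu_{\rmone}L_\psi$. Consequently the estimates in the proof of \cref{thm:convergence theorem} hold with $L_f^2/2$ in place of $L_f^2$. What the citation buys is only brevity.

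You were right to work with $\mu\in(0,\eta_\psi^{-1})$; the range $(0,\eta_\psi)$ in the statement is evidently a typo for the range in \cref{moreau envelope}. Your argument uses weak convexity only to stay within that definition. For Lipschitz $\psi$ the prox objective is coercive, and both your displacement bound and your test-point inequality hold for any minimizer. So the estimate is in fact valid for every $\mu>0$.
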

\begin{proof}[Proof of \cref{thm:convergence theorem}]
    This proof is inspired by those for \cite[Thm. 4.1]{bohm2021variable} and \cite[Thm. 4.8]{kume2024variableLong}.

    (a) Since the stepsize $\gamma_k$ output by \cref{backtracking} satisfies the Armijo condition \cref{eq:armijo condition}, we have
    \begin{equation} \label{eq:inequality about Fk xk+1 and Fk xk}
        F_k(\x_{k+1}) \le F_k(\x_k)-c \gamma_k \norm{\nb F_k(\x_k)}^2 \backin{k}{\N}.
    \end{equation}
    On the other hand, by virtue of \cref{eq:gap between two moreau envelopes} and $\mu_{k+1}\leq \mu_{k}$ (see \cref{eq:conditions of mu}(iii)), the following inequality holds for any $\x\in\R^d$:
    \begin{align} 
        &F_k(\x) = \mor{f}{\mu_k}(\frakS(\x)) -  \mor{g}{\mu_k}(\frakS(\x))\\
        &\ge \mor{f}{\mu_k}(\frakS(\x)) -  \mor{g}{\mu_{k+1}}(\frakS(\x))\\
        &\ge \mor{f}{\mu_{k+1}}(\frakS(\x))  - (\mu_k - \mu_{k+1}) L_f^2-  \mor{g}{\mu_{k+1}}(\frakS(\x))\\
        &= F_{k+1}(\x) - (\mu_k - \mu_{k+1}) L_f^2. \label{eq:inequality about Fk and Fk+1}
    \end{align}
    By combining  \cref{eq:inequality about Fk and Fk+1} with $\x:=\x_{k+1}$ and \cref{eq:inequality about Fk xk+1 and Fk xk}, we obtain
    \begin{align}\label{eq:inequality about Fk+1 xk+1 and Fk xk}
        \scalebox{0.95}{$F_{k+1}(\x_{k+1}) \le F_k(\x_k) -c \gamma_k \norm{\nb F_k(\x_k)}^2 + (\mu_k - \mu_{k+1}) L_f^2,$}
    \end{align}
    and thus,
    \begin{align}\label{eq:inequality about Fk+1 xk+1 and Fk xk without squared gradient}
        F_{k+1}(\x_{k+1}) \le F_k(\x_k) + (\mu_k - \mu_{k+1}) L_f^2.
    \end{align}
    By summing up \cref{eq:inequality about Fk+1 xk+1 and Fk xk} from $k = \underline{k}$ to $\bar{k}$, we deduce that
    \begin{equation}
        \eqsize{0.93}{\sum_{k=\underline{k}}^{\bar{k}} c \gamma_k \norm{\nb F_k(\x_k)}^2 \le F_{\underline{k}}(\x_{\underline{k}}) - F_{\bar{k}+1}(\x_{\bar{k}+1}) + (\mu_{\underline{k}} - \mu_{\bar{k}+1}) L_f^2.}
    \end{equation}
    We use \cref{eq:inequality about Fk+1 xk+1 and Fk xk without squared gradient} repeatedly to get
    \begin{equation}
        \eqsize{0.93}{\sum_{k=\underline{k}}^{\bar{k}} c \gamma_k \norm{\nb F_k(\x_k)}^2 \le F_1(\x_1) - F_{\bar{k}+1}(\x_{\bar{k}+1}) + (\mu_1 - \mu_{\bar{k}+1}) L_f^2.}
    \end{equation}
    Here, we see from \cref{eq:gap between moreau and original} in \cref{fac:properties of Moreau envelope of Lipschitzian} that
    \begin{align}
        F_{\bar{k}+1}(\x_{\bar{k}+1}) &= \mor{f}{\mu_{\bar{k}+1}}(\frakS(\x_{\bar{k}+1})) - \mor{g}{\mu_{\bar{k}+1}}(\frakS(\x_{\bar{k}+1}))\\
        &\ge f(\frakS(\x_{\bar{k}+1}))-\mu_{\bar{k}+1}L^2_f - g(\frakS(\x_{\bar{k}+1}))\\
        &= F(\x_{\bar{k}+1})- \mu_{\bar{k}+1}L^2_f,
    \end{align}
    by a similar discussion in \cref{eq:inequality about Fk and Fk+1}. Then, we have
    \begin{align}
        \sum_{k=\underline{k}}^{\bar{k}} c \gamma_k \norm{\nb F_k(\x_k)}^2 &\le F_1(\x_1) - F(\x_{\bar{k}+1}) + \mu_1 L_f^2\\
        &\le F_1(\x_1) - \inf_{\x\in\R^d}F(\x) + \mu_1 L_f^2< \infty, \label{eq:upper bound of gamma gradient sum}
    \end{align}
    where $\inf_{\x\in\R^d} F(\x) > -\infty$ holds by \cref{problem} (c).
    Now, from \cref{asm:initial guess of backtracking} and \cref{lem:finite termination of backtracking} (a), we can bound $\gamma_k$ from below, i.e., we have
    \begin{equation}
        \gamma_k \ge \bar{\delta}\kappa_{\mu_k}^{-1}:= \min \{\delta,2(1-c)\rho\}\kappa_{\mu_k}^{-1}.
    \end{equation}
    From this inequality, we obtain
    \begin{align}
        &\sum_{k=\underline{k}}^{\bar{k}} c \gamma_k \norm{\nb F_k(\x_k)}^2 \ge c\bar{\delta}\sum_{k=\underline{k}}^{\bar{k}}  \kappa^{-1}_{\mu_k}\norm{\nb F_k(\x_k)}^2\\
        &= c\bar{\delta}\sum_{k=\underline{k}}^{\bar{k}}  \frac{\mu_k}{\varpi_1\mu_k + \varpi_2}\norm{\nb F_k(\x_k)}^2\\
        &\ge c\bar{\delta}\sum_{k=\underline{k}}^{\bar{k}}  \frac{\mu_k}{\varpi_1(2\eta)^{-1} + \varpi_2}\norm{\nb F_k(\x_k)}^2\\
        &\ge \frac{2\eta c\bar{\delta}}{\varpi_1+2\eta\varpi_2}\min_{\underline{k}\le k \le \bar{k}}\norm{\nb F_k (\x_k)}^2 \sum_{k=\underline{k}}^{\bar{k}}\mu_k,
        \label{eq:lower bound of gamma gradient sum}
    \end{align}
    where we employed $\mu_k\le(2\eta)^{-1}$ in the second inequality.
    Hence, \cref{eq:lemma for convergence analysis} follows from \cref{eq:upper bound of gamma gradient sum} and \cref{eq:lower bound of gamma gradient sum} with
    \begin{equation}
        \eqsize{0.94}{C := \frac{\rbra{\varpi_1+2\eta\varpi_2}\rbra[\big]{F_1(\x_1) - \inf_{\x\in\R^d}F(\x) + \mu_1L_f^2}}{2\eta c\bar{\delta}}\in\R_{++}}.
    \end{equation}

    (b)
    From \cref{eq:conditions of mu} (ii), we can get the following by taking the limit as $\bar{k}\to\infty$ on the both sides of \cref{eq:lemma for convergence analysis}:
    \begin{equation}
        \inf_{\underline{k}\le k }\norm{\nb F_k(\x_k)} \le 0.
    \end{equation}
    Hence, \cref{eq:convergence theorem} is obtained by taking the limit as $\underline{k}\to\infty$.

    (c)
    From \cref{thm:convergence theorem} (b), we can construct $(\x_{m(l)})_{l=1}^\infty$ such that $\lim_{l\toinf}\norm{\nb F_{m(l)}(\x_{m(l)})} = 0$, e.g., in the same manner as in \cite[footnote 11]{kume2024variableLong}.
    \cref{thm:dc gradient sub-consistency} (b) leads to $\text{dist}\rbra[\Big]{\bm{0},\del_\mathrm{L}(f \circ \mathfrak{S})(\bar{\x})-\del_\mathrm{L}(g \circ \mathfrak{S})(\bar{\x})}=0$ for any cluster point $\bar{\x}$ of $(\x_{m(l)})_{l=1}^\infty$, which completes the proof.
\end{proof}